\numberwithin{equation}{section}
\newtheorem{theorem}{Theorem}
\newtheorem{lemma}{Lemma}
\newtheorem{proposition}{Proposition}
\newtheorem{definition}{Definition}
\newtheorem{conjecture}[equation]{Conjecture}
\newcommand{\RR}{\mathbb{R}}
\newcommand{\cS}{\mathcal{S}}
\newcommand{\cH}{\mathcal{H}}
\newcommand{\one}{\mathbf{1}}
\newcommand{\inprod}[1]{\langle #1 \rangle}
\DeclareMathOperator*{\E}{\mathbb{E}}
\let\Pr\relax
\DeclareMathOperator*{\Pr}{\mathbb{P}}
\DeclareMathOperator*{\argmax}{\mathrm{argmax}}
\DeclareMathOperator*{\argmin}{\mathrm{argmin}}
\newcommand{\HSBM}{\mathsf{HSBM}}
\newcommand\diag{\mathrm{diag}}
\newcommand\incircbin
\newcommand\@incircbin[2]
\newcommand{\oeq}{\incircbin{=}}
\begin{document}

\title{Stochastic Block Model for Hypergraphs: Statistical limits and a semidefinite programming approach}

\author{
Chiheon Kim\thanks{MIT, Department of Mathematics, 77 Massachusetts Ave., Cambridge, MA 02139. Partially supported by ONR Grant N00014-17-1-2177.}\\
\texttt{chiheonk@math.mit.edu}
%\IEEEauthorblockN{Chiheon Kim}
%\IEEEauthorblockA{MIT,\\
%Department of Mathematics,\\
%77 Massachusetts Ave,\\
%Cambridge, MA 02139 \\
%email: chiheonk@math.mit.edu
%}
\and
Afonso S. Bandeira\thanks{Department of Mathematics, Courant Institute of Mathematical Sciences and Center for Data Science, New York University, New York, NY 10012. Afonso S. Bandeira was partially supported by NSF grants DMS-1317308, DMS-1712730, and DMS-1719545. Part of this work was done while Afonso S. Bandeira was with the Mathematics Department at MIT.}\\
\texttt{bandeira@cims.nyu.edu}
%\IEEEauthorblockN{Afonso S. Bandeira${}^*$\thanks{${}^*$ Part of this work was done while A. S. Bandeira was with the Mathematics Department at MIT and supported by NSF Grant DMS-1317308.}}
%\IEEEauthorblockA{Department of Mathematics and \\
%Center for Data Science, \\
%Courant Institute of \\
%Mathematical Sciences, \\
%New York University, \\
%New York, NY 10012 \\
%email: bandeira@cims.nyu.edu
%}
\and
Michel X. Goemans\thanks{MIT, Department of Mathematics, Room 2-474, 77 Massachusetts Ave., Cambridge, MA 02139. Partially supported by ONR grants N00014-14-1-0072 and N00014-17-1-2177.}\\
\texttt{goemans@math.mit.edu}
%\IEEEauthorblockN{Michel X. Goemans ${}^\dagger$ \thanks{${}^\dagger$ Partially supported by ONR grants N00014-14-1-0072 and N00014-17-1-2177.}}
%\IEEEauthorblockA{
%MIT, \\
%Department of Mathematics, \\
%Room 2-474, \\
%77 Massachusetts Ave,\\
%Cambridge, MA 02139 \\
%email: goemans@math.mit.edu
%}
}

\maketitle

\begin{abstract}
We study the problem of community detection in a random hypergraph model which we call the stochastic block model for $k$-uniform hypergraphs ($k$-SBM). We investigate the exact recovery problem in $k$-SBM and show that a sharp phase transition occurs around a threshold: below the threshold it is impossible to recover the communities with non-vanishing probability, yet above the threshold there is an estimator which recovers the communities almost asymptotically surely. We also consider a simple, efficient algorithm for the exact recovery problem which is based on a semidefinite relaxation technique. 
\end{abstract}

\section{Introduction}

Identifying clusters from relational data is one of fundamental problems in computer science. It has many applications such as analyzing social networks \cite{newman2002random}, detecting protein-protein interactions \cite{marcotte1999detecting, chen2006detecting}, finding clusters in Hi-C genomic data \cite{cabreros2016detecting}, image segmentation \cite{shi2000normalized}, recommendation systems \cite{linden2003amazon,sahebi2011community} and many others. The goal is to find a community structure from relational measurements between data points.

Although many clustering problems are known to be NP-hard, typical data we encounter in applications are very different from the worst-case instances. This motivates us to study probabilistic models and average-case complexity for them. The stochastic block model (SBM) is one such model that has received much attention in the past few decades. In the SBM, we observe a random graph on the finite set of nodes where each pair of nodes is independently joined by an edge with probability only depending on the community membership of the endpoints. 

It is natural to consider the community detection problem for higher-order relations. A number of authors have already considered problems of learning from complex relational data \cite{agarwal2005beyond, govindu2005tensor, agarwal2006higher} and it has several applications such as folksonomy \cite{ghoshal2009random, zlatic2009hypergraph}, computer vision \cite{govindu2005tensor}, and network alignment problems for protein-protein interactions \cite{michoel2012alignment}. 

We consider a version of SBM for higher-order relations, which we call \emph{the stochastic block model for $k$-uniform hypergraph} ($k$-HSBM): we observe a random $k$-uniform hypergraph such that each set of nodes of size $k$ appears independently as an (hyper-)edge with probability only depending on the community labels of nodes in it. $k$-HSBM was first introduced in \cite{ghoshdastidar2014consistency} and investigated for its statistical limit in terms of detection \cite{lesieur2017statistical}, the minimax misclassification ratio \cite{lin2017fundamental,chien2018minimax}, and as a testbed for algorithms including naive spectral method \cite{ghoshdastidar2015provable,ghoshdastidar2015spectral,ghoshdastidar2017consistency}, spectral method along with local refinements \cite{abbe2016community,chien2018minimax,ahn2018hypergraph} and approximate-message passing algorithms \cite{angelini2015spectral,lesieur2017statistical}.

We focus on exact recovery, where our goal is to fully recover the community labels of the nodes from a random $k$-uniform hypergraph drawn from the model. For exact recovery, the maximum a posteriori (MAP) estimator always outperforms any other estimators in the sense that it has the highest probability of correctly recovering the solution. We prove that for the $k$-HSBM with two equal-sized and symmetric communities, exact recovery shows a sharp phase transition behavior, and moreover, the threshold can be characterized by the success of a certain type of local refinement. This type of phenomenon was mentioned as ``local-to-global amplification'' in \cite{abbe2016community}, and was proved in \cite{abbe2016exact} for the usual SBM with two symmetric communities (corresponds to $2$-HSBM) and more generally in \cite{abbe2015community} for SBMs with fixed number of communities. Our result can be regarded as a direct generalization of \cite{abbe2015community} to $k$-uniform hypergraphs.

Furthermore, we analyze a certain convex relaxation technique for the $k$-HSBM. %In the case of symmetric SBMs with symmetric, bounded number of communities, it is known that the direct semidefinite relaxation of the MAP estimator succeeds to recover the correct labels as long as it is statistically possible, implying no computational versus statistical gap in this problem. 
We consider an algorithm which uses a semidefinite relaxation, based on the ``truncate-and-relax'' idea in our previous work \cite{kim2017community}. We prove that our algorithm guarantees exact recovery with high probability in a parameter regime which is orderwise optimal.

We remark that in \cite{abbe2016community} it was suggested that the local refinement methods together with an efficient partial recovery algorithm would imply the efficient exact recovery up to the information-theoretic threshold. An explicit algorithm exploiting this idea appears in \cite{chien2018minimax,ahn2018hypergraph} with a provable threshold for their algorithm to be successful. We note that the threshold of the algorithm of \cite{chien2018minimax} matches with the statistical threshold we derive, hence there is no gap between statistical and computational thresholds. On the other hand, we prove that our SDP-based algorithm does not achieve the statistical threshold when $k\geq 4$.

\subsection{The Stochastic Block Model for graphs: An overview}

Before we discuss the main topic of the paper, we start by discussing the usual stochastic block model to motivate our work.

The stochastic block model (SBM) has been one of the most fruitful research topics in community detection and clustering. One benefit of it is that, being a generative model we can formally study the probability of inferring the ground truth. While data from the real-world can behave differently, the SBM is believed to provide good insights in the field of community detection and has been studied for its sharp phase transition behavior \cite{mossel2013proof,abbe2015community,abbe2016exact}, computational vs. information-theoretic gaps \cite{chen2016statistical,abbe2015recovering}, and as a test bed for various algorithms such as spectral methods \cite{massoulie2014community,Vu2014}, semidefinite programs \cite{abbe2016exact,hajek2016achieving,javanmard2016phase}, belief-propagation methods \cite{decelle2011asymptotic,abbe2015detection,abbe2016achieving}, and approximate message-passing algorithms \cite{ver2014phase,cai2016inference,deshpande2016asymptotic,lesieur2017constrained}. We recommend \cite{abbe2016community} for a survey of this topic.

For the sake of exposition, let us consider the symmetric SBM with two equal-sized clusters, also known as the planted bisection model. Let $n$ be a positive integer, and let $p$ and $q$ be real numbers in $[0,1]$. The planted bisection model with parameter $n$, $p$ and $q$ is a generative model which outputs a random graph $G$ on $n$ vertices such that (i) the bipartition $(A,B)$ of $V$ defining two equal-sized clusters is chosen uniformly at random, and (ii) each pair $\{u,v\}$ in $V$ is connected independently with probability $p$ if $u$ and $v$ are in the same cluster, or probability $q$ otherwise. Note that this model coincides with Erd\H{o}s-R\'{e}nyi random graph model $\mathcal{G}(n,p)$, when $p$ and $q$ are equal. 

The goal is to find the ground truth $(A,B)$ either approximately or exactly, given a sampled graph $G$. We may ask the following questions regarding the quality of the solution.
\begin{itemize}
\item (Exact recovery) When can we find $(A,B)$ exactly (up to symmetry) with high probability?
\item (Almost exact recovery) Can we find a bipartition such that the vanishing portion of the vertices are mislabeled?
\item (Detection) Can we find a bipartition such that the portion of mislabeled vertices is less than $\frac{1}{2}-\epsilon$ for some positive constant $\epsilon$?
\end{itemize}
There are a number of works regarding these questions in the algorithmic point of view or in the sense of statistical achievability. The following is a short list of the states-of-the-art works regarding the model:
\begin{itemize}
\item Suppose that $p = \frac{a \log n}{n}$ and $q = \frac{b \log n}{n}$ where $a$ and $b$ are positive constants not depending on $n$. Then, exact recovery is possible if and only if $(\sqrt{a}-\sqrt{b})^2 > 2$. Moreover, there are efficient algorithms which achieves the information-theoretic threshold \cite{abbe2016exact,hajek2016achieving}. 
\item Suppose that $p = \frac{a}{n}$ and $q = \frac{b}{n}$ where $a$ and $b$ are positive constants not depending on $n$. Then, the detection is possible if and only if $\frac{(a-b)^2}{2(a+b)} > 1$. Moreover, there are efficient algorithms achieving the IT threshold \cite{mossel2012stochastic,mossel2013proof,massoulie2014community}.
\end{itemize} 
We further note that those sharp phase transition behaviours and algorithms achieving the threshold are found for general stochastic block models \cite{abbe2015recovering, abbe2016community}, \cite{lesieur2017statistical,lesieur2017constrained}. This paper focuses on exact recovery.

\subsection{The Stochastic Block Model for hypergraphs}

The stochastic block model for hypergraphs (HSBM) is a natural generalization of the SBM for graphs which was first introduced in \cite{ghoshdastidar2014consistency}. Informally, the HSBM can be thought as a generative model which returns a hypergraph with unknown clusters, and each hyperedge appears in the hypergraph independently with the probability depending on the community labels of the vertices involved in the hyperedge.

In \cite{ghoshdastidar2014consistency}, the authors consider the HSBM under the setting that the hypergraph generated by the model is $k$-uniform and dense. They consider a spectral algorithm on a version of hypergraph Laplacian, and prove that the algorithm exactly recovers the partition for $k > 3$ with probability $1-o_n(1)$. Subsequently, the same authors extended their results to sparse, non-uniform (but bounded order) setting, studying partial recovery \cite{ghoshdastidar2015spectral,ghoshdastidar2015provable,ghoshdastidar2017consistency}. 

We note that sparsity is an important factor to address in recovery problems of different types: exact recovery, almost exact recovery, and detection. In the case of the SBM for graphs, we recall that the average degree must be $\Omega(\log n)$ to assure exact recovery, and the average degree must be $\Omega(1)$ to assure detection. Conversely, the point of the sharp phase transition lies exactly in those regimes. We may expect similar behaviour for the $k$-uniform HSBM. For exact recovery, it was confirmed that the phase transition occurs in the regime of logarithmic average degree, by analyzing the optimal minimax risk of $k$-uniform HSBM \cite{lin2017fundamental,chien2018minimax}. For detection, the phase transition occurs in the regime of constant average degree \cite{angelini2015spectral}. The authors of \cite{angelini2015spectral} proposed a conjecture specifying the exact threshold point, based on the performance of belief-propagation algorithm. Also, such results for the weighted HSBM were independently proved in \cite{ahn2018hypergraph} and a exact threshold of the censored block model for uniform hypergraphs was classified in \cite{ahn2016community}.

In this paper, we consider a specific $k$-uniform HSBM with two equal-sized clusters. Let us remark that in the SBM case, we had two parameters $p$ and $q$ where the probability that an edge $\{i,j\}$ appears in the graph is $p$ or $q$ depending on whether $i$ and $j$ are in the same cluster or not. For an hyperedge of size greater than 2, there are different ways to generalize this notion, but we will focus on a simple model that the probability that a set $e$ of size $k$ appears as a hyperedge depends on whether $e$ is completely contained in a cluster or not. 

Let $n$ be a positive even number and let $V = [n]$ be the set of vertices of the hypergraph $\cH$. Let $k \geq 2$ be an integer. Let $p$ and $q$ be numbers between 0 and 1, possibly depending on $n$. We denote the collection of size $k$ subsets of $V$ by $\binom{V}{k}$. The $k$-HSBM with parameters $k$, $n$, $p$ and $q$, denoted $\HSBM(n,p,q;k)$, is a model which samples a $k$-uniform hypergraph $\cH$ on the vertex set $V$ according to following rules.
\begin{itemize}
\item $\sigma$ is a vector in $\{\pm 1\}^V$ chosen uniformly at random, among those with the equal number of $-1$'s and $1$'s. We may think $-1$ and $1$ as community labels.
\item Each $e = \{e_1,\cdots,e_k\}$ in $\binom{V}{k}$ appears independently as an hyperedge with probability
$$
\Pr(e \in E(\cH)) = \begin{cases} p &\text{if $\sigma_{e_1}=\sigma_{e_2}=\cdots=\sigma_{e_k}$} \\
q & \text{otherwise}. \end{cases}
$$
We say $e$ is \emph{in-cluster} with respect to $\sigma$ for the first case, and \emph{cross-cluster} w.r.t. $\sigma$ for the other case.
\end{itemize}

Our goal is to find the clusters from a given hypergraph $\cH$ generated from the model. We specially focus on exact recovery, formally defined as follows.

\begin{definition}
We say exact recovery in $\HSBM(n,p,q;k)$ is possible if there exists an estimator $\widehat{\sigma}$ which only fails to recover $\sigma$ up to a global sign flip with vanishing probability, i.e.,
$$
\Pr_{(\sigma,\cH) \sim \HSBM(n,p,q;k)}(\widehat{\sigma}(\cH) \not\in \{\sigma,-\sigma\}) = o_n(1).
$$
On the other hand, we say exact recovery in $\HSBM(n,p,q;k)$ is impossible if any estimator $\widehat{\sigma}$ fails to recover $\sigma$ up to a global sign flip with probability $1-o_n(1)$, i.e.,
$$
\Pr_{(\sigma,\cH) \sim \HSBM(n,p,q;k)}(\widehat{\sigma}(\cH) \not\in \{\sigma,-\sigma\}) = 1-o_n(1) \text{ for any $\widehat{\sigma}$}.
$$
\end{definition}

We remark that $\cH$ must be connected for exact recovery to be successful. In Erd\H{o}s-R\'{e}nyi (ER) model for random hypergraphs, it is known that a random hypergraph from the ER model is connected with high probability only if the expected average degree is at least $\frac{c(k-1)\log n}{\binom{n-1}{k-1}}$ for some $c > 1$\footnote{The proof for this result is a direct adaptation of the proof in \cite{bollobas1998random} for $k=2$, i.e., random graph model. See \cite{coja2007counting,behrisch2010order,cooley2015evolution} for phase transitions regarding giant components, which justifies the regime for partial recovery and detection.}. Together with the works in \cite{ahn2018hypergraph} and \cite{chien2018minimax}, this motivates us to work on the parameter regime where 
$$
p = \frac{\alpha\log n}{\binom{n-1}{k-1}} \quad \text{and} \quad q = \frac{\beta \log n}{\binom{n-1}{k-1}}
$$
for some constant $\alpha$ and $\beta$.

\subsection{Main results}

We first establish a sharp phase transition behaviour for exact recovery in the stochastic block model for $k$-uniform 
hypergraphs. We will assume that the parameter $k$ is a fixed positive integer not depending on $n$, and edge probabilities decay as
$$
p = \frac{\alpha \log n}{\binom{n-1}{k-1}} \quad \text{and} \quad q = \frac{\beta \log n}{\binom{n-1}{k-1}}
$$
where $\alpha$ and $\beta$ are fixed positive constants. Asymptotics in this paper are based on $n$ growing to infinity, unless noted otherwise. 

\begin{theorem}
\label{thm:A}
Exact recovery in $\HSBM(n,p,q;k)$ is possible if $I(\alpha,\beta)>1$, and impossible if $I(\alpha,\beta)<1$ where
$I(\alpha,\beta) = \frac{1}{2^{k-1}}(\sqrt{\alpha}-\sqrt{\beta})^2$. 
\end{theorem}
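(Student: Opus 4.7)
I plan to follow the local-to-global amplification paradigm used by Abbe, Bandeira, and Hall for the two-community SBM. The central object is the single-vertex genie test: for a vertex $v$ in the planted cluster $A = \sigma^{-1}(+1)$, define
$Z_v^+ = |\{e \in E(\cH) : v \in e,\ e \subseteq A\}|$ and
$Z_v^- = |\{e \in E(\cH) : v \in e,\ e \setminus \{v\} \subseteq B\}|$.
Under $\HSBM(n,p,q;k)$, $Z_v^+$ and $Z_v^-$ are independent binomials with parameters $(\binom{n/2-1}{k-1},p)$ and $(\binom{n/2}{k-1},q)$. Since $\binom{n/2}{k-1}/\binom{n-1}{k-1} = 2^{-(k-1)}(1+o(1))$, their means are $(1+o(1))\cdot\frac{\alpha}{2^{k-1}}\log n$ and $(1+o(1))\cdot\frac{\beta}{2^{k-1}}\log n$ respectively. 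A direct Chernoff computation (equivalently, Poisson approximation with the Cram\'er rate $(\sqrt{c_1}-\sqrt{c_2})^2$ for the event $\{X \leq Y\}$ with $X,Y$ independent Poisson of means $c_i \log n$) gives the key local estimate
$$\Pr\bigl(Z_v^+ \leq Z_v^-\bigr) = n^{-I(\alpha,\beta)(1+o(1))}.$$

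\textbf{Achievability ($I(\alpha,\beta)>1$).} Assume WLOG $\alpha > \beta$, so $\log\frac{p(1-q)}{q(1-p)}>0$ and the MAP estimator reduces to $\widehat\sigma \in \argmax_{\tau\text{ balanced}} m_{\text{in}}(\tau)$, where $m_{\text{in}}(\tau)$ is the number of hyperedges of $\cH$ entirely inside one side of the bipartition $\tau$. I would proceed in two stages. First, an \emph{almost-exact recovery} step: a union bound over balanced bipartitions combined with a Bernstein-type tail bound (or equivalently, an invocation of the partial-recovery guarantee provided by the SDP analysis developed later in the paper) shows that any MAP optimizer agrees with $\sigma$ on all but $o(n)$ vertices w.h.p. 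Second, a \emph{local refinement} step: on the good event $\mathcal{E} = \{Z_v^+ > Z_v^-\text{ for all }v\}$, the difference $m_{\text{in}}(\sigma)-m_{\text{in}}(\tau)$ for a bipartition $\tau$ differing from $\sigma$ on a symmetric-difference set $S$ expands as $\sum_{v \in S}(Z_v^+ - Z_v^-)$ plus higher-order interaction terms that are negligible when $|S|=o(n)$ with high probability. A union bound over $v$ then yields $\Pr(\mathcal{E}^c) \leq n\cdot n^{-I(\alpha,\beta)(1+o(1))} = o(1)$ when $I(\alpha,\beta)>1$, so MAP succeeds.

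\textbf{Impossibility ($I(\alpha,\beta)<1$).} Since MAP is Bayes-optimal, it suffices to show MAP fails w.h.p. The strategy is to produce, with probability $1-o(1)$, a pair $v \in A$, $v' \in B$ such that the bipartition $\sigma'$ obtained by swapping the labels of $v$ and $v'$ satisfies $m_{\text{in}}(\sigma')>m_{\text{in}}(\sigma)$, which implies $\Pr(\sigma \mid \cH) < \Pr(\sigma' \mid \cH)$ and hence MAP cannot output $\pm\sigma$. Ignoring hyperedges containing both $v$ and $v'$ (there are at most $2\binom{n-2}{k-2}$ such potential edges, whose joint contribution is $o(1)$ in expectation), the swap gain equals $(Z_v^- - Z_v^+) + (Z_{v'}^- - Z_{v'}^+)$, so it suffices to find one ``bad'' vertex (one with $Z_v^+ < Z_v^-$) in each cluster. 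The expected number of bad vertices in $A$ is at least $\tfrac{n}{2}\cdot n^{-I(\alpha,\beta)(1+o(1))}\to\infty$, and since $Z_v^\pm$ and $Z_w^\pm$ for distinct $v,w$ share only $O(n^{k-2})$ potential hyperedges out of the $\Theta(n^{k-1})$ relevant to each, a second-moment computation confirms the existence of such a vertex in each cluster w.h.p.

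\textbf{Main obstacle.} The delicate point is the reduction from global optimality to the single-vertex test. For achievability, almost-exact recovery must be secured as a prerequisite before the local test bites, and the swap expansion carries intricate $k$-wise interaction terms absent in the $k=2$ case that must be shown to be subdominant. For impossibility, the second-moment argument must carefully track the weak dependencies between $Z_v^\pm$ and $Z_w^\pm$ induced by shared hyperedges in the $k$-uniform setting, and one must verify that the ``bad vertex'' witness translates into a genuine balanced swap that strictly increases the likelihood rather than a single flip violating the balance constraint. By comparison, the sharp large-deviation constant $I(\alpha,\beta) = 2^{-(k-1)}(\sqrt\alpha-\sqrt\beta)^2$ itself follows routinely from Poisson approximation to the two binomials.
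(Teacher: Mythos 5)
Your identification of the genie test $Z_v^{\pm}$, the local exponent $\Pr(Z_v^+ \le Z_v^-) = n^{-I(\alpha,\beta)+o(1)}$, and the swap-a-bad-pair mechanism for impossibility all match the paper. The differences and the gaps are the following.

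For impossibility, the paper does not run a second-moment argument. It decouples the events $\{Z_v^+ \le Z_v^-\}$ by restricting to a vertex subset $U$ of size $n/\log^3 n$, conditioning on the high-probability event that no vertex of $U$ lies in more than $\delta\log n$ hyperedges containing two or more vertices of $U$, and replacing each event by a slightly stronger event $E_v'$ that depends only on hyperedges $e$ with $e\cap U=\{v\}$; these are genuinely independent, so the product formula is exact and no moment computation is needed. Your second-moment route is plausible (the shared hyperedges between $Z_v^{\pm}$ and $Z_w^{\pm}$ do have $o(1)$ expected mass), but it is exactly the step you have not carried out, and making $\Pr(E_v\cap E_w)\le(1+o(1))\Pr(E_v)\Pr(E_w)$ rigorous essentially forces a conditioning device of the paper's kind. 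So: same strategy, with the actual decoupling mechanism missing.

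For achievability the paper takes a genuinely different and shorter route: a single union bound over all balanced $x$ at Hamming distance $d$ from $\sigma$, for every $d$ from $2$ to $n/2$, pitting the tail bound $p_{fail}^{(d)} \le \exp(-(1-o(1))\,I(\alpha,\beta)\,d\log n)$ against the entropy factor $\binom{n/2}{d/2}^2 \le n^d$; no two-stage decomposition is needed. Your two-stage plan has a concrete gap: on the event $\mathcal{E}$ you only know $\sum_{v\in S}(Z_v^+ - Z_v^-) \ge |S|$, while the interaction correction (hyperedges meeting $S$ in at least two vertices) has expected size of order $|S|^2\log n / n$, which dominates $|S|$ as soon as $|S| \gg n/\log n$. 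Hence almost-exact recovery with $o(n)$ errors is not sufficient for the refinement step; you need $o(n/\log n)$ errors together with a concentration statement uniform over all such $S$. Moreover, the proposed fallback of invoking ``the partial-recovery guarantee provided by the SDP analysis'' is invalid: that analysis yields exact recovery only under $I_{sdp}(\alpha,\beta)>1$, which does not cover the whole region $I(\alpha,\beta)>1$ (indeed the paper shows the SDP-based estimator provably fails in part of that region for $k\ge 4$).
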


In case of exact recovery, the maximum a posteriori (MAP) estimator achieves the minimum error probability. The MAP estimator corresponds to the maximum-likelihood (ML) estimator in this model since the partition is chosen from a uniform distribution. Hence, it is sufficent to analyze the performance of the ML estimator to prove Theorem \ref{thm:A}.   

On the other hand, we ask whether there exists an efficient algorithm which recover the hidden partition $\sigma$ achieving the information-theoretic threshold. Note that the ML estimator (which achieves the minimum error probability) is given by
$$
\widehat{\sigma}_{MLE}(H) = \argmax_{x \in \{\pm 1\}^V: \one^T x = 0} \Pr_{(\sigma,\cH) \sim \HSBM(n,p,q;k)}(\cH=H|\sigma=x).
$$
This is in general hard to compute. For example, when $k=2$ and $p > q$, it reduces to find a balanced bipartition with the minimum number of edges crossing given a graph $G$, also known as MIN-BISECTION problem which is NP-hard. However, there is a simple and efficient algorithm which works up to the threshold of the ML estimator in case of $k=2$. This algorithm is based on a standard semidefinite relaxation of MIN-BISECTION \cite{goemans1995improved}.

For general $k$-HSBM, we propose an efficient algorithm using a ``truncate-and-relax'' strategy. Given a $k$-uniform hypergraph $H$ on the vertex set $V$, let us define a weighted graph $(G_H, w)$ on the same vertex set where the weights are given by
$$
w(ij) = \#(e \in E(H): \{i,j\} \subseteq e)
$$
for each $\{i,j\} \in \binom{V}{2}$. Let $\widehat{\sigma}_{trunc}$ be an optimal solution of 
$$
\text{maximize} \sum_{ij \in \binom{V}{2}} w(ij)x_i x_j \quad \text{subject to} \quad x \in \{\pm 1\}^V \text{ and } \one^T x = 0,
$$
which is equivalent to finding the min-bisection of the weighted graph $(G_H, w)$. Now, consider the following semidefinite program:
\begin{equation}
\label{eqn:sdp}
\begin{aligned}
\text{maximize} & & & \sum_{ij \in \binom{V}{2}} w(ij) X_{ij} \\
\text{subject to} & & & \sum_{i,j \in V} X_{ij} = 0, \\
& & & X_{ii} = 1 \text{ for all $i \in V$,} \\
& & & X=X^T \succeq 0.
\end{aligned}
\end{equation}
This program is a relaxation of the min-bisection problem above, since for any feasible $x$ in the original problem corresponds to a feasible solution $X = xx^T$ in the relaxed problem.

The ML estimator attempts to maximize the function
$$
f_H(x) = \log \Pr_{(\sigma,\cH)\sim \HSBM(n,p,q;k)}(\cH = H| \sigma=x)
$$
over the vectors in the hypercube $\{\pm 1\}^V$ with equal number of $-1$'s and $1$'s. We can write $f_H(x)$ as a multilinear polynomial in $x$, since $x_i^2 = 1$ for all $i$. Let $f_H^{(2)}(x)$ be the quadratic part of $f_H(x)$. Then,
maximizing $f_H^{(2)}(x)$ is equivalent to find the min-bisection of $(G_H, w)$. This justifies our term truncate-and-relax, as in our previous work \cite{kim2017community}.

Now, let $\widehat{\Sigma}(H)$ be the solution of (\ref{eqn:sdp}). We prove that this estimator correctly recovers the hidden partition with high probability up to a threshold which is order-wise optimal.

\begin{theorem}
\label{thm:B}
Suppose $\alpha > \beta$. Then $\widehat{\Sigma}(\cH)$ is equal to $\sigma \sigma^T$ with probability $1-o_n(1)$ if $I_{sdp}(\alpha,\beta)>1$ where
$$
I_{sdp}(\alpha,\beta) = \frac{k-1}{2^{2k}} \cdot \frac{(\alpha-\beta)^2}{\left(\frac{k}{2^{k}} \alpha+\left(1-\frac{k}{2^k}\right)\beta\right)}.
$$
\end{theorem}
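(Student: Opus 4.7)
The plan is the standard SDP dual-certificate approach: exhibit a symmetric PSD matrix $S$ of rank $n-1$ with $\sigma$ spanning its kernel, dual-feasible for (\ref{eqn:sdp}), to certify that $\widehat{\Sigma}(\cH)=\sigma\sigma^T$ is the unique optimum with high probability. The natural choice is $S := D + \nu J - W$, where $W = W(\cH)$ is the pairwise-weight matrix from the statement, $\nu \in \RR$ is a scalar Lagrange multiplier for the constraint $\sum_{ij}X_{ij}=0$, and $D$ is the diagonal matrix with entries $D_{ii} = \sigma_i (W\sigma)_i$. Then $(D-W)\sigma = 0$ by construction, $J\sigma = 0$ (as $\sigma$ is balanced), so $S\sigma = 0$ and complementary slackness $\la S, \sigma\sigma^T\ra = 0$ hold automatically. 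By standard SDP duality, once $S \succeq 0$ is established it suffices to show $v^T S v > 0$ for every unit $v \perp \sigma$ to conclude that $\sigma\sigma^T$ is the unique optimizer.

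Decompose $v = c\one/\sqrt n + u$ with $u \perp \sigma, \one$. For $|c|$ bounded below, pick $\nu = \Theta(\log n)$ so that $\nu n c^2$ dominates the other contributions to $v^T S v$, each of which is $O(\log n)$ by a coarse estimate on $\|D\|_{op}, \|W\|_{op}$. In the complementary regime $|c| = o(1)$, a perturbation-plus-AM-GM argument reduces the task to the eigenvalue inequality
$$
\inf_{\substack{u \perp \sigma, \one \\ \|u\| = 1}} u^T(D - W) u > 0.
$$
Since $\E[W]$ has the block form $\bar w(J-I) + \Delta(\sigma\sigma^T - I)$, where $\bar w$ and $\Delta$ are the half-sum and half-difference of $\E[w(ij)]_{\text{same}}$ and $\E[w(ij)]_{\text{diff}}$, its restriction to $\{\sigma,\one\}^\perp$ is $-\E[w]_{\text{same}}\cdot I$. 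The eigenvalue inequality therefore reduces further to the scalar condition
$$
d_{\min} + \E[w]_{\text{same}} > \|W - \E W\|_{op}, \qquad d_{\min} := \min_i D_{ii}.
$$

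The two probabilistic estimates needed are: (i) concentration of $d_{\min}$---each $D_{ii} = \sum_{e\ni i}\one_{e\in E(\cH)}\,\sigma_i\sum_{j\in e, j\neq i}\sigma_j$ is a sum of independent bounded random variables indexed by hyperedges through $i$, so Bernstein's inequality plus a union bound over vertices gives $d_{\min}\geq \E[D_{ii}] - o(\log n)$, with $\E[D_{ii}] \sim \tfrac{(k-1)(\alpha-\beta)}{2^{k-1}}\log n$; and (ii) a matrix Bernstein bound on $\|W - \E W\|_{op}$ applied to the decomposition $W - \E W = \sum_e (\one_{e\in E(\cH)} - p_e)(1_e 1_e^T - \diag(1_e))$, where the identity $(1_e 1_e^T - \diag(1_e))^2 = (k-2) 1_e 1_e^T + \diag(1_e)$ renders the variance proxy explicitly computable. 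Plugging the resulting signal and noise estimates into the scalar inequality and squaring produces the condition $I_{\text{sdp}}(\alpha,\beta) > 1$.

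The main difficulty I expect is step (ii): the hyperedge structure couples the entries of $W$, so the spectral norm of the variance proxy $\sum_e p_e(1-p_e)((k-2)1_e 1_e^T + \diag(1_e))$ must be tracked carefully---beyond a naive row-sum upper bound---in order to recover the precise denominator $\tfrac{k}{2^k}\alpha + (1 - \tfrac{k}{2^k})\beta$ in $I_{\text{sdp}}$. A subtler secondary point is the passage between the two $|c|$-regimes: the cross term $2c\,\one^T(D-W)u/\sqrt n$ is not obviously negligible when $|c|$ is small but nonzero, and must be absorbed into $\nu n c^2$ at the cost of only a lower-order slack in the scalar threshold.
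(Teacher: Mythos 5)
Your certificate is the paper's certificate: $S=D+\nu\one\one^T-W$ with $D_{ii}=\sigma_i(W\sigma)_i$ is, after conjugation by $\diag(\sigma)$, exactly the matrix $L_\Gamma+\lambda\sigma\sigma^T$ the paper analyzes, and the reduction to positivity of $u^T(D-W)u$ on $\{\one,\sigma\}^\perp$ is sound. The gap is in the probabilistic estimates, and it is not merely technical. First, the claim $d_{\min}\geq \E D_{ii}-o(\log n)$ is false. Each $D_{ii}$ is a sum of independent bounded terms with mean \emph{and variance} both $\Theta(\log n)$, so Bernstein plus a union bound over $n$ vertices forces a deviation allowance of order $\sqrt{2\,\mathrm{Var}(D_{ii})\cdot\log n}=\Theta(\log n)$; indeed, the $\Theta(\log n)$-scale fluctuation of $\min_i D_{ii}$ is precisely the mechanism behind the information-theoretic threshold of Theorem 1, so it cannot be wished away. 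With the correct allowance your ``signal'' term is $\E D_{ii}-\sqrt{2V_1}\log n$ for an explicit constant $V_1>0$, not $\E D_{ii}-o(\log n)$.

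Second, even after that correction, the decomposition ``diagonal via $d_{\min}$, off-diagonal via $\|W-\E W\|$'' structurally cannot reach the stated constant. Writing $V_1\log n$ for the variance of $D_{ii}$ (per direction) and $V_2\log n$ for the projected variance proxy of $W-\E W$, your route yields the sufficient condition $\tfrac{k-1}{2^{k-1}}(\alpha-\beta)>\sqrt{2(k-1)V_1}+\sqrt{2(k-1)V_2}$, whereas the paper applies matrix Bernstein once to the full projected Laplacian fluctuation $\Pi(L_\Gamma-\E L_\Gamma)\Pi$ (diagonal and off-diagonal together), whose variance proxy is exactly $V_1+V_2$ --- this is the quantity $c_2=2(k-1)\bigl(\tfrac{k}{2^k}\alpha+(1-\tfrac{k}{2^k})\beta\bigr)\log n$ of Lemma 3 --- giving the condition $\tfrac{k-1}{2^{k-1}}(\alpha-\beta)>\sqrt{2(k-1)(V_1+V_2)}$, which squares to $I_{sdp}(\alpha,\beta)>1$. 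Since $\sqrt{V_1}+\sqrt{V_2}>\sqrt{V_1+V_2}$ whenever both are positive, adding the two worst cases proves the theorem only on a strictly smaller parameter region. A related point you partially anticipated: the unprojected variance proxy $\sum_e p_e\bigl((k-2)1_e1_e^T+\diag(1_e)\bigr)$ has norm about $(k-1)^2\bar d$ (attained near the $\one$ direction) versus $(k-1)\bar d$ after projecting onto $\{\one,\sigma\}^\perp$, so the projection must be done \emph{before} invoking matrix Bernstein. The fix is to keep $D-\E D$ and $W-\E W$ together as the single sum $\sum_e((A_\cH)_e-\E(A_\cH)_e)\bigl((\one_e^T\sigma_e)\diag(\sigma_e)-\sigma_e\sigma_e^T\bigr)$, project, and apply matrix Bernstein once.
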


It is natural to ask whether this analysis is tight. The proof proceeds by constructing a dual solution which certifies that $\sigma\sigma^T$ is the unique optimum of (\ref{eqn:sdp}) with high probability. Following \cite{Bandeira2016laplacian}, the dual solution (if exists) is completely determined by $(G_H, w)$ which has the form of a ``Laplacian'' matrix. Precisely, the major part of the proof is devoted to prove that the matrix $L$ of size $V \times V$ with entries
$$
L_{ij} = \begin{cases}
-w(ij)\sigma_i \sigma_j & \text{if $i \neq j$} \\
\sum_{i' \in V\setminus \{i\}} w(ii')\sigma_i\sigma_{i'} & \text{if $i=j$},
\end{cases}
$$
is positive-semidefinite with high probability. We use the Matrix Bernstein inequality to prove that the fluctuation $\|L - \E L\|$ is smaller compared to the minimum eigenvalue of $\E L$ w.h.p., under the assumption $I_{sdp}(\alpha,\beta) > 1$. However, we believe that it can be improved by a direct analysis of $\|L - \E L\|$. Numerical simulations and discussions which supports our belief can be found in Section \ref{sec:conjecture}.

Finally, we complement Theorem \ref{thm:B} by providing a lower bound of the truncate-and-relax algorithm. Recall that the algorithm tries to find a solution in the relaxed problem (\ref{eqn:sdp}). It implies that if the min-bisection of $(G_H, w)$ is not the correct partition $\sigma$, then the truncate-and-relax algorithm will also return a solution which is not equal to $\sigma\sigma^T$. Hence, we have
$$
\Pr(\widehat{\Sigma}(\cH) \neq \sigma\sigma^T) \geq \Pr(\widehat{\sigma}_{trunc}(\cH) \not\in \{\sigma,-\sigma\}).
$$
We find a sharp threshold for the estimator $\widehat{\sigma}_{trunc}(\cH)$ recovering $\sigma$ or $-\sigma$ successfully.

\begin{theorem}
\label{thm:C}
Suppose $\alpha > \beta$. Let $I_2(\alpha,\beta)$ be defined as following:
$$
I_2(\alpha,\beta) = \max_{t\geq 0} 
\frac{1}{2^{k-1}}\left[ \alpha (1-e^{-(k-1)t}) + \sum_{a=1}^{k-1} \beta \binom{k-1}{a} \left(1-e^{-(k-1-2a)t}\right)\right]
$$
%\frac{\alpha-\beta}{2^{k-1}} (1-e^{-(k-1)t}) + \beta\left(1-\left(\frac{e^{t}+e^{-t}}{2}\right)^{k-1}\right).
If $I_2(\alpha,\beta) < 1$, then $\widehat{\sigma}_{trunc}(\cH)$ is not equal to neither $\sigma$ nor $-\sigma$ with probability $1-o_n(1)$. On the other hand, if $I_2(\alpha,\beta) > 1$, then $\widehat{\sigma}_{trunc}(\cH)$ is either of $\sigma$ or $-\sigma$ with probability $1-o_n(1)$.
\end{theorem}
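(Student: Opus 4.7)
The plan is to reduce the event $\widehat{\sigma}_{trunc}\in\{\pm\sigma\}$ to a single-swap optimality condition on $(A,B)$ and then control that condition by Chernoff bounds, amplifying in both directions. For cross-cluster $i\in A$ and $j\in B$, set
\[
Z_{ij} \;=\; \sum_{\ell\neq i} w(i\ell)\sigma_i\sigma_\ell \;+\; \sum_{\ell\neq j} w(j\ell)\sigma_j\sigma_\ell \;+\; 2w(ij).
\]
A direct calculation shows that the weighted bisection cut changes by exactly $Z_{ij}$ when $i$ and $j$ are exchanged, so $\widehat{\sigma}_{trunc}\in\{\pm\sigma\}$ requires $Z_{ij}>0$ for every cross-cluster pair. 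A useful cancellation: every hyperedge containing both $i$ and $j$ contributes zero to $Z_{ij}$ (the $+2$ from $2w(ij)$ absorbs the $\sigma_i\sigma_j$ cross-terms in the two weighted-degree sums), so $Z_{ij}=W_i+W_j$ where $W_i$ depends only on hyperedges through $i$ (not $j$) and $W_j$ only on hyperedges through $j$ (not $i$); these sums are independent and, by the $A\leftrightarrow B$ symmetry, identically distributed.

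To compute the MGF of $W_j$, I stratify the contributing hyperedges by $a=|e\cap A|\in\{0,1,\ldots,k-1\}$: each contributes $k-1-2a$ and is Bernoulli of parameter $p$ for $a=0$ (in-cluster) or $q$ otherwise. Expanding $-\log(1+p_e(e^{-tC}-1))=p_e(1-e^{-tC})+O(p_e^2)$ (the quadratic remainder contributing $o(\log n)$ over all edges) and using $\binom{n/2-1}{a}\binom{n/2-1}{k-1-a}/\binom{n-1}{k-1}\to 2^{-(k-1)}\binom{k-1}{a}$ gives $-\log\mathbb{E}[e^{-tW_j}]=(1+o(1))\log n\cdot J(t)$, with $J(t)$ equal to the bracketed expression in the definition of $I_2$ divided by $2^{k-1}$. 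Independence of $W_i,W_j$ doubles the rate, so Chernoff yields $\Pr(Z_{ij}\leq 0)\leq n^{-2I_2+o(1)}$; a matching lower bound follows from sharp large deviations (Bahadur--Rao) or an explicit exponential tilt of the underlying Bernoullis.

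For the achievability direction ($I_2>1$), a union bound over the $\lfloor n^2/4\rfloor$ cross-cluster pairs gives $\Pr(\exists\,(i,j):Z_{ij}\leq 0)\leq n^{2-2I_2+o(1)}\to 0$, so $\sigma$ is a single-swap local optimum with high probability. To upgrade to genuine global optimality, I index competing balanced bipartitions by pairs $(S,T)\subseteq A\times B$ with $|S|=|T|=m$, for each $m=1,\ldots,n/2$, and union-bound the event that $(S,T)$ yields cut value at most $\mathrm{cut}(\sigma)$. The cut difference again decomposes into independent hyperedge contributions, now stratified by $(|e\cap S|,|e\cap T|,|e\cap (A\setminus S)|,|e\cap(B\setminus T)|)$, and a generalization of the single-swap Chernoff produces a rate of order $2mI_2\log n$ that dominates the entropy $\log\binom{n/2}{m}^2\leq 2m\log(n/m)+O(m)$ for every $m\geq 1$ once $I_2>1$.

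For the impossibility direction ($I_2<1$), let $N=\#\{(i,j):\sigma_i=+1,\sigma_j=-1,\,Z_{ij}<0\}$; the sharp lower tail gives $\mathbb{E}\,N\geq n^{2(1-I_2)+o(1)}\to\infty$. A naive second-moment bound is lossy because $Z_{ij}$ and $Z_{ij'}$ are positively correlated through the shared $W_i$; the cleaner route is a selection argument—restrict each $W_i$ to hyperedges avoiding a fixed small auxiliary set to obtain mutually independent variables $\tilde W_i$ ($i\in A$) at a controllable cost to the rate, use a first-moment computation to locate many $i$ with $\tilde W_i$ below threshold and symmetrically many $j\in B$, then pair across disjoint auxiliary sets to witness $Z_{ij}<0$ with probability $1-o(1)$. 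The principal technical burden in the whole proof is the achievability amplification for $m=\Theta(n)$, where the multi-flip rate function must be evaluated carefully and shown to exceed the linear-in-$n$ entropy; a clean shortcut is to invoke any partial-recovery guarantee for $k$-HSBM in the literature (for instance one of the hypergraph spectral methods cited in the introduction) to restrict a priori to $m=o(n)$, after which the single-swap Chernoff transfers directly.
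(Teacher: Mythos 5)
Your route is essentially the paper's: the single-swap statistic, its decomposition into independent per-vertex sums, the stratification of hyperedges by $|e\cap A|$ yielding exactly the rate function $I_2$ via an exponential tilt (this is the paper's Theorem~\ref{thm:tail} specialized as in Section~\ref{sec:tailbound2}), the decoupling-by-restriction argument for the converse (the paper conditions on hyperedges meeting a small set $U$ in at least two vertices, which is your ``auxiliary set'' device), and the union bound over Hamming distances for achievability (mirroring the upper-bound proof of Theorem~\ref{thm:A}). The explicit cancellation showing $Z_{ij}=W_i+W_j$ with the shared hyperedges contributing zero is correct and is a clean way to get independence. One concrete warning: your proposed ``shortcut'' for the $m=\Theta(n)$ regime --- invoking a partial-recovery guarantee for some spectral algorithm to restrict a priori to $m=o(n)$ --- does not work. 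The estimator $\widehat{\sigma}_{trunc}$ is the \emph{global} maximizer of the truncated objective $g_H$, and a recovery guarantee for a different estimator says nothing about whether $g_H$ has a near-antipodal balanced maximizer; you must rule out bisections at distance $\Theta(n)$ directly. Fortunately your primary route does this and is exactly what the paper does for the MLE: for $d\geq n/\log\log n$ the number of relevant hyperedges is $\Omega\bigl(\tfrac{n}{\log\log n}\binom{n-1}{k-1}\bigr)$, so the large-deviation rate is $\Omega\bigl(\tfrac{n\log n}{\log\log n}\bigr)$, which dominates the $O(n)$ entropy of all balanced bipartitions. With that clarification, the argument is sound.
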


\begin{figure}
\centering
\includegraphics[scale=0.6]{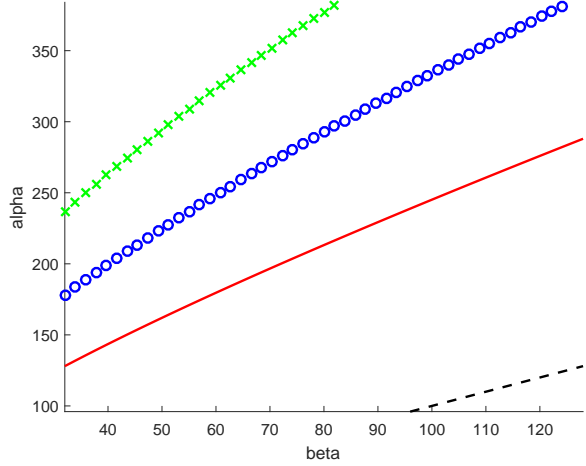}
\caption{Visualization of $I$, $I_2$, $I_{sdp}$ when $k=6$: (a) the solid line represents $I(\alpha,\beta)=1$, (b) the circled line represents $I_2(\alpha,\beta)=1$, and (c) the x-marked line represents $I_{sdp}(\alpha,\beta)=1$. The dashed black line is the graph of $\alpha = \beta$.}
\label{fig:thresholds}
\end{figure}

We note that
$$
I(\alpha,\beta) = \max_{t \geq 0} \frac{1}{2^{k-1}}\left(\alpha(1-e^{-(k-1)t}) + \beta(1-e^{(k-1)t})\right),
$$
hence $I(\alpha,\beta) < I_2(\alpha,\beta)$ for any $\alpha > \beta > 0$. Figure \ref{fig:thresholds} shows the relations between $I$, $I_2$ and $I_{sdp}$ for $k=6$.

Theorem \ref{thm:C} and the discussion above implies that the truncate-and-relax algorithm fails with probability $1-o_n(1)$ if $I_2(\alpha,\beta) > 1$. We conjecture that this is the correct threshold of the performance of the algorithm. In future work, we will attempt to prove this conjecture by improving the matrix concentration bound as discussed above.

\begin{conjecture}
If $I_2(\alpha,\beta) > 1$, then $\widehat{\Sigma}(\cH) = \sigma\sigma^T$ with probability $1-o_n(1)$. 
\end{conjecture}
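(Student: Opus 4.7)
The plan is to close the gap between $I_{sdp}$ from Theorem~\ref{thm:B} and the conjectured threshold $I_2$ by sharpening the matrix-concentration step in the dual-certificate analysis. The proof of Theorem~\ref{thm:B} reduces SDP exact recovery of $\sigma\sigma^T$ to showing that the signed Laplacian $L$, together with a suitable rank-one correction handling the $\sigma$-direction, is positive semidefinite on $\one^\perp$ with high probability, and it controls $\|L - \E L\|$ via the Matrix Bernstein inequality. The slack in Matrix Bernstein, namely a spurious $\sqrt{\log n}$ factor, accounts entirely for the gap between $I_{sdp}$ and $I_2$.

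My first step would be to pin down $\lambda_2(\E L)$ precisely. Because $\E L$ has only two distinct block types under the partition induced by $\sigma$, its diagonalization is explicit and its relevant non-trivial eigenvalue is of the form $c(\alpha,\beta,k)\log n$; a short computation via Legendre duality of the Chernoff exponent shows that $c(\alpha,\beta,k)$ agrees with the quantity that makes $I_2(\alpha,\beta) - 1$ positive, i.e. $\lambda_2(\E L)$ has exactly the right leading constant.

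The main step is to replace the Matrix Bernstein bound by a direct moment-method analysis of $M := L - \E L$. Writing $M = \sum_{e} (X_e - \E X_e)\, L^{(e)}$ as a sum over hyperedges $e \in \binom{V}{k}$ of independent centered matrices (each of rank at most $k$ and supported on $e$), the trace-power $\operatorname{tr}(M^{2p})$ for $p = \Theta(\log n)$ expands as a sum over closed walks of length $2p$ in an auxiliary multi-hypergraph. By the independence of the $X_e$, only walks that traverse each hyperedge an even number of times contribute, and careful enumeration---in the spirit of tangle-free walk counts for non-backtracking operators---should yield $\|M\| = O(\sqrt{\log n})$ with the leading constant matching $I_2$, rather than the $O(\log n)$ provided by Matrix Bernstein. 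Combined with the first step, this gives $L + \nu(\sigma\sigma^T - I) \succeq 0$ on $\one^\perp$ with high probability whenever $I_2(\alpha,\beta) > 1$.

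The hard part will be the combinatorial enumeration. Unlike the graph case $k=2$, each $L^{(e)}$ is supported on $k$ vertices and has nontrivial internal structure, so a walk can ``dwell'' within a single hyperedge before moving to an adjacent hyperedge through a shared vertex; this inflates the branching factor and demands a refined bookkeeping of walk types. A secondary challenge is matching the exact leading constant $I_2$: the extremal walks should correspond to the swap-perturbation directions saturating the Chernoff bound in the $\max_t$ definition of $I_2$, while contributions of all other walk types must be shown to be strictly subleading. The numerical evidence presented in Section~\ref{sec:conjecture} supports that this program is feasible.
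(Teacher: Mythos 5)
This statement is an open conjecture in the paper --- the authors explicitly defer it to future work --- so there is no proof of record to compare against; what you have written is a program, and it must be judged on whether it could work. Its general direction (replace the Matrix Bernstein step by a sharper, moment-method analysis of $L_\Gamma - \E L_\Gamma$) is the same one the authors suggest in Section \ref{sec:conjecture}. But the central quantitative claim of your plan is wrong, and in a way that dooms the strategy as stated. You assert that a trace-power argument should give $\|L_\Gamma - \E L_\Gamma\| = O(\sqrt{\log n})$. It cannot: the diagonal entries $(L_\Gamma - \E L_\Gamma)_{aa}$ are centered sums of $\Theta(\binom{n-1}{k-1})$ Bernoullis with variance $\Theta(\log n)$, and the maximum over $n$ vertices of such sums is $\Theta(\log n)$ in this Poissonian regime (this is exactly the large-deviations computation behind Theorem \ref{thm:tailbound2}). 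Since $\|M\| \geq \max_a |M_{aa}|$ for symmetric $M$, the norm of the full fluctuation is genuinely $\Theta(\log n)$. Worse, no norm bound on the full fluctuation, however sharp, can reach the threshold $I_2=1$: the quantity $2(L_\Gamma)_{aa}$ is precisely the swap statistic $X_a$ of Theorem \ref{thm:tailbound2}, with $\E (L_\Gamma)_{aa} = \frac{k-1}{2^{k-1}}(\alpha-\beta)\log n = \lambda_3(\Pi \E L_\Gamma \Pi)(1+o(1))$, and as $I_2 \downarrow 1$ the worst vertex has $(L_\Gamma)_{aa}$ barely positive, i.e.\ $\max_a |(L_\Gamma - \E L_\Gamma)_{aa}|$ is $(1-o(1))\lambda_3(\Pi\E L_\Gamma\Pi)$. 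So the inequality $\lambda_3(\Pi\E L_\Gamma\Pi) > \|\Pi(L_\Gamma-\E L_\Gamma)\Pi\|$ that your plan aims to establish is simply false near the conjectured threshold. Relatedly, your claim that $\lambda_3(\Pi\E L_\Gamma\Pi)$ ``agrees with the quantity that makes $I_2-1$ positive'' conflates an eigenvalue of the mean with a large-deviations exponent; $I_2$ enters only as the rate at which the worst diagonal entry crosses zero.

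The repair, which is what the quoted theorem from \cite{Bandeira2016laplacian} accomplishes for $k=2$, is to treat the diagonal and off-diagonal parts asymmetrically: write $\Pi L_\Gamma \Pi \succeq \bigl(\min_a (D_\Gamma)_{aa}\bigr)\Pi - \Pi(\Gamma - \E\Gamma)\Pi - \Pi(\E\Gamma)\Pi - (\text{projection corrections})$, control $\min_a (D_\Gamma)_{aa} > \epsilon \log n$ by a union bound over the scalar tail estimate (this is exactly where the condition $I_2(\alpha,\beta)>1$ is used, and it is where your Legendre-duality remark actually belongs), and prove the adjacency-type bound $\|\Gamma - \E\Gamma\| = O(\sqrt{\log n})$. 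Only this last step calls for a trace/moment or comparison argument, and it is the genuinely open difficulty the authors flag: the entries of $\Gamma$ are not independent because each hyperedge contributes to all $\binom{k}{2}$ pairs inside it. Your combinatorial walk-counting machinery should be aimed at $\Gamma - \E\Gamma$ (where $O(\sqrt{\log n})$ is the right answer and the constant does not need to match $I_2$), not at the Laplacian $L_\Gamma - \E L_\Gamma$.
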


\section{Maximum-likelihood estimator}

Recall that $\widehat{\sigma}_{MLE}(H)$ is a maximizer of the likelihood probability $\Pr(\cH=H|\sigma=x)$ (ties are broken arbitrarily). Let $f_H(x) = \log \Pr(\cH=H|\sigma=x)$ for $x \in \{\pm 1\}^V$.

For brevity, let us first introduce a few notations. Let $x \in \{\pm 1\}^V$. Let $x^{\oeq k}$ be a vector in $\{0,1\}^{\binom{V}{k}}$ where
$$
(x^{\oeq k})_e = \begin{cases}
1 & \text{if $x_{e_1}=x_{e_2}=\cdots=x_{e_k}$} \\
0 & \text{otherwise}
\end{cases}
$$
for each $e = \{e_1,\cdots,e_k\} \subseteq V$. 
Let $H$ be a $k$-uniform hypergraph on the vertex set $V$ with the edge set $E(H)$. Let $A_H$ be the vector in $\{0,1\}^{\binom{V}{k}}$ such that
$$
(A_H)_e = \begin{cases}
1 & \text{if $e \in E(H)$} \\
0 & \text{otherwise}
\end{cases}
$$
for each $e \in \binom{V}{k}$. Note that 
\begin{eqnarray*}
\inprod{A_H,x^{\oeq k}}&=&\sum_{e \in \binom{V}{k}} (A_H)_e (x^{\oeq k})_e \\
&=& \sum_{e \in E(H)} \one\{\text{$e$ is in-cluster with respect to $x$}\}.
\end{eqnarray*}
Hence, $\inprod{A_H,x^{\oeq k}}$ is equal to the number of in-cluster edges in $H$ with respect to the partition $x$. 

The ML estimator tries to find the ``best'' partition $x \in \{\pm 1\}^V$ with equal number of $1$'s and $-1$'s. Intuitively, if $p > q$, i.e., in-cluster edges appears more likely than cross-cluster edges (we call such case \emph{assortative}), then the best partition will correspond to $x$ which maximizes the number of in-cluster edges w.r.t. $x$. On the other hand, if $p < q$ (we call such case \emph{disassortative}) then the best partition will corresponds to the minimizer, respectively. The following proposition confirms this intuition. We defer the proof to Section \ref{sec:propMLE} in the appendix.

\begin{proposition}
\label{prop:MLE}
The ML estimator $\widehat{\sigma}_{MLE}(H)$ is the maximizer (minimizer, respectively) of $\inprod{A_H,x^{\oeq k}}$ if $p > q$ (if $p < q$, respectively) over all $x \in \{\pm 1\}^V$ such that $\one^T x = 0$. 
\end{proposition}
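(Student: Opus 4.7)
The plan is to write down the log-likelihood $f_H(x) = \log\Pr(\cH=H\mid\sigma=x)$ explicitly, observe that every term except the one counting in-cluster hyperedges is constant across balanced partitions, and then read off the sign of the coefficient.

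First I would decompose the event $\{\cH = H\}$ into independent Bernoulli trials over the $\binom{n}{k}$ potential hyperedges. Conditioned on $\sigma = x$, each $e \in \binom{V}{k}$ that is in-cluster w.r.t.\ $x$ contributes $p$ if $e \in E(H)$ and $1-p$ otherwise, while each cross-cluster $e$ contributes $q$ or $1-q$. Writing $m(x) := \inprod{A_H, x^{\oeq k}}$ for the number of in-cluster hyperedges of $H$ w.r.t.\ $x$, and letting $N(x)$ be the total number of in-cluster $k$-subsets w.r.t.\ $x$, taking logs gives
\begin{equation*}
f_H(x) = m(x)\log p + (N(x)-m(x))\log(1-p) + (|E(H)|-m(x))\log q + \bigl(\tbinom{n}{k}-N(x)-|E(H)|+m(x)\bigr)\log(1-q).
\end{equation*}

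The next step is the key observation: for any $x \in \{\pm 1\}^V$ with $\one^T x = 0$, the value $N(x)$ equals $2\binom{n/2}{k}$, because the in-cluster $k$-subsets are exactly those contained in the $+1$-side or in the $-1$-side and both sides have size $n/2$. Thus $N(x)$ does not depend on the particular balanced partition $x$; neither does $|E(H)|$ nor $\binom{n}{k}$. Grouping terms, I get
\begin{equation*}
f_H(x) = m(x)\cdot \log\frac{p(1-q)}{q(1-p)} + C(H),
\end{equation*}
where $C(H)$ is a function of $H$ alone (not of $x$) on the set of balanced partitions.

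Finally, since $t \mapsto \frac{t(1-q)}{q(1-t)}$ is strictly increasing in $t$ on $(0,1)$ and takes the value $1$ at $t = q$, the log-likelihood ratio $\log\frac{p(1-q)}{q(1-p)}$ is positive iff $p > q$ and negative iff $p < q$. Maximizing $f_H(x)$ over balanced $x$ is therefore equivalent to maximizing $\inprod{A_H, x^{\oeq k}}$ in the assortative case and to minimizing it in the disassortative case, which is exactly the claim. There is no real obstacle here; the only point that could be overlooked is the balance condition $\one^T x = 0$, which is precisely what makes $N(x)$ and hence the additive constant independent of $x$.
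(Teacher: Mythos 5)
Your proposal is correct and follows essentially the same route as the paper: expand the log-likelihood over independent Bernoulli trials, use the balance condition $\one^T x = 0$ to make the count of in-cluster $k$-subsets (hence the additive constant) independent of $x$, and read off the sign of $\log\frac{p(1-q)}{q(1-p)}$. No gaps.
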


\section{Sharp phase transition in $\HSBM(n,p,q;k)$}
\label{sec:IT}

In this section, we prove Theorem \ref{thm:A}. The techniques we use can be seen as a hypergraph extension of the techniques used in \cite{abbe2016exact}. 

Informally, we are going to argue that the event for the ground truth $\sigma$ being the best guess (i.e. $\sigma$ is the global optimum of the likelihood function) can be approximately decomposed into the events that $\sigma$ is unimprovable by flipping the label of $v$ for $v \in V$. This type of phenomenon was called \emph{local-to-global amplification} in \cite{abbe2016community} which seems to hold for more general classes of the graphical model. 

Let $p_{fail}$ be the probability that the ML estimator fails to recover the hidden partition, i.e.,
$$
p_{fail} = \Pr_{(\sigma,\cH)\sim \HSBM(n,p,q;k)} \left(\widehat{\sigma}_{MLE}(\cH) \not\in \{\sigma,-\sigma\} \right).
$$
As we have seen in the previous section, the ML estimator $\widehat{\sigma}_{MLE}(H)$ is a maximizer of $\inprod{A_H, x^{\oeq k}}$ over the choices of $x \in \{\pm 1\}^V$ such that $\one^T x = 0$. Thus, $p_{fail}$ is equal to the probability that $\inprod{A_\cH,x^{\oeq k}} \geq \inprod{A_\cH, \sigma^{\oeq k}}$ happens for some $x \not\in\{\sigma,-\sigma\}$ satisfying the balance condition $\one^T x= 0$.

\subsection{Lower bound}

We first prove the impossibility part of Theorem \ref{thm:A}. For concreteness, we focus on the assortative case, i.e., $p > q$ but the proof can be easily adapted for the disassortative case. 

Before we prove the lower bound, let us consider the usual stochastic block model for graphs which corresponds to $k=2$ in order to explain the intuition of the proof. Given a sample $G$, partition $\sigma$ and a vertex $v \in V$, let us define the \emph{in-degree} of $v$ as
$$
\mathrm{indeg}_{G,\sigma}(v) = \#(vw \in E(G): \sigma_v = \sigma_w),
$$
and the \emph{out-degree} of $v$ as
$$
\mathrm{outdeg}_{G,\sigma}(v;\sigma) = \#(vw \in E(G): \sigma_v \neq \sigma_w).
$$
We will omit the subscript $G,\sigma$ if the context is clear.

Suppose that there are vertices $v$ and $w$ from different clusters such that the in-degree of each vertex is smaller than the out-degree of each vertex. In this case, swapping the label of $v$ and $w$ will yield a new balanced partition with greater number of in-cluster edges, hence the ML estimator will fail to recover $\sigma$. Now, suppose that
$$
\Pr(\mathrm{indeg}(v) < \mathrm{outdeg}(v)) = \omega(n^{-1})
$$
for all $v$. If those events were independent, we would get
\begin{eqnarray*}
1-\Pr(\exists v \text{ s.t. } \sigma_v=1, \mathrm{indeg}(v) < \mathrm{outdeg}(v)) &=& \prod_{v:\sigma_v=1}\left(1-\Pr(\mathrm{indeg}(v) < \mathrm{outdeg}(v))\right) \\
&\leq& (1-\omega(n^{-1}))^{n/2} \leq e^{-\omega(1)},
\end{eqnarray*}
and vise versa for $w$. It would imply that there is a ``bad'' pair $(v,w)$ with probability $1-o_n(1)$ hence the ML estimator fails with probability $1-o_n(1)$. We remark that this argument is not mathematically because the in-degrees of vertex $v$ and $w$ (as well as out-degrees of them) are \emph{not} independent as they share a variable indicating whether $\{v,w\}$ is an edge or not. However, we can overcome it by conditioning on highly probable event which makes those events independent, as in \cite{abbe2016exact}.

We extend the definitions of in-degree and out-degree for the $k$-HSBM as
\begin{eqnarray*}
\mathrm{indeg}_{H,\sigma}(v) &=& \#(e \in E(H): v \in e, \text{ $e$ is in-cluster w.r.t. $\sigma$}), \\
\mathrm{outdeg}_{H,\sigma}(v) &=& \#(e \in E(H): v \in e, \text{ $e$ is cross-cluster but $e\setminus\{v\}$ is in-cluster w.r.t. $\sigma$}).
\end{eqnarray*}
Observe that they coincide with the corresponding definition for the usual SBM ($k=2$). We note that the sum of in-degree and out-degree is not equal to the degree of $v$, the number of hyperedges in $H$ containing $v$ when $k \geq 3$. We extended those definitions in this way because any edge $e$ which is neither in-cluster nor cross-cluster but $e\setminus\{v\}$ is in-cluster does not contribute on $\inprod{A_\cH, x^{\oeq k}}$ when we flip the sign of the label of $v$.

Now, note that the in-degree and the out-degree of $v$ are independent binomial random variables with different parameters. To estimate the probability 
$$
\Pr\left(\mathrm{indeg}(v)-\mathrm{outdeg}(v) < 0\right),
$$
we provide a tight estimate for the tail probability of a weighted sum of independent binomial variables in Section \ref{sec:tail}. Precisely we prove that
$$
\Pr(\mathrm{indeg}(v) - \mathrm{outdeg}(v) < -\delta \log n) = n^{-I(\alpha,\beta)+o_n(1)}
$$ 
as long as $\delta=\delta(n)$ vanishes as $n$ grows, where
$$
I(\alpha,\beta) = \frac{1}{2^{k-1}}\left(\sqrt{\alpha}-\sqrt{\beta}\right)^2.
$$
As we discussed, if $I(\alpha,\beta)<1$ then the tail probability is of order $\omega(n^{-1})$ and it implies that the ML estimator fails with probability $1-o_n(1)$. 

\begin{theorem}
Let $I(\alpha,\beta) = \frac{1}{2^{k-1}}(\sqrt{\alpha}-\sqrt{\beta})^2$. If $I(\alpha,\beta)<1$, then $p_{fail} = 1-o_n(1)$. 
\end{theorem}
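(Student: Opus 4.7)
The plan is to make rigorous the ``local-to-global amplification'' sketch above: I will exhibit, with probability $1-o_n(1)$, a pair of vertices $v\in\sigma^{-1}(+1)$ and $w\in\sigma^{-1}(-1)$ such that the balanced configuration $x:=\sigma^{(v\leftrightarrow w)}$ obtained by swapping the labels of $v$ and $w$ satisfies $\inprod{A_\cH,x^{\oeq k}}>\inprod{A_\cH,\sigma^{\oeq k}}$. For $p>q$, Proposition \ref{prop:MLE} then forces $\widehat\sigma_{MLE}(\cH)\notin\{\pm\sigma\}$, yielding $p_{fail}=1-o_n(1)$; the disassortative case is symmetric. An edge-by-edge case split according to whether $v$, $w$, both, or neither lie in a given $e\in\binom{V}{k}$ gives the exact identity
\[
\inprod{A_\cH,x^{\oeq k}}-\inprod{A_\cH,\sigma^{\oeq k}} = \bigl[\mathrm{outdeg}(v)-\mathrm{indeg}(v)\bigr]+\bigl[\mathrm{outdeg}(w)-\mathrm{indeg}(w)\bigr]-D_{vw},
\]
where $D_{vw}$ is a nonnegative correction counting those hyperedges of $\cH$ containing both $v$ and $w$ whose remaining $k-2$ vertices are monochromatic under $\sigma$. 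Since $\E D_{vw}=O(\log n/n)$ for every pair, a first-moment/union-bound argument guarantees $D_{v^*w^*}\le 1$ w.h.p.\ for the pair I will eventually select.

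The per-vertex events $\{\mathrm{indeg}(v)<\mathrm{outdeg}(v)\}$ are \emph{not} independent across $v$ in the same cluster, because they share random hyperedges. To decouple, fix $\varepsilon\in(0,1-I(\alpha,\beta))$, pick arbitrary $S_\pm\subseteq\sigma^{-1}(\pm 1)$ with $|S_\pm|=\lceil n^{1-\varepsilon}\rceil$, and set $T_\pm:=\sigma^{-1}(\pm 1)\setminus S_\pm$, so $|T_\pm|=n/2-o(n)$. For $v\in S_+$ define the \emph{restricted} degrees $\mathrm{indeg}'(v),\mathrm{outdeg}'(v)$ by counting only edges $e\ni v$ with $e\setminus\{v\}\subseteq T_+$ or $e\setminus\{v\}\subseteq T_-$, respectively. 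Since for distinct $v,v'\in S_+$ the eligible edge sets are disjoint, the pairs $(\mathrm{indeg}'(v),\mathrm{outdeg}'(v))_{v\in S_+}$ are mutually independent, and similarly over $S_-$. A standard Chernoff estimate applied to the extra terms (hyperedges through $v$ with another endpoint in $S_\pm$) gives $|\mathrm{indeg}(v)-\mathrm{indeg}'(v)|=o(\log n)$ and the same for $\mathrm{outdeg}$, uniformly in $v$, with probability $1-o(1)$.

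Applying the tail estimate from Section \ref{sec:tail} to the restricted degrees (whose binomial parameters differ from the unrestricted ones by a factor $1-O(n^{-\varepsilon})$, so the same rate function $I$ governs them) yields, for some $\delta=\delta(n)\to 0$,
\[
\Pr\bigl(\mathrm{indeg}'(v)-\mathrm{outdeg}'(v)\le-\delta\log n\bigr)=n^{-I(\alpha,\beta)+o_n(1)},
\]
which is $\omega(n^{-1})$ since $I(\alpha,\beta)<1$. Independence across $v\in S_+$ upgrades this to
\[
\Pr\bigl(\text{no such }v\text{ in }S_+\bigr)\le\bigl(1-n^{-I-o(1)}\bigr)^{|S_+|}\le\exp\bigl(-n^{1-I-\varepsilon-o(1)}\bigr)=o(1),
\]
and symmetrically over $S_-$. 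Hence with probability $1-o(1)$ there exist $v^*\in S_+$ and $w^*\in S_-$ each with restricted imbalance at most $-\delta\log n$; translating back to the true degrees loses only $o(\log n)$, and on the high-probability event $D_{v^*w^*}\le 1$ the identity above yields
\[
\inprod{A_\cH,x^{\oeq k}}-\inprod{A_\cH,\sigma^{\oeq k}}\ge 2\delta\log n-o(\log n)-1>0,
\]
as required.

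The main obstacle throughout is precisely the dependence among the per-vertex tail events for different $v$ in the same cluster, which is amplified in the hypergraph setting because each hyperedge contributes to the imbalance of $k$ vertices at once; the restriction to $T_\pm$ resolves it cleanly, while all other approximations (restricted versus full degrees, and the correction $D_{vw}$) are comfortably absorbed by the $\delta\log n$ slack.
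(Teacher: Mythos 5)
Your proof is correct and follows essentially the same route as the paper: reduce failure of the MLE to a single swap of two oppositely-labelled vertices, decouple the per-vertex imbalance events by restricting each vertex's in/out-degree to hyperedges whose other endpoints avoid the chosen subsets (your restricted degrees coincide with the paper's $Y_a$, $Z_a$ for $U=S_+\cup S_-$, with a polynomial- rather than polylog-fraction subset), invoke the binomial tail estimate of Section~\ref{sec:tail}, and amplify over the resulting independent trials. The one place you are more careful than the paper is the explicit nonnegative correction $D_{vw}$ from hyperedges containing both swapped vertices, which the paper's displayed identity for $\inprod{A_\cH,\Sigma}-\inprod{A_\cH,\Sigma^{(ab)}}$ silently drops; your union bound over pairs in $S_+\times S_-$ shows $D_{v^*w^*}=O(1)$, which is comfortably absorbed by the $\delta\log n$ slack exactly as you say.
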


\begin{proof}

Let $A = \{v \in V: \sigma_v = +1\}$ and $B = V \setminus A$. For $a \in A$ and $b \in B$, let us define $\sigma^{(ab)}$ to be the vector obtained by flipping the signs of $\sigma_a$ and $\sigma_b$. By definition, $\sigma^{(ab)}$ is balanced. We are going to prove that with high probability there exist $a \in A$ and $b \in B$ such that $\inprod{A_\cH, \sigma^{\oeq k}} \leq \inprod{A_\cH, (\sigma^{(ab)})^{\oeq k}}$. For simplicity, let $\Sigma = \sigma^{\oeq k}$ and $\Sigma^{(ab)} = (\sigma^{(ab)})^{\oeq k}$.

Note that 
\begin{eqnarray*}
\inprod{A_\cH, \Sigma} - \inprod{A_\cH, \Sigma^{(ab)}}
&=& \left(\mathrm{indeg}_{\cH,\sigma}(a)-\mathrm{outdeg}_{\cH,\sigma}(a)\right)\\
& & +\left(\mathrm{indeg}_{\cH,\sigma}(b)-\mathrm{outdeg}_{\cH,\sigma}(b)\right).
\end{eqnarray*}
For $v \in V$, let $E_v$ be the event such that 
$$
\mathrm{indeg}_{\cH,\sigma}(v) - \mathrm{outdeg}_{\cH,\sigma}(v) \leq 0
$$
holds. Then, $E_a \cap E_b$ implies that $\inprod{A_\cH, \Sigma} - \inprod{A_\cH, \Sigma^{(ab)}} \leq 0$. Hence
$$
p_{fail} = \Pr\left(\exists a\in A, b\in B: \inprod{A_\cH, \Sigma} - \inprod{A_\cH, \Sigma^{(ab)}}\right)
\geq \Pr\left(\bigcup_{a\in A} E_a \cap \bigcup_{b\in B} E_b\right).
$$
We recall that if $E_v$ for $v \in V$ were mutually independent, we can exactly express the right-hand side as
$$
\left(1-\prod_{a\in A} \Pr(\neg E_a)\right)\left(1-\prod_{b\in B} \Pr(\neg E_b)\right)
$$
but unfortunately it is not the case. To see this, let us fix $a \in A$ and $a' \in A$. Then, we have
\begin{eqnarray*}
\mathrm{indeg}(a) - \mathrm{outdeg}(a) &=& \sum_{e \ni a: e\subseteq A} (A_\cH)_e - \sum_{e \ni a: e\cap A = \{a\}} (A_\cH)_e, \text{ and}\\
\mathrm{indeg}(a') - \mathrm{outdeg}(a') &=& \sum_{e \ni a': e\subseteq A} (A_\cH)_e - \sum_{e \ni a': e\cap A = \{a'\}} (A_\cH)_e.
\end{eqnarray*}
They share variables $(A_\cH)_e$ for $e$ satisfying $\{a,a'\}\subseteq e \subseteq A$. The expected contribution of those variables is $p\binom{|A|-2}{k-2} = o_n(1)$, so we may expect
$$
\Pr(E_a \cup E_{a'}) \approx 1-\Pr(\neg E_a)\Pr(\neg E_{a'}).
$$
In the similar spirit, we are going to prove that for an appropriate choice of $U \subseteq V$, the events $\{E_a\}_{a \in U \cap A}$ and $\{E_b\}_{b \in U \cap B}$ are approximately independent, so
\begin{eqnarray*}
p_{fail} &\geq& \Pr\left(\bigcup_{a\in A\cap U} E_a \cap \bigcup_{b\in B\cap U} E_b\right) \\
&\approx& \left(1-\prod_{a \in A\cap U} \Pr(\neg E_a)\right)\left(1-\prod_{b \in B\cap U} \Pr(\neg E_b)\right).
\end{eqnarray*}
Together with the tight estimate on $\Pr(E_v)$, it would give us a good lower bound on $p_{fail}$.  

Let $U \subseteq V$ be a set of size $\gamma n$ where $|U \cap A| = |U\cap B|$. We will choose $\gamma = \gamma(n)$ later to be poly-logarithmically decaying function in $n$. Let $\cS$ be the set of $e \in \binom{V}{k}$ such that $e$ contains at least two vertices in $U$. We would like to condition on the values of $\{(A_\cH)_e\}_{e\in\cS}$, which captures all dependency occurring among $E_v$'s for $v \in U$. 

Let $\delta = \delta(n)$ be a positive number depending on $n$ which we will choose later, and let $F$ be the event that the inequality
$$
\max_{v \in U} \sum_{e \in \cS: e \ni v} (A_\cH)_e \leq \delta \log n
$$
holds. For each $a \in A \cap U$, let $E_a'$ be the event that the inequality
$$
\sum_{e\subseteq A: e \cap U = \{a\}} (A_\cH)_e - \sum_{\substack{e: e \cap U = \{a\} \\ e\setminus \{a\} \subseteq B}} (A_\cH)_e \leq -\delta \log n
$$
is satisfied. We claim that $E_a' \cap F \subseteq E_a$. It follows from the direct calculation, as if we assume $E_a' \cap F$, then
\begin{eqnarray*}
\mathrm{indeg}(v)-\mathrm{outdeg}(v)
&=& \sum_{e \ni a: e\subseteq A} (A_\cH)_e - \sum_{e \ni a: e\cap A = \{a\}} (A_\cH)_e \\
&\leq& \sum_{e\subseteq A: e \cap U = \{a\}} (A_\cH)_e - \sum_{\substack{e: e \cap U = \{a\} \\ e\setminus \{a\} \subseteq B}} (A_\cH)_e + \sum_{e \in \cS: e\ni a} (A_\cH)_e \\
&\leq& \sum_{e\subseteq A: e \cap U = \{a\}} (A_\cH)_e - \sum_{\substack{e: e \cap U = \{a\} \\ e\setminus \{a\} \subseteq B}} (A_\cH)_e + \delta \log n \\
&\leq& 0.
\end{eqnarray*}
We get
$$
p_{fail} \geq \Pr\left(\bigcup_{a\in A\cap U} E_a \cap \bigcup_{b \in B\cap U} E_b\right) \geq
\Pr\left(\bigcup_{a\in A\cap U} E_a' \cap \bigcup_{b \in B\cap U} E_b' \middle| F\right) \Pr(F).
$$
Note that $E_v'$ only depends on the set of variables $\{(A_\cH)_e: e\cap U = \{v\}\}$, which are mutually disjoint for $v \in U$. Also, $\{(A_\cH)_e: e \in \cS\}$ is disjoint with any of those sets of variables. Hence, events $F$ and $\{E_v'\}_{v\in U}$ are mutually independent, and we get
\begin{eqnarray*}
\Pr\left(\bigcup_{a\in A\cap U} E_a' \cap \bigcup_{b \in B\cap U} E_b' \middle| F\right) &=& 
\Pr\left(\bigcup_{a\in A\cap U} E_a'\right)\Pr\left(\bigcup_{b\in B\cap U} E_b'\right) \\
&=& \left(1-\prod_{a\in A\cap U} \Pr(\neg E_{a}')\right)\left(1-\prod_{b\in B\cap U} \Pr(\neg E_{b}')\right)
\end{eqnarray*}

We claim that 
$$
\prod_{a\in A\cap U} \Pr(\neg E_{a}') = o_n(1), \quad
\prod_{b\in B\cap U} \Pr(\neg E_{b}') = o_n(1) \quad \text{and} \quad 
\Pr(\neg F) = o_n(1),
$$
for appropriate choice of $\gamma$ and $\delta$. This immediately implies that $p_{fail}=1-o_n(1)$ as desired. 

Let us first prove $\Pr(\neg F) = o_n(1)$. Let $X_v$ be the random variable defined as
$$
X_v := \sum_{e \in \cS: e\ni v} (A_\cH)_e,
$$
for $v \in U$. We have
$$
\Pr(\neg F) = \Pr(\exists v\in U: X_v > \delta \log n) \leq \sum_{v \in U} \Pr(X_v > \delta \log n)
$$
by a union bound. Note that
\begin{eqnarray*}
\E X_v &\leq& \max(p,q) \#(e: e \ni v, |e\cap U| \geq 2) \\
&=& \frac{\max(\alpha,\beta)\log n}{\binom{n-1}{k-1}} \left(\binom{n-1}{k-1} - \binom{n-|U|}{k-1}\right) \\
&\lesssim& \left(1-(1-\gamma)^{k-1}\right)\log n = \Theta(\gamma\log n).
\end{eqnarray*}
Using a standard Chernoff bound, we get the following lemma. For completeness, we include the proof in the appendix (Section \ref{sec:tailbound0}). 

\begin{lemma}
\label{lemma:tailbound0}
Let $X$ be a sum of independent Bernoulli variables such that $\E X = \Theta(\gamma \log n)$ where $\gamma = o_n(\log^{-1} n)$. Let $\delta$ be a positive number which decays to 0 as $n$ grows, with $\delta = \omega_n(\log^{-1} n)$.  Then,
$$
\Pr\left(X > \delta \log n \right) \leq n^{-\delta \log \frac{\delta}{\gamma} + o_n(1)}.
$$
\end{lemma}

Letting $\gamma = \log^{-3} n$ and $\delta = (\log \log n)^{-1}$, we get 
$$
\delta \log \frac{\delta}{\gamma} = \frac{3\log \log n - \log \log \log n}{\log \log n} = 3-o_n(1)
$$
and so $\Pr(\neg F) = n^{-3+o_n(1)} = o_n(1)$. 

Now, we would like to prove that 
$$
\prod_{a\in A\cap U} \Pr(\neg E_{a}') = o_n(1) \quad \text{and} \quad \prod_{b\in B\cap U} \Pr(\neg E_{b}') = o_n(1)
$$
by showing that 
$$
\Pr(E_v') \geq n^{-I(\alpha,\beta)+o_n(1)}\quad \text{where } I(\alpha,\beta) = \frac{1}{2^{k-1}}(\sqrt{\alpha}-\sqrt{\beta})^2
$$
for any $v \in U$. This implies that
\begin{eqnarray*}
\prod_{a\in A\cap U} \Pr(\neg E_{a}') &\leq& \left(1-n^{-I(\alpha,\beta)+o_n(1)}\right)^{\gamma n/2} \\
&\leq& \exp\left(-\frac{\gamma}{2} n^{1-I(\alpha,\beta)+o_n(1)}\right),
\end{eqnarray*}
and since we assumed that $I(\alpha,\beta)<1$ and $\gamma = \log^{-3} n$, we get
$$
\prod_{a\in A\cap U} \Pr(\neg E_{a}') \leq e^{-\frac{n^{1-I(\alpha,\beta)}}{\log^3 n}} = o_n(1),
$$
and similarly $\prod_{b\in B\cap U} \Pr(\neg E_{b}') = o_n(1)$ as desired.

To estimate the probability that $E_a'$ happens, let $Y_a$ and $Z_a$ be random variables defined as
$$
Y_a = \sum_{e\subseteq A: e \cap U = \{a\}} (A_\cH)_e \quad \text{and} \quad Z_a=\sum_{\substack{e: e \cap U = \{a\} \\ e\setminus \{a\} \subseteq B}} (A_\cH)_e.
$$
Recall that $E_a'$ is the event that $Y_a-Z_a \leq -\delta \log n$ holds. 

\begin{lemma}
\label{lemma:tailbound1}
Let $Y$ be a binomial random variable from $\mathrm{Bin}(N, p)$ and $Z$ be a binomial random variable from $\mathrm{Bin}(N, q)$ where $N = \left(\frac{1}{2^{k-1}}\pm o_n(1)\right)\binom{n-1}{k-1}$, $p = \frac{\alpha \log n}{\binom{n-1}{k-1}}$ and $q = \frac{\beta \log n}{\binom{n-1}{k-1}}$. Let $I(\alpha,\beta) = \frac{1}{2^{k-1}}(\sqrt{\alpha}-\sqrt{\beta})^2$ and let $\delta$ be a positive number vanishing as $n$ grows. Then,
$$
\Pr(Y - Z \leq -\delta \log n) = n^{-I(\alpha,\beta)+o_n(1)}.
$$
\end{lemma}

In fact, we derive a generic tail bound for weighted sum of binomial random variables (Theorem \ref{thm:tail}) in Section \ref{sec:tail} of the appendix. Lemma \ref{lemma:tailbound1} is a direct corollary of Theorem \ref{thm:tail} and we defer the proof to Section \ref{sec:tailbound1}.
\end{proof}

%%%%%%%%%%%%%%%%%%%%%%%%
\subsection{Upper bound}

We are going to use a union bound to prove the upper bound. Let $x$ and $\sigma$ be vectors in $\{-1,+1\}^V$. The \emph{Hamming distance} between $x$ and $\sigma$ (denoted $d(x,\sigma)$) is defined as the number of $v \in V$ such that $x_v \neq \sigma_v$. Note that if $x$ and $\sigma$ are balanced, then
\begin{eqnarray*}
d(x,\sigma) &=& \#(v \in V: x_v = 1, \sigma_v = -1) + \#(v \in V: x_v = -1, \sigma_v = 1) \\
&=& \#(v \in V: x_v = 1) + \#(v \in V: \sigma_v=1)-2\#(v \in V: x_v=\sigma_v=1) \\
&=& n - 2\#(v \in V: x_v=\sigma_v=1),
\end{eqnarray*}
hence $d(x,\sigma)$ is even. 

Let us fix $\sigma$ and let $\cH$ be a $k$-uniform random hypergraph generated by the model under the ground truth $\sigma$. We note that the distribution of the random variable $\inprod{A_\cH, x^{\oeq k}-\sigma^{\oeq k}}$ is invariant under the permutation of $V$ preserving $\sigma$, hence it only depends on $d(x, \sigma)$. Hence, there is a quantity $p_{fail}^{(d)}$ which satisfies
$$
p_{fail}^{(d)} = \Pr(\inprod{A_\cH,x^{\oeq k} - \sigma^{\oeq k}} \geq 0)
$$
for any $x$ with $d(x,\sigma)=d$. Moreover, $p_{fail}^{(d)} = p_{fail}^{(n-d)}$ since our model is invariant under a global sign flip.

Recall that the ML estimator fails to recover $\sigma$ if and only if
$$
\inprod{A_\cH,x^{\oeq k}} \geq \inprod{A_\cH,\sigma^{\oeq k}}
$$
for some balanced $x \in \{\pm 1\}^V$ which is neither $\sigma$ nor $-\sigma$.
We remark that we count the equality as a failure, which will only make $p_{fail}$ larger. By union bound, we have
\begin{eqnarray*}
p_{fail} &\leq& \sum_{\substack{x \in \{\pm 1\}^V: \one^T x = 0, \\ 1\leq d(x,\sigma)\leq n-1}} \Pr(\inprod{A_\cH,x^{\oeq k} - \sigma^{\oeq k}} \geq 0) \\
&\leq& 2\sum_{\substack{d:1 \leq d \leq \frac{n}{2} \\ d\text{ is even}}} p_{fail}^{(d)} \cdot \#(x \in \{\pm 1\}^V: \one^T x = 0, d(x,\sigma)=d).
\end{eqnarray*}
We note that there is a one-to-one correspondence between a balanced $x$ and a pair of sets $(U_+,U_-)$ where
\begin{eqnarray*}
U_+ &=& \{v: x_v=-1, \sigma_v=1\} \subseteq \{v: \sigma_v=+1\} \\
U_- &=& \{v: x_v=1, \sigma_v=-1\} \subseteq \{v: \sigma_v=-1\},
\end{eqnarray*}
and we must have $d(x,\sigma)=2|U_+|=2|U_-|$ since $x$ is balanced. Hence, the number of balanced $x$'s with $d(x,\sigma)=d$ is equal to $\binom{n/2}{d/2}^2$. We have
$$
p_{fail} \leq 2\sum_{\substack{d:1 \leq d \leq \frac{n}{2} \\ d\text{ is even}}} \binom{n/2}{d/2}^2 p_{fail}^{(d)}.
$$
Now, let us formally state the main result of this section.

\begin{theorem}
Suppose that $I(\alpha,\beta) > 1$. Then,
$$
p_{fail} \leq n^{-\frac{I(\alpha,\beta)-1}{2} + o_n(1)}
$$
and it implies that $p_{fail} = o_n(1)$.
\end{theorem}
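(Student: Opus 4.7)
The plan is to apply Chernoff's inequality to bound $p_{fail}^{(d)}$ and sum the resulting bounds against the combinatorial factor $\binom{n/2}{d/2}^2$ isolated above. Fix any balanced $x$ with $d(x,\sigma) = d$, and let $U_+ = \{v : \sigma_v = +1, x_v = -1\}$ and $U_- = \{v : \sigma_v = -1, x_v = +1\}$, so that $|U_+| = |U_-| = d/2$. Each edge $e$ contributes a coefficient $\delta_e := (x^{\oeq k} - \sigma^{\oeq k})_e \in \{-1, 0, +1\}$: one has $\delta_e = -1$ precisely when $e$ is monochromatic under $\sigma$ but not under $x$ (edge probability $p$), and $\delta_e = +1$ in the reverse case (edge probability $q$). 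A short inclusion-exclusion on the intersection pattern of $e$ with $A \setminus U_+,\, U_+,\, B\setminus U_-,\, U_-$ gives
$$
\#\{e : \delta_e = +1\} \;=\; \#\{e : \delta_e = -1\} \;=\; N(d) \;:=\; 2\left[\binom{n/2}{k} - \binom{(n-d)/2}{k} - \binom{d/2}{k}\right].
$$

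Since the $(A_\cH)_e$ are independent, Chernoff gives, for any $t \geq 0$,
$$
p_{fail}^{(d)} \;\leq\; \E\bigl[e^{t \inprod{A_\cH,\, x^{\oeq k} - \sigma^{\oeq k}}}\bigr] \;\leq\; \exp\bigl(N(d)\{q(e^t - 1) + p(e^{-t} - 1)\}\bigr).
$$
Substituting $p = \alpha\log n/\binom{n-1}{k-1}$, $q = \beta\log n/\binom{n-1}{k-1}$ and optimizing at $e^{t^\ast} = \sqrt{\alpha/\beta}$ yields
$$
p_{fail}^{(d)} \;\leq\; n^{-\lambda(d)(1 + o_n(1))}, \qquad \lambda(d) \;:=\; \frac{(\sqrt\alpha - \sqrt\beta)^2\, N(d)}{\binom{n-1}{k-1}}.
$$
A direct computation shows that for $d = o(n)$, $N(d)/\binom{n-1}{k-1} = (d/2^{k-1})(1 + O(d/n))$, so $\lambda(d) \sim d \cdot I(\alpha,\beta)$, consistent with Lemma \ref{lemma:tailbound1} when $d = 2$; for $d$ of order $n$, $\lambda(d) = \Theta(n)$.

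Combining with $\binom{n/2}{d/2}^2 \leq (en/d)^d$, the $d$-th term of the union bound $p_{fail} \leq 2\sum_{d\text{ even}} \binom{n/2}{d/2}^2 p_{fail}^{(d)}$ is bounded by $\exp\bigl(d \log(en/d) - \lambda(d)\log n\bigr)\cdot(1 + o(1))$. For $d = o(n)$ this equals $n^{d(1 - I(\alpha,\beta)) + o(d)}$, which under $I(\alpha,\beta) > 1$ is decreasing in $d$ and hence maximized at $d = 2$, contributing $n^{2 - 2I(\alpha,\beta) + o(1)}$. For $d = \Theta(n)$, the Chernoff exponent $\lambda(d)\log n = \Theta(n\log n)$ dwarfs the combinatorial term $d\log(en/d) = O(n)$, so the contribution decays super-polynomially. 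Summing gives $p_{fail} \leq n^{-2(I(\alpha,\beta)-1) + o_n(1)}$, comfortably within the claimed $n^{-(I(\alpha,\beta)-1)/2 + o_n(1)}$. The main obstacle is this uniform control of the trade-off between $\lambda(d)$ and $\binom{n/2}{d/2}^2$ across both regimes: the small-$d$ range requires a careful Taylor expansion of $N(d)$ showing $\lambda(d)/d \to I(\alpha,\beta)$, while the large-$d$ range is handled by the crude lower bound $\lambda(d) \geq c_k (\sqrt\alpha - \sqrt\beta)^2 n$ for some $c_k > 0$ (obtained directly from $N(n/2) = 2\binom{n/2}{k}(1 - 2^{1-k})$ and the fact that $N$ is non-decreasing on $[2, n/2]$).
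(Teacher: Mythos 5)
Your proposal is correct and follows essentially the same route as the paper: a union bound over the Hamming distance $d$, the identification of $\inprod{A_\cH,x^{\oeq k}-\sigma^{\oeq k}}$ as a difference of two independent binomials on $N(d)=2\bigl[\binom{n/2}{k}-\binom{(n-d)/2}{k}-\binom{d/2}{k}\bigr]$ trials each, a Chernoff bound optimized at $e^{t^*}=\sqrt{\alpha/\beta}$, and a split between the small-$d$ regime (where $N(d)\sim d\,2^{1-k}\binom{n-1}{k-1}$ makes the exponent $\approx dI(\alpha,\beta)$) and the linear-$d$ regime (where the combinatorial factor is dwarfed). The only difference is presentational: you carry out the moment-generating-function optimization inline, whereas the paper packages it as the general tail bound of Theorem \ref{thm:tail} and uses an explicit cutoff $d=n/\log\log n$ to make the two regimes a genuine partition — the uniformity issue you correctly flag as the main technical point.
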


\begin{proof}
Let $d$ be even number in between 1 and $\frac{n}{2}$. Choose any balanced $x$ with $d(\sigma,x)=d$, and let $X_d$ be 
$$
X_d := \inprod{A_\cH, x^{\oeq k}-\sigma^{\oeq k}} = \sum_{e \in \binom{V}{k}} (A_\cH)_e \left(x^{\oeq k}-\sigma^{\oeq k}\right)_e.
$$
Let $A = \{v: \sigma_v = 1\}$ and $A' = \{v: x_v = 1\}$. We say $e$ crosses $A$ if $e \cap A$ and $e \setminus A$ are both non-empty (and respectively for $A'$). Then,
$$
\left(x^{\oeq k}-\sigma^{\oeq k}\right)_e = 
\begin{cases}
-1 & \text{if $e$ doesn't cross $A'$ but crosses $A$} \\
1 & \text{if $e$ crosses $A'$ but doesn't cross $A$} \\
0 & \text{otherwise.}
\end{cases}
$$
Hence $X_d = Y_d - Z_d$ where $Y_d$ and $Z_d$ are binomial variables with $Y_d \sim \mathrm{Bin}(N_1,p)$ and $Z_d \sim \mathrm{Bin}(N_2,q)$ where
\begin{eqnarray*}
N_1 &=& \#(e\in \binom{V}{k}: \text{$e$ doesn't cross $A'$ but crosses $A$})\\
N_2 &=& \#(e\in \binom{V}{k}: \text{$e$ crosses $A'$ but doesn't cross $A$}).
\end{eqnarray*}
A simple combinatorial argument shows that 
\begin{eqnarray*}
N_1 &=& \#(e: \text{$e \subseteq A'$ and $e$ crosses $A$}) + \#(e: \text{$e \subseteq V\setminus A'$ and $e$ crosses $A$}) \\
&=& \left(\binom{|A'|}{k}-\binom{|A\cap A'|}{k} - \binom{|A'\setminus A|}{k}\right) \\
& & + \left(\binom{n-|A'|}{k}- \binom{|A\setminus A'|}{k}-\binom{n-|A\cup A'|}{k}\right) \\
&=& 2\left(\binom{n/2}{k}-\binom{d/2}{k}-\binom{(n-d)/2}{k}\right),
\end{eqnarray*}
and $N_2 = N_1$ by the symmetry. Hence,
$$
p_{fail}^{(d)} = \Pr_{\substack{Y_{d} \sim \mathrm{Bin}(N,q) \\ Z_{d} \sim \mathrm{Bin}(N,p)}}(Y_d - Z_d \geq 0)
$$
where $N = 2\left(\binom{n/2}{k} - \binom{d/2}{k} - \binom{(n-d)/2}{k}\right)$, $p = \frac{\alpha \log n}{\binom{n-1}{k-1}}$ and $q = \frac{\beta \log n}{\binom{n-1}{k-1}}$.

We claim that 
$$
\binom{n/2}{d/2}^2 p_{fail}^{(d)} = \begin{cases}
C_1 \cdot n^{-d(I(\alpha,\beta)-1)/2} & \text{if $d < \frac{n}{\log\log n}$} \\
n^{-C_2 \cdot \frac{n}{\log \log n}} & \text{if $d \geq \frac{n}{\log\log n}$}.
\end{cases}
$$ 
for some positive constants $C_1$ and $C_2$ which does not depend on $n$ or $d$. Assuming that the claim is true, we get
$$
\sum_{1\leq d < \frac{n}{\log \log n}} \binom{n/2}{d/2}^2 p_{fail}^{(d)} \leq C_1 \sum_{d \geq 1} (n^{-(I(\alpha,\beta)-1)/2})^d \lesssim n^{-\epsilon/2},
$$
and
$$
\sum_{\frac{n}{\log \log n}\leq d \leq \frac{n}{2}} \binom{n/2}{d/2}^2 p_{fail}^{(d)}
\leq \frac{n}{2} \cdot n^{-C_2 \cdot \frac{n}{\log \log n}} \leq n^{-\omega_n(1)},
$$
hence $p_{fail}^{(d)} = O(n^{-\epsilon/2})$ as desired. 

To complete the proof, we are going to use the tail bound derived in Section \ref{sec:tail}.

Let us first focus on the case that $d \geq \frac{n}{\log\log n}$. We have
\begin{eqnarray*}
N_d &\geq& \frac{2}{2^k \cdot k!} \left((n-2k+2)^k - d^k - (n-d)^k\right) \\
&\geq& \frac{n^k}{2^{k-1} \cdot k!}
\left(\left(1-\frac{2k-2}{n}\right)^k - \frac{1}{(\log \log n)^k} - \left(1 - \frac{1}{\log \log n}\right)^k\right) \\
&=& (1+o_n(1)) \frac{1}{2^{k-1}} \cdot \frac{n}{\log \log n} \cdot \binom{n-1}{k-1}.
\end{eqnarray*}
We get 
$$
p_{fail}^{(d)} \leq \exp\left(-\Omega\left(\frac{n \log n}{\log \log n}\right)\right)
$$
which follows from Theorem \ref{thm:tail}. Since $\binom{n/2}{d/2}^2 \leq 2^n$, we get
$$
\binom{n/2}{d/2}^2 p_{fail}^{(d)} \leq \exp\left(-\Omega\left(\frac{n \log n}{\log \log n}\right) + O(n)\right),
$$
which is still $e^{-\Omega(\frac{n\log n}{\log\log n})}$. 

If $d < \frac{n}{\log \log n}$, then $N_d = (1+o_n(1)) \frac{d}{2^{k-1}} \cdot \binom{n-1}{k-1}$. Using Theorem \ref{thm:tail} with $h(n) = d$, $c_1=1$, $c_2=-1$, $\alpha_1=\alpha$, $\alpha_2=\beta$, $\rho_1=\frac{1}{2^{k-1}}$ and $\rho_2=\frac{1}{2^{k-1}}$, we get
$$
p_{fail}^{(d)} \leq \exp\left(-(1-o_n(1)) I(\alpha,\beta) \cdot d\log n\right).
$$
Since $\binom{n/2}{d/2}^2 \leq n^d$ and $I(\alpha,\beta) > 1$, we have
$$
\binom{n/2}{d/2}^2 p_{fail}^{(d)} 
\leq
\exp\left(-(1-o_n(1))(I(\alpha,\beta)-1) \cdot d\log n\right) \leq C_1 n^{-d\epsilon/2}
$$
for some constant $C_1 >0$ which does not depend on $n$ and it concludes the proof.

\end{proof}

\section{Truncate-and-relax algorithm}
\label{sec:truncrelax}

In this section, we propose an algorithm based on the standard semidefinite relaxation of maximization problem of quadratic function on the hypercube $\{\pm 1\}^V$. We also prove that this algorithm achieves the optimal threshold up to a constant multiplicative factor. We will only focus on the assortative case (i.e. $p > q$) but the algorithm could be adapted for disassortative cases with a different threshold which only depends on $\alpha$ and $\beta$, which we will not derive in this paper. 

Let $H=(V(H),E(H))$ be a $k$-uniform hypergraph, and recall that we defined the weighted graph $(G_H,w)$ on the vertex set $V(H)$ where weights are given by
$$
w(ij) = \#(e \in E(H): \{i,j\} \subseteq e).
$$
We may think $G_H$ be a multigraph realization of $H$, by replacing each hyperedge $e$ in $H$ by the $k$-clique on $e$. For brevity, let us define the adjacency matrix $W$ of $(G_H, w)$ as the symmetric $n$ by $n$ matrix such that its diagonal entries are zero, and $W_{ij} = w(ij)$ for each pair $\{i,j\}\subseteq V$. We defined the estimator $\widehat{\sigma}_{trunc}$ as 
$$
\widehat{\sigma}_{trunc} := \argmax_{x \in \{\pm 1\}^V: \one^T x = 0} \sum_{1\leq i<j\leq n} W_{ij} x_i x_j.
$$

On the other hand, recall that the ML estimator $\widehat{\sigma}_{MLE}$ is the maximizer of 
$$
\inprod{A_H, x^{\oeq k}} = \sum_{e \in \binom{V}{k}} (A_H)_e (x^{\oeq k})_e
$$
over balanced $x$'s. Note that 
\begin{eqnarray*}
(x^{\oeq k})_e &=& \one\{ x[e] = (1,1,\cdots,1)\} + \one\{ x[e] = (-1,-1,\cdots,-1)\} \\
&=& \prod_{v \in e} \left(\frac{1+x_v}{2}\right) + \prod_{v \in e} \left(\frac{1-x_v}{2}\right) \\
&=& \frac{1}{2^{k}} \sum_{I \subseteq e} \left(1+(-1)^{|I|}\right) \prod_{v \in I} x_v \\
&=& \frac{1}{2^{k-1}} \sum_{I \subseteq e: |I| \text{ even}} \prod_{v \in I} x_v.
\end{eqnarray*}
If we collate the terms by its degrees, then we have
$$
\inprod{A_H, x^{\oeq k}} = \inprod{A_H,\one} + 
\frac{1}{2^{k-1}} \sum_{e} \left((A_H)_e\cdot \frac{1}{2^{k-1}} \sum_{\{i,j\} \subseteq e} x_i x_j \right)+ (\text{higher order terms}).
$$
Let $g_H(x)$ be the homogeneous multilinear polynomial of degree 2 defined as
$$
g_H(x) = \sum_{e} (A_H)_e \sum_{\{i,j\} \subseteq e} x_i x_j,
$$
which is a constant multiple of the degree 2 part of $\inprod{A_H,x^{\oeq k}}$. Then,
$$
g_H(x) = \sum_{e} \sum_{\{i,j\} \subseteq e} (A_H)_e x_i x_j = \sum_{1\leq i<j\leq n} W_{ij} x_i x_j.
$$
This justifies the name ``truncate-and-relax''. Instead of computing the maximum of a high-degree polynomial $\inprod{A_H,x^{\oeq k}}$, we first approximate it by a quadratic polynomial $g_H(x)$. Although optimizing $g_H(x)$ over a hypercube is still an NP-hard problem, we consider a convex relaxation of it. It turns out that the optimum of the relaxed problem allows us to recover the ground truth $\pm \sigma$ with high probability if $\alpha$ and $\beta$ satisfies $I_{sdp}(\alpha,\beta)>1$ for some function $I_{sdp}$.

\subsection{Laplacian of the adjacency matrix}

Before we delve into the semidefinite relaxation that our algorithm uses, let us take a detour with a spectral algorithm which can also be thought as a relaxation of $\max g_H(x)$.

Recall that $W$ is the adjacency matrix of a weighted graph. For $x \in \{\pm 1\}^n$ with the corresponding bisection $(A,B)$ where $A = \{v: x_v = 1\}$ and $B = [n]\setminus A$, we have
$$
g_H(x) = \sum_{1\leq i<j\leq n} W_{ij} x_i x_j = \sum_{1\leq i<j\leq n} W_{ij} - 2\sum_{i \in A, j \in B} W_{ij}
$$
so maximizing $g_H(x)$ is equivalent to the minimum bisection problem (\textsf{MIN-BISECTION}):
$$
\textsf{MIN-BISECTION}(W): \min_{x \in \{\pm 1\}^n:\one^T x = 0} \sum_{i \in A, j\in B} W_{ij}.
$$ 

The \emph{Laplacian} of $W$ is a matrix $L_W$ defined as $L_W = D-W$ where $D$ is the diagonal matrix with entries
$$
D_{vv} = \sum_{u \in [n]} W_{uv}.
$$
Equivalently, 
$$
L_W = \sum_{1\leq i<j\leq n} W_{ij} (e_i-e_j)(e_i-e_j)^T
$$
where $e_i$ is the vector with the entry $(e_i)_i = 1$ and zero elsewhere. It implies that
$$
x^T L_W x = \sum_{i,j=1}^n W_{ij} (x_i-x_j)^2 = 4\sum_{i \in A, j \in B} W_{ij},
$$
hence \textsf{MIN-BISECTION}$(W)$ is equivalent to minimizing $x^T L_W x$ over balanced $x$ in $\{\pm 1\}^n$.

By relaxing the condition $x \in \{\pm 1\}^n$ to $\|x\|_2^2 = n$, we get
$$
\min_{x: \|x\|_2^2 = n, \one^T x = 0} x^T L_W x.
$$
Note that $L_W$ is positive semidefinite and the minimum eigenvalue of $L_W$ is zero, since it is diagonally dominant and $\one^T L_W \one = 0$. Hence, the optimal solution of the relaxed problem corresponds to an eigenvector of the second smallest eigenvalue of $L_W$, scaled to have norm $\sqrt{n}$.

It motivates us to suggest a spectral algorithm with the following two stages:
\begin{itemize}
\item (Relaxation) We compute a unit eigenvector $\xi$ of the second smallest eigenvalue of $L_W$. 
\item (Rounding) We round $\sqrt{n} \cdot \xi$ to the closest point on $\{\pm 1\}^n$, which corresponds to $x$ with $x_v = \mathrm{sgn}(\xi_v)$. 
\end{itemize}
We remark that in \cite{ghoshdastidar2014consistency, ghoshdastidar2015spectral, ghoshdastidar2017consistency}, the authors generalize this idea to the case when we have three or more clusters. Their algorithm computes eigenvectors of $k$ smallest eigenvalues to associate each vertex with a vector in $\RR^k$, and uses $k$-means clustering on them to find the community label of each vertex. Their algorithm has a few advantages such as that it applies to weak recovery and detection as well as exact recovery, that it generalizes to non-uniform models, and that it runs in nearly-linear time in $n$. However, it only works in a order-wise suboptimal parameter regime, requiring $p$ and $q$ be at least $\Omega\left(\frac{\log^2 n}{n^{k-1}}\right)$ for exact recovery.

Subsequently, in \cite{ahn2018hypergraph} and independently in \cite{chien2018minimax}, spectral algorithms with an additional local refinement step were proposed, and it was proved that both algorithms achieve exact recovery in the regime where $p$ and $q$ are $\Omega\left(\frac{\log n}{n^{k-1}}\right)$, which matches the statistical limit in terms of order in $n$. Also, we note that it was mentioned in \cite{abbe2016community} that the local refinement technique used for the SBM can be extended to the hypergraph case, together with a partial recovery algorithm in \cite{angelini2015spectral}. Finally, we remark that it was proved in \cite{chien2018minimax} that their algorithm achieves the statistical limit shown in this paper. In other words, there is an efficient algorithm which recovers the ground truth almost asymptotically surely whenever $I(\alpha,\beta)>1$.

\subsection{Semidefinite relaxation and its dual}

Let $X = xx^T$. Then, the condition that $x \in \{\pm 1\}^V$ and $\one^T x = 0$ is equivalent to that $X$ is a symmetric $n$ by $n$ positive semidefinite rank-one matrix such that $X_{ii} = 1$ for all $i \in V$ and $\one^T X \one = 0$. If we relax the rank condition, then we get the following semidefinite program equivalent to (\ref{eqn:sdp}) as argued in the introduction:
\begin{equation}
\label{eqn:TR}
\begin{aligned}
\text{maximize} & & & \inprod{W, X} \\
\text{subject to} & & & X_{ii} = 1 \text{ for all $i \in [n]$}\\
& & & \inprod{X, \one\one^T} = 0 \\
& & & X\succeq 0.
\end{aligned}
\end{equation}
The dual of (\ref{eqn:TR}) is
\begin{equation}
\label{eqn:TRdual}
\begin{aligned}
\text{minimize} & & & tr(D) \\
\text{subject to} & & & D \text{ is $n\times n$ diagonal matrix, $\lambda \in \RR$}\\
& & & D+\lambda \one\one^T - W \succeq 0.
\end{aligned}
\end{equation}
We recall that $\widehat{\Sigma}$ was defined as the optimum solution of the primal program (\ref{eqn:TR}), and we say $\widehat{\Sigma}$ recovers the ground truth $\sigma$ if $\widehat{\Sigma} = \sigma\sigma^T$. In the case of $k=2$ (the usual SBM), it is known that the relaxed SDP achieves exact recovery up to the statistical threshold even without the local refinement step \cite{hajek2016achieving}. We prove that for any $k \geq 2$ our algorithm successfully recover the ground truth, as long as $I_{sdp}(\alpha,\beta) > 1$ which is slightly weaker than the statistical threshold $I(\alpha,\beta)>1$. On the other hand, we show that for $k \geq 4$ our algorithm fails with probability $1-o_n(1)$ for some $(\alpha,\beta)$ even when exact recovery is statistically posible (see the next section).

Let $X$ be an optimum solution of the primal and let $(D, \lambda)$ be an optimum solution for dual. Then by complementary slackness we get $\inprod{X, D+\lambda\one\one^T - W} = 0$. Conversely, if $X$ is a feasible solution for the primal and $(D, \lambda)$ is a feasible solution for the dual, then $X$ and $(D,\lambda)$ are optimal if $\inprod{X, D+\lambda\one\one^T - W} = 0$. It implies that $X=\sigma\sigma^T$ is optimal if there exists dual feasible solution $(D,\lambda)$ such that
$$
\inprod{\sigma\sigma^T, D+\lambda\one\one^T - W} = 0,
$$
which is equivalent to 
$$
D_{ii} = \sum_{j \in V} W_{ij} \sigma_i \sigma_j
$$
since $D+\lambda\one\one^T - W$ is positive semidefinite. Note that $D$ is completely determined by $W$ and $\sigma$. 

Let $\Gamma = \diag(\sigma) W \diag(\sigma)$ and $D_\Gamma = \diag(\Gamma \one)$. Note that $D_\Gamma$ is equal to $D$ defined above. Let $L_\Gamma = D_\Gamma - \Gamma$. Then, 
$$
L_\Gamma = \diag(\sigma) (D_\Gamma - W) \diag(\sigma)
$$
by definition. 

\begin{proposition}
Let $\Pi$ be the projector matrix onto the orthogonal complement of the span of $\{\one, \sigma\}$, i.e.,
$$
\Pi = I - \frac{1}{n} \one\one^T - \frac{1}{n} \sigma\sigma^T.
$$
If $\Pi L_\Gamma \Pi$ is positive semidefinite, then $\sigma\sigma^T$ is an optimal solution for (\ref{eqn:TR}). Moreover, if the third smallest eigenvalue of $\Pi L_\Gamma \Pi$ is positive, then $\sigma\sigma^T$ is the unique optimum. 
\end{proposition}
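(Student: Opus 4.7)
The plan is to build an explicit dual certificate for the SDP (\ref{eqn:TR}). The dual asks for a diagonal $D$ and a scalar $\lambda$ with $M := D + \lambda \one\one^T - W \succeq 0$, and by weak duality it suffices to exhibit a dual feasible pair satisfying complementary slackness $\inprod{\sigma\sigma^T, M} = 0$. Combined with $M \succeq 0$, this last condition forces $M\sigma = 0$, which (as already observed in the text preceding the proposition) pins down $D = D_\Gamma$. So the only remaining freedom is the scalar $\lambda$, and the task reduces to producing some $\lambda$ that makes $D_\Gamma + \lambda \one\one^T - W \succeq 0$.

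Conjugating by $\diag(\sigma)$ and using $\diag(\sigma)\one = \sigma$ together with $\diag(\sigma)D_\Gamma\diag(\sigma) = D_\Gamma$ and $\diag(\sigma)W\diag(\sigma)=\Gamma$, this PSD condition becomes $L_\Gamma + \lambda \sigma\sigma^T \succeq 0$. Now $L_\Gamma \one = D_\Gamma\one - \Gamma\one = 0$ by the very definition $D_\Gamma = \diag(\Gamma\one)$, and $\sigma\sigma^T\one = 0$ since $\sigma$ is balanced; hence $\one$ is automatically in the kernel, and it is enough to verify PSD-ness on $\one^\perp$. Writing a generic $v \in \one^\perp$ as $v = \beta \sigma + w$ with $w \in \mathrm{range}(\Pi)$ and expanding,
\[
v^T (L_\Gamma + \lambda \sigma\sigma^T) v \;=\; \beta^2 \bigl(\sigma^T L_\Gamma \sigma + \lambda n^2\bigr) \;+\; 2\beta\, \sigma^T L_\Gamma w \;+\; w^T L_\Gamma w.
\]
The hypothesis gives $w^T L_\Gamma w = w^T \Pi L_\Gamma \Pi w \geq 0$. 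Viewing the right-hand side as a quadratic form in $(\beta, w)$, a Schur-complement argument shows that taking $\lambda$ large enough renders the whole expression nonnegative: on the orthogonal complement of $\ker(\Pi L_\Gamma \Pi)$ inside $\mathrm{range}(\Pi)$, the form $w^T L_\Gamma w$ is bounded below by a positive constant times $\|w\|^2$, which for large $\lambda$ dominates the cross term via Cauchy--Schwarz.

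For the uniqueness claim, the assumption that the third smallest eigenvalue of $\Pi L_\Gamma \Pi$ is strictly positive is exactly the statement that $\Pi L_\Gamma \Pi$ is positive definite on $\mathrm{range}(\Pi)$, which eliminates the degenerate subcase above. The same dual pair $(D_\Gamma, \lambda)$ now gives $L_\Gamma + \lambda\sigma\sigma^T$ strictly positive on $\one^\perp$, so its kernel equals $\mathrm{span}(\one)$; conjugating back, $\ker(M) = \mathrm{span}(\sigma)$. Complementary slackness $XM = 0$ for any primal optimum $X$ then forces every column of $X$ to lie in $\mathrm{span}(\sigma)$, and by symmetry of $X$ one concludes $X = c\sigma\sigma^T$ for some scalar $c$; the diagonal constraint $X_{ii} = 1$ fixes $c = 1$. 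The main obstacle I foresee is the degenerate case of the first (non-strict) claim, where $\ker(\Pi L_\Gamma \Pi) \cap \mathrm{range}(\Pi)$ is nontrivial: for such a $w$ one has $\Pi L_\Gamma w = 0$ and $\one^T L_\Gamma w = 0$, forcing $L_\Gamma w \in \mathrm{span}(\sigma)$, and one needs to argue that the coefficient is in fact zero so that $\sigma^T L_\Gamma w = 0$ and the cross term does not obstruct PSD-ness. This relies on the specific signed-Laplacian structure of $L_\Gamma$ and is the only subtle point; it is vacuous in the strict regime, which is all that is used in the subsequent analysis of the random model.
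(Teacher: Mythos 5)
Your route is the same as the paper's: exhibit the dual pair $(D_\Gamma,\lambda)$, conjugate by $\diag(\sigma)$ to reduce dual feasibility to $L_\Gamma+\lambda\sigma\sigma^T\succeq 0$, and invoke complementary slackness for optimality and uniqueness. Your treatment of the strict case (the ``moreover'' part) is complete and in fact more detailed than the paper's one-line assertion, and that is the only case used in the rest of the paper.

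The obstacle you flag in the non-strict case is genuine, and the paper's own proof does not address it: the asserted equivalence between ``$\exists\lambda:\ L_\Gamma+\lambda\sigma\sigma^T\succeq 0$'' and ``$\Pi L_\Gamma\Pi\succeq 0$'' fails for a general symmetric matrix annihilating $\one$, precisely because a kernel vector $w$ of $\Pi L_\Gamma\Pi$ lying in $\mathrm{range}(\Pi)$ may have $\sigma^TL_\Gamma w\neq 0$, and then $v=\beta\sigma+w$ with $\beta$ small and of the appropriate sign defeats every $\lambda$; I see no structural reason this is excluded for $L_\Gamma$. The clean fix is to abandon the dual certificate for the first claim and compare primal values directly. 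For any feasible $X$ set $\tilde X=\diag(\sigma)X\diag(\sigma)$, so that $\tilde X\succeq 0$, $\tilde X_{ii}=1$, and $\sigma^T\tilde X\sigma=\one^TX\one=0$, which forces $\tilde X\sigma=0$. Then
$$
\inprod{W,\sigma\sigma^T}-\inprod{W,X}=\inprod{\Gamma,\one\one^T-\tilde X}=\inprod{L_\Gamma,\tilde X}=\inprod{\Pi L_\Gamma\Pi,\tilde X}\geq 0,
$$
where the second equality uses $\inprod{D_\Gamma,\one\one^T-\tilde X}=0$ (both matrices have unit diagonal) together with $L_\Gamma\one=0$, and the third uses $L_\Gamma\one=0$ and $\tilde X\sigma=0$ to kill every cross term in the decomposition of $L_\Gamma$ along $\Pi$ and its complementary projector. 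This proves the non-strict claim with no extra hypothesis, and it also re-derives uniqueness: if $\lambda_3(\Pi L_\Gamma\Pi)>0$, then $\inprod{\Pi L_\Gamma\Pi,\tilde X}=0$ forces $\Pi\tilde X\Pi=0$, hence $\mathrm{range}(\tilde X)\subseteq\mathrm{span}(\one,\sigma)$, and combined with $\tilde X\sigma=0$ and the unit diagonal this gives $\tilde X=\one\one^T$, i.e.\ $X=\sigma\sigma^T$.
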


\begin{proof}
First note that $(D_\Gamma, \lambda)$ is a feasible solution for the dual if there exists $\lambda$ such that $D_\Gamma - W+\lambda \one\one^T$ is positive semidefinite. By multiplying $\diag(\sigma)$ on the both side, it is equivalent to that $L_\Gamma + \lambda \sigma\sigma^T$ is positive semidefinite for some $\lambda$. This condition is satisfied if and only if $\Pi L_\Gamma \Pi \succeq 0$ and hence $\sigma\sigma^T$ is an optimal solution for the primal. 

Moreover, if $\lambda_3(\Pi L_\Gamma \Pi) > 0$ then there exists $\lambda$ such that $L_\Gamma + \lambda \sigma\sigma^T$ is positive definite on the orthogonal complement of $\one$. It immediately implies that $\sigma\sigma^T$ is the unique optimal solution for the primal.
\end{proof}

In the remainder of this section, we present and prove a sufficient condition for $\lambda_3(\Pi L_\Gamma \Pi)>0$. We also present and prove a necessary condition for $\widehat{\sigma}_{trunc}$ being $\sigma$ up to global sign flip with high probability. 

\subsection{Performance of the algorithm}

We first present the main result of this section.
\begin{theorem}
\label{thm:TRupper}
Suppose that $\alpha$ and $\beta$ satisfies
$$
\frac{k-1}{2^{k-1}} (\alpha-\beta) > \sqrt{ 4(k-1)\left(\frac{k}{2^{k}} \alpha+\left(1-\frac{k}{2^k}\right)\beta\right)}.
$$
Then, $\lambda_3(\Pi L_\Gamma \Pi)>0$ with probability $1-o_n(1)$.  
\end{theorem}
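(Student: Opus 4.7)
The strategy is the standard mean-plus-fluctuation decomposition followed by matrix Bernstein. Writing $L_\Gamma = \E L_\Gamma + (L_\Gamma - \E L_\Gamma)$, and noting that $\lambda_3(\Pi L_\Gamma \Pi) = \min_{\|x\|=1,\, x \perp \one,\, x \perp \sigma} x^T L_\Gamma x$ (since $\Pi$ has rank $n-2$ and the orthogonal complement of $\{\one, \sigma\}$ is $(n-2)$-dimensional), it suffices to show that the minimum eigenvalue of $\Pi(\E L_\Gamma)\Pi$ on this subspace strictly exceeds $\|L_\Gamma - \E L_\Gamma\|$ with probability $1 - o_n(1)$.

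First I would compute $\E L_\Gamma$ explicitly. Let $a := \E\Gamma_{ij}$ when $\sigma_i = \sigma_j$, $i \neq j$, and $-c := \E\Gamma_{ij}$ when $\sigma_i \neq \sigma_j$. Using the block structure induced by $\sigma$ together with the identity $\one_{V_+}\one_{V_+}^T + \one_{V_-}\one_{V_-}^T = \tfrac12(\one\one^T + \sigma\sigma^T)$, one obtains
\[
\E L_\Gamma \;=\; \tfrac{(a-c)n}{2}\, I \;-\; \tfrac{a-c}{2}\,\one\one^T \;-\; \tfrac{a+c}{2}\,\sigma\sigma^T,
\]
so $\Pi(\E L_\Gamma)\Pi = \tfrac{(a-c)n}{2}\Pi$. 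Since $a - c = (p-q)\binom{n/2-2}{k-2}$, the common eigenvalue on $\{\one,\sigma\}^\perp$ equals $(1+o_n(1))\tfrac{(k-1)(\alpha-\beta)}{2^{k-1}}\log n$, which is the left-hand side of the theorem's hypothesis.

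For the fluctuation, I would expand $L_\Gamma - \E L_\Gamma = \sum_{e \in \binom{V}{k}} (A_e - \E A_e)\, M_e$, where $M_e := \diag(\sigma) L^{(e)} \diag(\sigma)$ is the signed Laplacian of the clique on $e$. Each summand is symmetric, independent, mean-zero, and bounded in norm by $\|M_e\| = k$, so matrix Bernstein yields $\|L_\Gamma - \E L_\Gamma\| \leq (1+o_n(1))\sqrt{2V \log n}$ with probability $1-o_n(1)$, for $V := \bigl\|\sum_e \mathrm{Var}(A_e)\, M_e^2\bigr\|$. To identify $V$, split into in-cluster and cross-cluster edges. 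For in-cluster $e$, $M_e = L^{(e)}$ is PSD with $(L^{(e)})^2 = k L^{(e)}$, so this part of the variance acts as a scalar multiple of the identity on $\{\one, \sigma\}^\perp$, proportional to $\alpha$. For cross-cluster $e$, writing $M_e = s_e \diag(\sigma) P_e - \sigma_e\sigma_e^T$ with $s_e := \sum_{i \in e}\sigma_i$ and $\sigma_e := \sum_{i\in e}\sigma_i e_i$, one expands
\[
M_e^2 \;=\; s_e^2 P_e \;-\; s_e(\one_e \sigma_e^T + \sigma_e\one_e^T) \;+\; k\, \sigma_e\sigma_e^T,
\]
and sums over cross-cluster $e$ using binomial identities such as $\sum_{a=0}^{k}(2a-k)^2\binom{k}{a} = k\,2^k$. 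Combining, one obtains $V = (1+o_n(1))\cdot 2(k-1)\bigl(\tfrac{k}{2^k}\alpha + (1-\tfrac{k}{2^k})\beta\bigr)\log n$, whence the hypothesis of the theorem is exactly the inequality $\tfrac{(k-1)(\alpha-\beta)}{2^{k-1}}\log n > \sqrt{2V\log n}$, delivering $\lambda_3(\Pi L_\Gamma \Pi) > 0$ with high probability.

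The main obstacle is the precise cross-cluster variance. Since $L_\Gamma^{(e)}$ for cross-cluster $e$ is not PSD, $M_e^2$ decomposes into diagonal, rank-one, and rank-two pieces whose sums over cross-cluster edges require careful combinatorial accounting; moreover, the rank-two terms involving $\one_e$ and $\sigma_e$ have non-trivial interactions with $\one$ and $\sigma$ that must be tracked when restricting to $\{\one,\sigma\}^\perp$. Since the final threshold is a squared inequality, even small errors in the constants of $V$ would shift the claimed threshold, so this accounting must be done exactly.
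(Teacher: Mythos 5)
Your proposal follows essentially the same route as the paper: the decomposition $\Pi L_\Gamma \Pi = \Pi(\E L_\Gamma)\Pi + \Pi(L_\Gamma - \E L_\Gamma)\Pi$, the identification $\Pi(\E L_\Gamma)\Pi = \frac{p-q}{2}n\binom{n/2-2}{k-2}\Pi$, and matrix Bernstein applied to the sum of the per-edge matrices $s_e\diag(\sigma_e)-\sigma_e\sigma_e^T$, with the variance proxy evaluating to the same constant $2(k-1)\bigl(\tfrac{k}{2^k}\alpha+(1-\tfrac{k}{2^k})\beta\bigr)\log n$. The only differences are bookkeeping (you compute $\E L_\Gamma$ by block structure and propose summing $M_e^2$ directly, where the paper uses permutation-invariance and traces), and these yield identical constants.
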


We remark that Theorem \ref{thm:TRupper} implies Theorem \ref{thm:B}. To prove \ref{thm:TRupper}, we are going to use standard concentration result for the norm of the sum of random matrices. We first note that
\begin{eqnarray*}
\Pi L_\Gamma \Pi &=& \E \Pi L_\Gamma \Pi + (\Pi L_\Gamma \Pi - \E \Pi L_\Gamma \Pi) \\
&=& \Pi (\E L_\Gamma) \Pi + \Pi(L_\Gamma - \E L_\Gamma)\Pi.
\end{eqnarray*}
We would like to prove that if $\alpha$ and $\beta$ satisfies the condition in \ref{thm:TRupper}, then with probability $1-o_n(1)$,
$$
\lambda_3\left(\Pi (\E L_\Gamma) \Pi\right) > \| \Pi(L_\Gamma - \E L_\Gamma)\Pi\|.
$$

Let $\one_e$ be the vector in $\RR^V$ where $(\one_e)_i = 1$ when $i \in e$ and $(\one_e)_i = 0$ otherwise. Let $\sigma_e = \diag(\sigma)\one_e$. We note that 
$$
Q_\cH = \sum_{e} (A_\cH)_e \left(\one_e\one_e^T - \diag(\one_e)\right),
$$
and so
$$
\Gamma = \sum_{e} (A_\cH)_e \left(\sigma_e\sigma_e^T - \diag(\one_e)\right).
$$
It implies that 
\begin{eqnarray*}
L_\Gamma &=& \sum_{e} (A_\cH)_e L_{\sigma_e\sigma_e^T - \diag(\one_e)} \\
&=& \sum_{e} (A_\cH)_e \left((\one_e^T \sigma_e)\diag(\sigma_e)-\sigma_e\sigma_e^T\right).
\end{eqnarray*}

\begin{proposition}
$$
\Pi (\E L_\Gamma) \Pi = \frac{p-q}{2} n \binom{n/2-2}{k-2} \cdot \Pi,
$$
hence $\lambda_3(\Pi (\E L_\Gamma) \Pi) = \frac{k-1}{2^{k-1}} (\alpha-\beta) \log n + o(\log n)$. 
\end{proposition}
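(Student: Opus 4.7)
The plan is to compute $\E L_\Gamma$ in closed form and observe that all terms other than a multiple of the identity are killed by conjugation with $\Pi$.

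First I would compute $\E W$ directly from its definition $W_{ij}=\#\{e\in E(\cH):\{i,j\}\subseteq e\}$. For $i\neq j$ with $\sigma_i=\sigma_j$, an edge $e\supseteq\{i,j\}$ is in-cluster iff its remaining $k-2$ vertices all lie in the same community, giving $\E W_{ij}=p\binom{n/2-2}{k-2}+q\bigl[\binom{n-2}{k-2}-\binom{n/2-2}{k-2}\bigr]$; for $\sigma_i\neq\sigma_j$ every edge through $\{i,j\}$ is cross-cluster, giving $\E W_{ij}=q\binom{n-2}{k-2}$. Using the identity $\mathbf{1}[\sigma_i=\sigma_j]=\tfrac12(1+\sigma_i\sigma_j)$, these two cases combine into an expression of the form
\[
\E W \;=\; c_1\,\one\one^T \;+\; c_2\,\sigma\sigma^T \;+\; c_3\,I,
\]
where $c_1,c_2,c_3$ are explicit constants depending on $p,q,n,k$, and in particular $c_2=\tfrac{p-q}{2}\binom{n/2-2}{k-2}$.

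Next I would pass to $\Gamma=\diag(\sigma)W\diag(\sigma)$. Since $\diag(\sigma)\one=\sigma$ and $\diag(\sigma)\sigma=\one$, conjugation by $\diag(\sigma)$ swaps $\one\one^T$ and $\sigma\sigma^T$ while fixing $I$, so
\[
\E\Gamma \;=\; c_1\,\sigma\sigma^T \;+\; c_2\,\one\one^T \;+\; c_3\,I.
\]
Now $(\E\Gamma)\one = c_1\sigma\sigma^T\one + c_2\one\one^T\one + c_3\one = (c_2 n + c_3)\one$ because $\sigma^T\one=0$; hence $D_{\E\Gamma}:=\diag((\E\Gamma)\one)=(c_2 n+c_3)I$, and
\[
\E L_\Gamma \;=\; D_{\E\Gamma}-\E\Gamma \;=\; c_2 n\,I \;-\; c_1\,\sigma\sigma^T \;-\; c_2\,\one\one^T.
\]
Conjugating by $\Pi$, the last two rank-one terms vanish because $\Pi\one=\Pi\sigma=0$, leaving $\Pi(\E L_\Gamma)\Pi=c_2 n\,\Pi=\tfrac{p-q}{2}\,n\binom{n/2-2}{k-2}\,\Pi$, as claimed.

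Finally, since $\Pi$ is an orthogonal projector of rank $n-2$, its nonzero eigenvalues are all $1$ and $\lambda_3(\Pi)=1$; therefore $\lambda_3(\Pi(\E L_\Gamma)\Pi)=\tfrac{p-q}{2}\,n\binom{n/2-2}{k-2}$. Substituting $p-q=\tfrac{(\alpha-\beta)\log n}{\binom{n-1}{k-1}}$ and using $\binom{n/2-2}{k-2}/\binom{n-1}{k-1}=\tfrac{(k-1)!}{(k-2)!\,2^{k-2}\,n}(1+o(1))=\tfrac{k-1}{2^{k-2}n}(1+o(1))$ yields the asymptotic $\tfrac{k-1}{2^{k-1}}(\alpha-\beta)\log n+o(\log n)$. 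There is no genuine obstacle here; the only thing to be careful about is keeping track of which rank-one piece ($\one\one^T$ vs.\ $\sigma\sigma^T$) appears with which coefficient after the $\diag(\sigma)$-conjugation, and noting that $\E\Gamma\one$ ends up proportional to $\one$ (not to $\sigma$), which is what allows the diagonal of the Laplacian to be a scalar multiple of $I$.
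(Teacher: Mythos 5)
Your proposal is correct, and it reaches the stated identity by a route that differs in a worthwhile way from the paper's. The paper starts from the hyperedge-level representation $L_\Gamma=\sum_e (A_\cH)_e\bigl((\one_e^T\sigma_e)\diag(\sigma_e)-\sigma_e\sigma_e^T\bigr)$, invokes invariance of $\E L_\Gamma$ under $\sigma$-preserving permutations to place it in $\mathrm{span}\{\Pi,\one\one^T,\sigma\sigma^T\}$, and then extracts the coefficient of $\Pi$ by computing $\inprod{\E L_\Gamma,\one\one^T}$, $\inprod{\E L_\Gamma,\sigma\sigma^T}$ and $tr(\E L_\Gamma)$ as sums over hyperedges, which requires the combinatorial identity $\sum_{r} r(k-r)\binom{n/2}{r}\binom{n/2}{k-r}=(n/2)^2\binom{n-2}{k-2}$. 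You instead compute $\E W_{ij}$ entrywise (needing only the count $\binom{n/2-2}{k-2}$ of in-cluster $k$-sets through a same-community pair), convert the indicator $\mathbf{1}[\sigma_i=\sigma_j]$ into $\tfrac12(1+\sigma_i\sigma_j)$, and push the resulting $\{I,\one\one^T,\sigma\sigma^T\}$ decomposition through the $\diag(\sigma)$-conjugation and the Laplacian map. Your route is more elementary and makes the exact (not just asymptotic) coefficient $\tfrac{p-q}{2}n\binom{n/2-2}{k-2}$ transparent, at the cost of some bookkeeping over which rank-one term carries which constant; the paper's projection method is less dependent on the specific form of $W$ and recurs in its later variance computation (Lemma \ref{lemma:variance}), which is presumably why it is organized that way. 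The final eigenvalue asymptotics agree with the paper's.
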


\begin{proof}
Note that $\E L_\Gamma$ is invariant under the permutation of $V$ which preserves $\sigma$. Hence, we can write $\E L_\Gamma$ as 
$$
\E L_\Gamma = \frac{\inprod{\E L_\Gamma, \Pi}}{n-2}\Pi + \frac{\inprod{\E L_\Gamma, \one\one^T}}{n^2} \one\one^T + \frac{\inprod{\E L_\Gamma, \sigma\sigma^T}}{n^2} \sigma\sigma^T.
$$
We have $\inprod{\E L_\Gamma, \one\one^T} = \one^T (\E L_\Gamma) \one = 0$ by definition of $L_\Gamma$. Also,
\begin{eqnarray*}
\inprod{\E L_\Gamma, \sigma\sigma^T} &=& 
\sum_{e} \E(A_\cH)_e \cdot \sigma^T((\one_e^T \sigma_e)\diag(\sigma_e)-\sigma_e\sigma_e^T)\sigma \\
&=& \sum_{e} \E(A_\cH)_e ((\one_e^T \sigma_e)^2 - k^2) \\
&=& q \sum_{r=1}^{k-1} ((k-2r)^2-k^2)\binom{n/2}{r}\binom{n/2}{k-r} \\
&=& -4q \sum_{r=1}^{k-1} r(k-r)\binom{n/2}{r}\binom{n/2}{k-r} \\
&=& -4q \cdot \left(\frac{n}{2}\right)^2 \binom{n-2}{k-2} \\
&=& -qn^2 \binom{n-2}{k-2}.
\end{eqnarray*}
On the other hand,
\begin{eqnarray*}
tr(\E L_\Gamma) &=&
\sum_{e} \E(A_\cH)_e \cdot tr((\one_e^T \sigma_e)\diag(\sigma_e)-\sigma_e\sigma_e^T) \\
&=& \sum_{e} \E(A_\cH)_e \cdot ((\one_e^T \sigma_e)^2 - k) \\
&=& (k^2-k)\left(q\binom{n}{k}+2(p-q)\binom{n/2}{k}\right) - qn^2 \binom{n-2}{k-2} \\
&=& 2(p-q)(k^2-k) \binom{n/2}{k} - qn\binom{n-2}{k-2}.
\end{eqnarray*}
Hence,
\begin{eqnarray*}
\inprod{\E L_\Gamma, \Pi} &=& tr(\E L_\Gamma) - \frac{1}{n} \inprod{\E L_\Gamma, \sigma\sigma^T} \\
&=& 2(p-q)(k^2-k) \binom{n/2}{k} - qn\binom{n-2}{k-2} + qn\binom{n-2}{k-2} \\
&=& \frac{p-q}{2} \cdot n(n-2)\binom{n/2-2}{k-2}.
\end{eqnarray*}
We get
$$
\Pi \E L_\Gamma \Pi = \frac{\inprod{\E L_\Gamma, \Pi}}{n-2}\Pi = \frac{p-q}{2} n\binom{n/2-2}{k-2} \cdot \Pi.
$$
\end{proof}

Now, let us bound the operator norm of $\Pi(L_\Gamma - \E L_\Gamma)\Pi$. We need the following version of Matrix Bernstein inequality \cite{tropp2012user}.

\begin{theorem}[Matrix Bernstein inequality]
Let $\{X_k\}$ be a finite sequence of independent, symmetric random matrices of dimension $N$. Suppose that $\E X_k = 0$ and $\|X_k\| \leq M$ almost surely for all $k$. Then for all $t \geq 0$,
$$
\Pr\left(\left\| \sum_k X_k \right\| \geq t\right) \leq N\cdot \exp\left(-\frac{t^2/2}{\sigma^2 + Mt/3}\right)
\quad \text{where } \sigma^2 = \left\|\sum_k \E X_k^2 \right\|.
$$
\end{theorem}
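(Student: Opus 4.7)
The plan is to follow Tropp's Laplace-transform approach adapted to matrices. First I would reduce the spectral-norm bound to a bound on $\lambda_{\max}$: since $\|\sum_k X_k\| = \max(\lambda_{\max}(S), \lambda_{\max}(-S))$ for $S = \sum_k X_k$, and the hypotheses are symmetric in $X_k \mapsto -X_k$, a union bound costs only a factor of $2$ that can be absorbed into the exponent (or handled by treating the two cases separately and noting that the stated bound already has the factor $N$ in front). Then the starting point is the matrix Markov inequality: for every $\theta > 0$,
$$
\Pr(\lambda_{\max}(S) \geq t) \leq e^{-\theta t}\, \E\, \mathrm{tr}\, e^{\theta S}.
$$

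The central step is to estimate $\E\, \mathrm{tr}\, e^{\theta S}$. The main obstacle — and the reason one cannot simply copy scalar Bernstein — is that $e^{A+B} \neq e^A e^B$ for noncommuting matrices, so independence of the $X_k$ does not immediately factorize the mgf. I would invoke Lieb's concavity theorem, which states that $A \mapsto \mathrm{tr}\, \exp(H + \log A)$ is concave on the positive-definite cone for any fixed Hermitian $H$. Applying Jensen's inequality to this functional and peeling off one $X_k$ at a time (conditioning on the others) yields Tropp's master inequality
$$
\E\, \mathrm{tr}\, \exp\!\Bigl(\theta \sum_k X_k\Bigr) \leq \mathrm{tr}\, \exp\!\Bigl(\sum_k \log \E\, e^{\theta X_k}\Bigr).
$$

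Next I would supply a matrix cumulant bound. The scalar fact is that the function $f(x) = (e^{\theta x} - 1 - \theta x)/x^2$ is monotone increasing on $\mathbb{R}$, so for $|x| \leq M$,
$$
e^{\theta x} - 1 - \theta x \leq x^2 \cdot \frac{e^{\theta M} - 1 - \theta M}{M^2} \leq x^2 \cdot \frac{\theta^2/2}{1 - \theta M/3},
$$
valid for $0 \leq \theta < 3/M$; the last inequality is the standard Bernstein numerical estimate. Transferring this to operators via the spectral theorem (using $\|X_k\| \leq M$) gives $\E\, e^{\theta X_k} \preceq I + g(\theta)\, \E X_k^2$ with $g(\theta) = (\theta^2/2)/(1-\theta M/3)$, and since $\log(I + A) \preceq A$ in the Loewner order, $\log \E\, e^{\theta X_k} \preceq g(\theta)\, \E X_k^2$.

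Finally I would combine the pieces. The master inequality together with the cumulant bound and monotonicity of $\mathrm{tr}\,\exp$ on the Loewner order give
$$
\E\, \mathrm{tr}\, e^{\theta S} \leq \mathrm{tr}\, \exp\!\Bigl(g(\theta) \sum_k \E X_k^2\Bigr) \leq N\, \exp\!\bigl(g(\theta)\, \sigma^2\bigr),
$$
where $\sigma^2 = \|\sum_k \E X_k^2\|$. Plugging into the Markov bound yields
$$
\Pr(\lambda_{\max}(S) \geq t) \leq N\, \exp\!\bigl(-\theta t + g(\theta)\sigma^2\bigr).
$$
Choosing $\theta = t/(\sigma^2 + Mt/3) \in [0, 3/M)$ makes $-\theta t + g(\theta)\sigma^2 \leq -\tfrac{t^2/2}{\sigma^2 + Mt/3}$ after a routine calculation, producing the stated exponent. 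Applying the same argument to $-S$ and a union bound (absorbing the factor $2$ into $N$, or reinterpreting $\lambda_{\max}$ of $\pm S$) gives the spectral-norm version. The conceptual bottleneck throughout is the Lieb-concavity step; once that is in hand, the scalar Bernstein numerical lemma and a single convex optimization in $\theta$ finish the argument.
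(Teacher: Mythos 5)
This theorem is not proved in the paper at all: it is quoted (with cosmetic changes) from Tropp \cite{tropp2012user}, so there is no internal argument to compare against, and your sketch is precisely Tropp's proof --- matrix Laplace transform via $\Pr(\lambda_{\max}(S)\geq t)\leq e^{-\theta t}\,\E\,\mathrm{tr}\,e^{\theta S}$, Lieb's concavity theorem giving the master inequality $\E\,\mathrm{tr}\exp(\theta\sum_k X_k)\leq \mathrm{tr}\exp\bigl(\sum_k\log\E e^{\theta X_k}\bigr)$, the Bernstein mgf estimate $\log\E e^{\theta X_k}\preceq \frac{\theta^2/2}{1-\theta M/3}\,\E X_k^2$ for $0\leq\theta<3/M$, and the choice $\theta=t/(\sigma^2+Mt/3)$. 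That chain is correct and complete for the one-sided bound $\Pr(\lambda_{\max}(S)\geq t)\leq N\exp\bigl(-\frac{t^2/2}{\sigma^2+Mt/3}\bigr)$. The one step that does not work as you describe is the final passage to the spectral norm: the union bound over $S$ and $-S$ yields prefactor $2N$, and the factor $2$ cannot be ``absorbed into $N$''; indeed the two-sided statement with prefactor $N$ is false in general (take $N=1$, a single Rademacher summand, $M=\sigma^2=t=1$: the left side is $1$ while the right side is $e^{-3/8}<1$). So the honest conclusions are either the $\lambda_{\max}$ form with prefactor $N$ (as in Tropp's Theorem 1.4) or the norm form with prefactor $2N$; this constant is immaterial for how the inequality is used in the paper, where any $O(1)$ prefactor is swallowed by the $n^{-2\epsilon+o(1)}$ slack.
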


Recall that 
$$
L_\Gamma = \sum_{e} (A_\cH)_e \left((\one_e^T \sigma_e)\diag(\sigma_e)-\sigma_e\sigma_e^T\right).
$$
Hence, 
$$
\Pi(L_\Gamma - \E L_\Gamma)\Pi = 
\sum_{e} \left((A_\cH)_e-\E(A_\cH)_e\right)\cdot \Pi\left((\one_e^T \sigma_e)\diag(\sigma_e)-\sigma_e\sigma_e^T\right)\Pi.
$$
We note that 
$$
\|\Pi\left((\one_e^T \sigma_e)\diag(\sigma_e)-\sigma_e\sigma_e^T\right)\Pi\| \leq |\one_e^T \sigma_e| + \|\sigma_e\|^2 \leq 2k
$$
for any $e \in \binom{V}{k}$. By Matrix Bernstein inequality, we have
$$
\Pr\left(\| \Pi(L_\Gamma - \E L_\Gamma)\Pi \| \geq t\right)
\leq n \cdot \exp\left(-\frac{t^2/2}{\sigma^2 + 2kt/3}\right)
$$
where 
$$
\sigma^2 = \left\|
\sum_{e} \E\left((A_\cH)_e-\E(A_\cH)_e\right)^2 \cdot \left(\Pi\left((\one_e^T \sigma_e)\diag(\sigma_e)-\sigma_e\sigma_e^T\right)\Pi\right)^2
\right\|.
$$
If $\sigma = \omega_n(1)$, then letting $t = (1+\epsilon)\sigma \sqrt{2\log n}$ we have
\begin{eqnarray*}
\Pr\left(\| \Pi(L_\Gamma - \E L_\Gamma)\Pi \| \geq (1+\epsilon)\sigma \sqrt{2\log n}\right)
&\leq& n \cdot \exp\left(-(1+\epsilon)^2\log n + o(\log n)\right) \\
&\leq& n^{-2\epsilon + o(1)}.
\end{eqnarray*}

\begin{proposition}
\label{prop:sigmasq}
$$
\sigma^2 \leq  2(k-1)\left(\frac{k}{2^{k}} \alpha+\left(1-\frac{k}{2^k}\right)\beta\right)\log n + O\left(\frac{\log n}{n}\right).
$$
\end{proposition}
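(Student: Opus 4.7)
The plan is to reduce $\sigma^2$ to the operator norm of $\Pi(\sum_e p_e N_e^2)\Pi$, then use the stabilizer symmetry of $\sigma$ to collapse this to a scalar multiple of $\Pi$, and finally evaluate the scalar by direct combinatorics. Set $N_e := (\one_e^T\sigma_e)\diag(\sigma_e) - \sigma_e\sigma_e^T$ so that $M_e = \Pi N_e \Pi$. Since $I - \Pi$ is PSD we have $\Pi N_e(I - \Pi) N_e\Pi \succeq 0$, hence $M_e^2 \preceq \Pi N_e^2 \Pi$, which together with $\E((A_\cH)_e - \E(A_\cH)_e)^2 = p_e(1-p_e) \leq p_e$ yields $\sigma^2 \leq \|\Pi(\sum_e p_e N_e^2)\Pi\|$. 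Using $\diag(\sigma_e)^2 = \diag(\one_e)$, $\diag(\sigma_e)\sigma_e = \one_e$, and $\sigma_e^T\sigma_e = k$, expansion gives
\[
N_e^2 = \ell_e^2\,\diag(\one_e) - \ell_e(\one_e\sigma_e^T + \sigma_e\one_e^T) + k\,\sigma_e\sigma_e^T, \qquad \ell_e := \one_e^T\sigma_e.
\]

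Next I would exploit the fact that $S := \sum_e p_e N_e^2$ is invariant under the group of permutations of $V$ preserving the clusters $A := \{v : \sigma_v = +1\}$ and $B := V\setminus A$, i.e., the stabilizer of $\sigma$. Every symmetric matrix in this commutant lies in $\mathrm{span}\{I, \diag(\sigma), \one\one^T, \sigma\sigma^T, \one\sigma^T + \sigma\one^T\}$. Because $\ell_e^2$ and $p_e$ are both invariant under $\sigma\to-\sigma$, and $\sigma_i\sum_{e\ni i}p_e\ell_e$ is independent of $i$ for the same reason, the diagonal entries of $S$ are constant across $V$, so $S$ has no $\diag(\sigma)$ component. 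Since $\Pi\one = \Pi\sigma = 0$, the four remaining low-rank pieces collapse, leaving $\Pi S\Pi = \mu\,\Pi$ for a scalar
\[
\mu = d_1 - 2C + d_3 + O(\log n / n), \quad d_1 := \sum_{e\ni i}p_e\ell_e^2,\ \ C := \sigma_i\sum_{e\ni i}p_e\ell_e,\ \ d_3 := k\sum_{e\ni i}p_e,
\]
where the $O(\log n / n)$ absorbs the $\one\one^T, \sigma\sigma^T, (\one\sigma^T + \sigma\one^T)$ coefficients, each controlled by $\sum_{e\supseteq\{i,j\}}p_e = O(\log n / n)$ for $i\neq j$.

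Finally I evaluate $d_1, C, d_3$ asymptotically. Fix $i\in A$ and parametrize edges $e\ni i$ by $a := |e\cap A|$; using $\binom{n/2-1}{a-1}\binom{n/2}{k-a}/\binom{n-1}{k-1}\to\binom{k-1}{a-1}/2^{k-1}$, identify $a - 1$ with $\mathrm{Bin}(k-1, 1/2)$, which gives $\E\ell_e = 1$ and $\E\ell_e^2 = k$. Peeling off the in-cluster boundary term ($a = k$, where $p_e = p$ rather than $q$) yields
\begin{align*}
d_1 &\sim k\log n\bigl[k\alpha/2^{k-1} + \beta(1 - k/2^{k-1})\bigr], \\
2C &\sim 2\log n\bigl[k\alpha/2^{k-1} + \beta(1 - k/2^{k-1})\bigr], \\
d_3 &\sim k\log n\bigl[\alpha/2^{k-1} + \beta(1 - 1/2^{k-1})\bigr].
\end{align*}
Collecting, the $\alpha$ coefficient of $d_1 - 2C + d_3$ reduces to $k(k-1)/2^{k-1}\log n = 2(k-1)\cdot k/2^k\log n$, and a parallel calculation shows the $\beta$ coefficient simplifies to $2(k-1)(1 - k/2^k)\log n$. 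Hence $\mu \leq 2(k-1)\bigl[k\alpha/2^k + \beta(1 - k/2^k)\bigr]\log n + O(\log n / n)$, as claimed. The main obstacle is this final assembly: several $k$-dependent terms must cancel to produce the factor $(k-1)$, and the probabilistic rewriting $\ell_e \stackrel{d}{=} 2\,\mathrm{Bin}(k-1, 1/2) + 2 - k$ (valid for $i\in A$, up to sign for $i\in B$) is what makes the arithmetic transparent.
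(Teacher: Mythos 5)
Your proof is correct and follows essentially the same route as the paper: bound the variance proxy by $\|\Pi(\sum_e p_e\,\cdot\,)\Pi\|$ using positive semidefiniteness, invoke the permutation symmetry preserving $\sigma$ to collapse the projected matrix to a multiple of $\Pi$, and evaluate the scalar by direct combinatorics. The only differences are cosmetic bookkeeping — you relax $N_e\Pi N_e$ to $N_e^2$ (costing only $O(\log n/n)$ after projection) and extract the constant from the common diagonal entry via binomial moments of $\ell_e$, where the paper keeps the inner $\Pi$ and computes $\mathrm{tr}(\Sigma)$ and $\sigma^T\Sigma\sigma$ instead; both yield the same leading coefficient.
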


\begin{proof}
Let $Y_e$ be
$$
Y_e := \left((\one_e^T \sigma_e)\diag(\sigma_e)-\sigma_e\sigma_e^T\right) \Pi \left((\one_e^T \sigma_e)\diag(\sigma_e)-\sigma_e\sigma_e^T\right)
$$
and let $\Sigma = \sum_{e} \E(A_\cH)_e Y_e$. Since $Y_e$ is positive semidefinite and $\E\left((A_\cH)_e-\E(A_\cH)_e\right)^2 \leq \E(A_\cH)_e$, we have
\begin{eqnarray*}
\sigma^2 &=& \left\| \sum_{e} \E\left((A_\cH)_e-\E(A_\cH)_e\right)^2 \cdot \Pi Y_e \Pi \right\| \\
&\leq& \left\| \sum_{e} \E(A_\cH)_e \cdot \Pi Y_e \Pi \right\| = \left\| \Pi \Sigma \Pi \right\|. \\
\end{eqnarray*}

We get the exact expression of $\Sigma$ in the following lemma. We defer the proof to the section \ref{sec:variance} in the appendix.

\begin{lemma}
\label{lemma:variance}
$\Sigma = c_1 \cdot \frac{1}{n} \sigma\sigma^T + c_2 \cdot \Pi$ where
$$
c_1 = (k-1)\beta \log n + O\left(\frac{\log n}{n}\right)
$$
and
$$
c_2 = 2(k-1)\left(\frac{k}{2^{k}} \alpha+\left(1-\frac{k}{2^k}\right)\beta\right)\log n + O\left(\frac{\log n}{n}\right)
$$
\end{lemma}

The lemma implies that $\Pi\Sigma\Pi = c_2 \Pi$, so the norm of $\Pi\Sigma\Pi$ is equal to $c_2$. Since $\sigma^2 \leq \|\Pi\Sigma\Pi\|$ as we argued above, we get $\sigma^2 \leq c_2$ as desired. 
\end{proof}

We are now ready to prove Theorem \ref{thm:TRupper}.

\begin{proof}[Proof of Theorem \ref{thm:TRupper}]

Let $\epsilon$ be an arbitrary positive real number. By Matrix Bernstein Inequality and Proposition \ref{prop:sigmasq}, with probability $1-O(n^{-2\epsilon+o_n(1)})$ we have 
$$
\| \Pi(L_\Gamma - \E L_\Gamma)\Pi \| < (1+\epsilon) \sqrt{2c_2\log n}
$$
where
$$
c_2 = (k-1)\left(k(1-2^{-k+1})\alpha + (2^{-k+1}-k+2)\beta\right)\log n + o(\log n).
$$
It implies that $\lambda_3(\Pi L_\Gamma \Pi) > 0$ with probability $1-O(n^{-2\epsilon+o_n(1)})$ if
$$
\frac{k-1}{2^{k-1}} (\alpha-\beta) \log n + o(\log n) > (1+\epsilon) \sqrt{2c_2\log n}.
$$
It reduces to 
$$
\frac{k-1}{2^{k-1}} (\alpha-\beta) > (1+\epsilon) \sqrt{ 4(k-1)\left(\frac{k}{2^{k}} \alpha+\left(1-\frac{k}{2^k}\right)\beta\right)}.
$$
\end{proof}

\subsection{Limitation of the algorithm}

In the previous section, we proved that the truncate-and-relax algorithm successfully recovers $\sigma$ with high probability if $\alpha$ and $\beta$ satisfies
$$
\frac{k-1}{2^{k-1}} (\alpha-\beta) > \sqrt{ 4(k-1)\left(\frac{k}{2^{k}} \alpha+\left(1-\frac{k}{2^k}\right)\beta\right)}.
$$
It is natural to ask whether this bound is improvable or not. Recall that $\widehat{\sigma}_{trunc}$ is the optimum solution for $\max x^T W x$ over balanced $x$'s in $\{\pm 1\}^n$. Since our algorithm is the relaxed version of it, we have 
$$
\Pr\left(\widehat{\Sigma} \neq \sigma\sigma^T\right) \geq \Pr\left(\widehat{\sigma}_{trunc}(\cH) \not\in \{\sigma,-\sigma\}\right).
$$
The following theorem gives a condition on $\alpha$ and $\beta$ such that that the probability that $\widehat{\sigma}_{trunc}(\cH)$ fails to recover $\sigma$ is $1-o_n(1)$. 

\begin{theorem}
\label{thm:tailbound2}
Let 
$$
I_2(\alpha,\beta) = \max_{t \geq 0} \frac{1}{2^{k-1}} \left(\alpha(1-e^{-(k-1)t}) + \beta\sum_{r=1}^{k-1} \binom{k-1}{r} (1-e^{-(k-1-2r)t})\right).
$$
If $I_2(\alpha,\beta) < 1$, then
$$
\Pr\left(\widehat{\sigma}_{trunc}(\cH) \not\in \{-\sigma,\sigma\}\right) = 1-o_n(1).
$$
In particular, the truncate-and-relaxation algorithm fails to recover $\sigma$ with probability $1-o_n(1)$.
\end{theorem}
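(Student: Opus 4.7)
The strategy is a direct adaptation of the impossibility argument of Section~\ref{sec:IT}: instead of counting discrepancies in the number of in-cluster hyperedges, we track the change of the quadratic form $x^T W x$ under a single swap move. Using $W_{uv} = \sum_{e \ni u,v}(A_\cH)_e$ and the identity $\sum_{v \in V}\sigma_u \sigma_v W_{uv} = \sum_{e \ni u}(A_\cH)_e(2 a_e - k - 1)$ (with $a_e := |e \cap \{w : \sigma_w = \sigma_u\}|$), a short calculation shows that if $x$ is obtained from $\sigma$ by swapping $a \in A$ and $b \in B$, then
\[
x^T W x - \sigma^T W \sigma = -2(\Delta_a + \Delta_b) - 4 W_{ab}, \qquad \Delta_v := \sum_{e \ni v}(A_\cH)_e (2 a_e - k - 1).
\]
Since $\widehat{\sigma}_{trunc}$ maximises $x^T W x$, it suffices to exhibit with probability $1 - o_n(1)$ a pair $(a,b) \in A \times B$ with $\Delta_a + \Delta_b + 2 W_{ab} < 0$.

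For $v \in A$, grouping the hyperedges through $v$ by $c := |e \cap A|$ gives
\[
\Delta_v = \sum_{c=1}^{k} (2c - k - 1)\, B_v^{(c)}, \qquad B_v^{(c)} \sim \mathrm{Bin}\!\left(\binom{n/2-1}{c-1}\binom{n/2}{k-c},\, p_c\right),
\]
independent in $c$, with $p_k = p$ and $p_c = q$ for $c < k$. Applying the generic tail estimate of Theorem~\ref{thm:tail} at the sub-$\log n$ threshold $-\delta \log n$ (with $\delta \to 0$) produces a sharp two-sided bound whose rate is $\log n \cdot \inf_{t \geq 0}\sum_c N_c p_c (e^{-(2c-k-1)t} - 1)/\log n$. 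Using $\binom{n/2-1}{c-1}\binom{n/2}{k-c}/\binom{n-1}{k-1} \to \binom{k-1}{c-1}/2^{k-1}$ and substituting $a = k - c$ in the cross-cluster range, this rate equals
\[
-\sup_{t \geq 0}\; \frac{1}{2^{k-1}}\Bigl(\alpha\bigl(1 - e^{-(k-1)t}\bigr) + \sum_{a=1}^{k-1} \beta \binom{k-1}{a}\bigl(1 - e^{-(k-1-2a)t}\bigr)\Bigr) = -I_2(\alpha,\beta),
\]
so $\Pr(\Delta_v \leq -\delta \log n) = n^{-I_2(\alpha,\beta) + o_n(1)}$.

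The remainder transcribes the ML lower bound. Pick a balanced $U \subseteq V$ with $|U| = \gamma n$ and $\gamma = \log^{-3} n$; let $\cS = \{e : |e \cap U| \geq 2\}$ and let $F$ be the event that $\sum_{e \in \cS, e \ni v}(A_\cH)_e \leq \delta \log n$ for every $v \in U$, with $\delta = (\log\log n)^{-1}$. Lemma~\ref{lemma:tailbound0} combined with a union bound gives $\Pr(F) = 1 - o_n(1)$. Let $\Delta_v^{(U)}$ denote the restriction of $\Delta_v$ to hyperedges with $e \cap U = \{v\}$; these are mutually independent across $v \in U$ because they depend on disjoint Bernoulli variables, and on $F$ we have $|\Delta_v - \Delta_v^{(U)}| \leq (k-1)\delta \log n$ and $W_{ab} \leq \delta \log n$ for all $a,b \in U$. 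The single-vertex estimate still applies to $\Delta_v^{(U)}$ at a rescaled threshold, so since $I_2(\alpha,\beta) < 1$,
\[
\Pr\bigl(\forall a \in U \cap A :\ \Delta_a^{(U)} > -M \delta \log n\bigr) \leq \bigl(1 - n^{-I_2+o_n(1)}\bigr)^{\gamma n/2} \leq e^{-\gamma n^{1-I_2-o_n(1)}/2} = o_n(1)
\]
for a suitable constant $M$, and symmetrically for $U \cap B$. Therefore with probability $1 - o_n(1)$ there exist $a \in U \cap A$ and $b \in U \cap B$ with $\Delta_a + \Delta_b + 2 W_{ab} < 0$, forcing $\widehat{\sigma}_{trunc} \notin \{\sigma, -\sigma\}$.

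The only genuinely new step is the identification of the Cramér exponent with $I_2(\alpha,\beta)$; because the weights $(2c - k - 1)$ are signed and the $c = k$ term (carrying weight $\alpha$) and the $c \in \{1, \ldots, k-1\}$ terms (each carrying weight $\beta$) enter asymmetrically, the reindexing must be performed carefully. Once this rate is in hand, the decorrelation via conditioning on $F$ and the control of the swap interaction $W_{ab}$ are verbatim adaptations of the argument in Section~\ref{sec:IT}.
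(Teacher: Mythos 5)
Your proposal is correct and follows essentially the same route as the paper: the swap identity reduces failure of $\widehat{\sigma}_{trunc}$ to single-vertex tail events for $\Delta_v$, the decomposition of $\Delta_v$ into independent binomials indexed by $|e\cap A|$ together with Theorem~\ref{thm:tail} yields exactly the exponent $I_2(\alpha,\beta)$ (the paper indexes by $r=|e\cap B|=k-c$ and carries an extra harmless factor of $2$ in the weights, coming from the convention $x^TWx$ versus $\sum_{i<j}W_{ij}x_ix_j$, which only rescales $t$ in the Cram\'er optimization), and the conditioning on $F$ to decorrelate the events $\{\Delta_v^{(U)}\}_{v\in U}$ and absorb the interaction term $W_{ab}$ is the same device used in the lower-bound proof of Theorem~\ref{thm:A}. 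No gaps.
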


\begin{proof}
The proof is a slight modification of the proof of Theorem \ref{thm:A}. Essentially it reduces to prove that
$$
\Pr\left(X_a \leq -\frac{2\log n}{\log \log n}\right) \geq n^{-I_2(\alpha,\beta)-o_n(1)}.
$$
where
$$
X_{a} = \sum_{e: e \cap U = \{a\}} (A_\cH)_e \left(\sum_{\{i,j\} \subseteq e} \sigma_i\sigma_j - \sigma^{(a)}_i \sigma^{(a)}_j \right)
$$
and $U = U_A \cup U_B$ and $a \in U_A$ are defined as in the proof of Theorem \ref{thm:A}, and this tail bound follows from the Theorem \ref{thm:tail}. Details are deferred to Section \ref{sec:tailbound2} in the appendix.

\end{proof}

\section{Discussion}
\label{sec:conjecture}

Let us first recapitulate the main results of this paper. In the stochastic block model for $k$-uniform hypergraphs where the (hyper)edge probabilities are given as
$$
p = \frac{\alpha \log n}{\binom{n-1}{k-1}} \quad \text{and} \quad 
q = \frac{\beta \log n}{\binom{n-1}{k-1}}
$$
for some constants $\alpha$ and $\beta$ such that $\alpha > \beta > 0$, we observed the following phase transition behaviours on exact recovery problem:
\begin{itemize}
\item[(i)] If $I(\alpha,\beta)<1$, then exact recovery is not possible. Conversely, if $I(\alpha,\beta)<1$ then the ML estimator recovers the correct partition (up to a global sign flip) with probability $1-o_n(1)$. 
\item[(ii)] If $I_{sdp}(\alpha,\beta)>1$, then the truncate-and-relax algorithm recovers the partition (up to a global sign flip) with probability $1-o_n(1)$. 
\item[(iii)] If $I_2(\alpha,\beta)<1$, then the truncate-and-relax algorithm fails with probability $1-o_n(1)$.
\end{itemize}
Here $I$, $I_2$ and $I_{sdp}$ are functions depending on $\alpha$ and $\beta$ (and implicitly depending on $k$, which we assumed to be a constant) defined as
\begin{eqnarray*}
I(\alpha,\beta) &=& \frac{1}{2^{k-1}} \left(\sqrt{\alpha}-\sqrt{\beta}\right)^2 \\
I_2(\alpha,\beta) &=& \max_{t \geq 0} \frac{\alpha-\beta}{2^{k-1}} (1-e^{-(k-1)t}) + \beta\left(1-\left(\frac{e^t+e^{-t}}{2}\right)^{k-1}\right)\\
I_{sdp}(\alpha,\beta) &=& \frac{k-1}{2^{2k}} \frac{(\alpha-\beta)^2}{\left(\frac{k}{2^{k}} \alpha+\left(1-\frac{k}{2^k}\right)\beta\right)}.
\end{eqnarray*}

We first note that sharp phase transition occurs at $I(\alpha,\beta)=1$ for exact recovery. Indeed, it can be \emph{efficiently} achieved, by spectral algorithms with a local refinement step as suggested in \cite{chien2018minimax}. Specifically authors of \cite{chien2018minimax} prove that their algorithm achieves exact recovery whenever $I(\alpha,\beta)>1$ and conjectured that $I(\alpha,\beta)=1$ is the sharp threshold. We confirmed their conjecture in this work. On the other hand, there is a gap between the guaranteed performance of the truncate-and-relax algorithm and the impossibility region of the algorithm as shown in Figure \ref{fig:thresholds} and Figure \ref{fig:simulation}. We are yet to show how the algorithm works in between, which is when $\alpha$ and $\beta$ satisfies $I_{sdp}(\alpha,\beta) < 1$ but $I_2(\alpha,\beta) > 1$. We propose that the line $I_2(\alpha,\beta)=1$ is the correct threshold for the performance guarantee of the algorithm.

\begin{conjecture}
If $I_2(\alpha,\beta) > 1$, then the truncate-and-relax algorithm successfully recovers $\sigma\sigma^T$ with probability $1-o_n(1)$. 
\end{conjecture}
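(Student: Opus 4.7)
The plan is to revisit the dual-certificate analysis by \emph{not} invoking Matrix Bernstein on $\Pi(L_\Gamma - \E L_\Gamma)\Pi$, but instead by separating the diagonal (``signed-degree'') and off-diagonal contributions to $L_\Gamma$. The dominant obstruction to $\Pi L_\Gamma \Pi \succ 0$ should come from a single vertex having a small signed degree, and the large-deviation rate for that single-vertex event is precisely $I_2(\alpha,\beta)$; this is why $I_2(\alpha,\beta)>1$ is expected to be the sharp sufficient condition, exactly matching the lower bound in Theorem~\ref{thm:C}.

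First I would decompose $\Gamma = \E\Gamma + R$ with $R$ centered. A direct computation using permutation symmetry within clusters gives
\[
\E\Gamma \;=\; \tfrac{M_1+M_2}{2}\sigma\sigma^T \;+\; \tfrac{M_1-M_2}{2}\one\one^T \;-\; M_1 I,
\]
where $M_1 = p\binom{n/2-2}{k-2}+q\bigl(\binom{n-2}{k-2}-\binom{n/2-2}{k-2}\bigr)$ and $M_2 = q\binom{n-2}{k-2}$, both of order $O(\log n/n) = o_n(1)$. Hence $\Pi\,\E\Gamma\,\Pi = -M_1\Pi$, and for every $x\perp\one,\sigma$,
\[
x^T L_\Gamma x \;=\; \sum_v x_v^2 (D_\Gamma)_{vv} \;+\; M_1\|x\|^2 \;-\; x^T R x
\;\geq\; \bigl(\min_v (D_\Gamma)_{vv} + M_1 - \|R\|\bigr)\|x\|^2.
\]
Hence $\lambda_3(\Pi L_\Gamma\Pi)>0$ (and $\widehat{\Sigma}(\cH)=\sigma\sigma^T$ is the unique optimum of \eqref{eqn:sdp}) as soon as $\min_v(D_\Gamma)_{vv} > \|R\| - M_1$ with probability $1-o_n(1)$.

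Next I would control $\min_v(D_\Gamma)_{vv}$ via Chernoff plus a union bound, in a way that makes $I_2$ appear on the nose. For each fixed $v$, $(D_\Gamma)_{vv} = \sum_{a=0}^{k-1}(k-1-2a)\,N_a$, where $N_0\sim\mathrm{Bin}\bigl(\binom{n/2-1}{k-1},p\bigr)$ and $N_a\sim\mathrm{Bin}\bigl(\binom{n/2}{a}\binom{n/2-1}{k-1-a},q\bigr)$ for $a\geq 1$, independent across $a$. The exponential bound behind Theorem~\ref{thm:tail} then yields, for every real $\theta$,
\[
\Pr\!\Bigl((D_\Gamma)_{vv}\leq\theta\log n\Bigr)\;\leq\; n^{-I_{2,\theta}(\alpha,\beta)+o_n(1)},
\]
where
\[
I_{2,\theta}(\alpha,\beta)\;=\;\sup_{t\geq 0}\!\left[\tfrac{1}{2^{k-1}}\!\left(\alpha(1-e^{-(k-1)t})+\beta\!\sum_{a=1}^{k-1}\!\binom{k-1}{a}(1-e^{-(k-1-2a)t})\right) - t\theta\right]
\]
satisfies $I_{2,0}=I_2$ and depends continuously on $\theta$. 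Under the assumption $I_2(\alpha,\beta)>1$, continuity supplies some $\theta_0>0$ with $I_{2,\theta_0}(\alpha,\beta)>1$, whereupon a union bound over the $n$ vertices gives $\min_v(D_\Gamma)_{vv}>\theta_0\log n$ with probability $1-o_n(1)$.

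The hard part will be a sharp spectral bound on $R=\Gamma-\E\Gamma$: combining the two previous steps reduces the conjecture to showing $\|R\| = o(\log n)$ with high probability (since $M_1=o(1)$ and the diagonal already beats any $o(\log n)$ slack). This is exactly where the authors' current proof is lossy. Applied to the representation $R = \sum_e\bigl((A_\cH)_e-\E(A_\cH)_e\bigr)\bigl(\sigma_e\sigma_e^T-\diag(\one_e)\bigr)$, Matrix Bernstein yields only $\|R\|=O(\log n)$, because its variance proxy is itself of order $\log n$; this $\sqrt{\log n}$ loss is precisely what opens the gap between $I_{sdp}$ and $I_2$ in Figure~\ref{fig:thresholds}. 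To kill that loss I would attempt one of two routes: (i) a trace-moment plus degree-pruning argument in the spirit of Feige--Ofek (and its refinement by Lei--Rinaldo) for sparse random graphs, namely truncate hyperedges incident to the negligible set of atypically high-degree vertices and bound $\E\,\mathrm{tr}(R^{2p})$ on the remainder; or (ii) the non-commutative Khintchine / Bandeira--Van Handel matrix concentration, which replaces the crude variance proxy by a structural one sensitive to the rank-one nature of each $\sigma_e\sigma_e^T-\diag(\one_e)$. Either approach should deliver $\|R\|=O(\sqrt{\log n})$ with probability $1-o_n(1)$, which is much more than enough to close the argument and establish the conjecture.
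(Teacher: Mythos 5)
This statement is left as a conjecture in the paper; there is no proof to compare against, only the heuristic in Section~\ref{sec:conjecture}. Your reduction is sound and sharpens that heuristic in a useful way: the decomposition $\E\Gamma = \tfrac{M_1+M_2}{2}\sigma\sigma^T + \tfrac{M_1-M_2}{2}\one\one^T - M_1 I$ is correct, the identity $x^T L_\Gamma x = \sum_v x_v^2 (D_\Gamma)_{vv} + M_1\|x\|^2 - x^T R x$ for $x\perp\one,\sigma$ is correct, and your identification of the single-vertex large-deviation rate for $(D_\Gamma)_{vv}$ with $I_2$ is consistent with the paper's own computation (indeed $(D_\Gamma)_{aa}$ coincides, up to a factor $2$ and negligible edge counts, with the statistic $X_a$ analyzed in Section~\ref{sec:tailbound2}). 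The Chernoff-plus-continuity argument producing a $\theta_0>0$ with $\min_v (D_\Gamma)_{vv} > \theta_0\log n$ w.h.p.\ under $I_2(\alpha,\beta)>1$ is fine, modulo checking the hypotheses of Theorem~\ref{thm:tail} (not all weights positive, $\theta_0 < \tfrac{k-1}{2^{k-1}}(\alpha-\beta)$), which do hold.

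The gap is that the entire content of the conjecture is concentrated in the step you defer: proving $\|R\| = \|\Gamma - \E\Gamma\| = o(\log n)$ with high probability. This is precisely the obstruction the paper itself identifies --- Matrix Bernstein cannot do better than $O(\sqrt{\sigma^2\log n}) = O(\log n)$ here because the variance proxy is itself $\Theta(\log n)$ --- and the paper poses the needed improvement as an open question at the end of Section~\ref{sec:conjecture}. Saying that a Feige--Ofek/Lei--Rinaldo trace-moment argument or a Bandeira--Van Handel-type bound ``should deliver'' $\|R\| = O(\sqrt{\log n})$ is a plan, not a proof: both cited results are stated for matrices with \emph{independent} entries, whereas here a single hyperedge simultaneously perturbs all $\binom{k}{2}$ entries of the block $\sigma_e\sigma_e^T - \diag(\one_e)$, so the epsilon-net/discrepancy machinery (or the variance-versus-$\sigma_*$ decoupling) must be re-derived for this dependent, blockwise structure. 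Until that concentration inequality is actually established, the argument does not close; what you have is a correct and clean reduction of the conjecture to a single (plausible, but unproven) spectral concentration statement.
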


There are a few reasons to believe this conjecture. First, if we look deeper into the proof of Theorem \ref{thm:B} then the main obstacle to prove the conjecture arises from when we use the matrix Bernstein inequality to bound $\|\Pi (L_\Gamma - \E L_\Gamma)\Pi\|$. The matrix Bernstein inequality gives us that
$$
\E \|\Pi (L_\Gamma - \E L_\Gamma)\Pi\| \lesssim \sigma \sqrt{\log n}
$$
where
$$
\sigma^2 = \left\| \E(\Pi (L_\Gamma - \E L_\Gamma)\Pi)^2 \right\|.
$$
In the case of $k=2$, the random matrix $\Gamma$ has independent entries and one can obtain a tighter bound for $\|L_\Gamma - \E L_\Gamma\|$, via combinatorial method \cite{feige2005spectral}, stochastic comparison argument \cite{hajek2016achieving}, or trace method \cite{Bandeira2016}. Also, in \cite{Bandeira2016laplacian} the following bound for Laplacian random matrices was proved.

\begin{theorem}
Let $L$ be a $n \times n$ symmetric random Laplacian matrix (i.e. satisfying $L \one = 0$) with centered independent off-diagonal entries such that $\sum_{j \in [n]\setminus \{i\}} \E L_{ij}^2$ is equal for all $i$, and
$$
\sum_{j \in [n]\setminus \{i\}} \E L_{ij}^2 \gtrsim \max_{i\neq j} \|L_{ij}\|_{\infty}^2 \log n.
$$
Then, with high probability,
$$
\|L\| \lesssim \left(1+\frac{1}{\sqrt{\log n}}\right) \max_{i} L_{ii}.
$$
\end{theorem}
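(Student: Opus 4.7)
The plan is to bound $\|L\|$ and $\max_i L_{ii}$ on separate scales, showing that $\|L\| = O(s)$ while $\max_i L_{ii} = \Omega(s\sqrt{\log n})$ with high probability, where $s^2 := \sum_{j \neq i}\E L_{ij}^2$ is the common value guaranteed by the symmetry hypothesis. Dividing then gives $\|L\|/\max_i L_{ii} = O(1/\sqrt{\log n})$, which is comfortably within the stated $(1 + 1/\sqrt{\log n})$ factor.

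For the upper bound on $\|L\|$, I would decompose $L = \sum_{i<j} \xi_{ij}(e_i - e_j)(e_i - e_j)^T$, where $\xi_{ij} := -L_{ij}$ are independent centered random variables with $\|\xi_{ij}\|_\infty \leq \sigma_* := \max_{i\neq j}\|L_{ij}\|_\infty$. This expresses $L$ as a sum of independent rank-one PSD matrices of operator norm $2$ and automatically encodes $L\one = 0$. A direct computation using $L_{ii} = -\sum_{j\neq i} L_{ij}$, independence, and centering shows $\E L^2 = 2 L_M$, where $L_M$ is the deterministic Laplacian of the variance matrix $M_{ij} = \E L_{ij}^2$; the symmetry hypothesis makes $L_M$ constant-diagonal with value $s^2$, and Gershgorin yields $v(L)^{1/2} := \|\E L^2\|^{1/2} \leq 2s$. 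I would then invoke the Bandeira--van Handel sharp matrix concentration inequality $\E\|L\| \lesssim v(L)^{1/2} + \sigma_*\sqrt{\log n}$; combined with $v(L)^{1/2} \leq 2s$ and the hypothesis $s^2 \gtrsim \sigma_*^2 \log n$, this gives $\E\|L\| \lesssim s$, and a standard bounded-differences concentration around the expectation promotes this to a high-probability bound.

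For the lower bound on $\max_i L_{ii}$, each $L_{ii}$ is a sum of $n-1$ independent centered summands with total variance $s^2$ and individual $L^\infty$-bound $\sigma_*$. Under the hypothesis $\sigma_*\sqrt{\log n} \lesssim s$, the Lindeberg condition is comfortably satisfied at the scale $s\sqrt{\log n}$, so a Bennett-type anti-concentration estimate (or Berry--Esseen applied at this scale) yields $\Pr(L_{ii} \geq c\, s\sqrt{\log n}) \geq n^{-1/2}$ for some absolute constant $c > 0$. The rows are only weakly correlated: any two $L_{ii}$ and $L_{jj}$ share only the single summand involving $L_{ij}$, whose contribution is of strictly lower order. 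A near-independent union bound therefore gives $\max_i L_{ii} \gtrsim s\sqrt{\log n}$ with probability $1 - o(1)$.

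The principal obstacle is the sharp matrix concentration step: naive matrix Bernstein only delivers $\|L\| \lesssim v(L)^{1/2}\sqrt{\log n}$, which matches $\max_i L_{ii}$ up to an absolute constant but cannot produce the $1 + 1/\sqrt{\log n}$ refinement. Obtaining the improved estimate $v(L)^{1/2} + \sigma_*\sqrt{\log n}$ requires either the full Bandeira--van Handel moment-method argument --- computing $\E\,\mathrm{tr}\,L^{2k}$ for $k \sim \log n$ and carefully bounding the resulting combinatorial sum over closed walks, exploiting the rank-one structure of the summands $(e_i - e_j)(e_i - e_j)^T$ --- or a comparable generic chaining/$\varepsilon$-net argument. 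This is where the technical weight of the proof concentrates.
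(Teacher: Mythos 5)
Your plan contains an internal contradiction that makes the first half of the argument unsalvageable as stated. For any symmetric matrix one has $\|L\| \geq e_i^T L e_i = L_{ii}$ for every $i$, hence $\|L\| \geq \max_i L_{ii}$ deterministically. Your second step (correctly, in outline) establishes $\max_i L_{ii} \gtrsim s\sqrt{\log n}$ with high probability, so $\|L\| \gtrsim s\sqrt{\log n}$ as well --- which directly refutes your first claim that $\|L\| = O(s)$, and makes the conclusion ``$\|L\|/\max_i L_{ii} = O(1/\sqrt{\log n})$'' impossible (that ratio is always at least $1$). The step that fails is the invocation of the Bandeira--van Handel bound $\E\|X\| \lesssim v(X)^{1/2} + \sigma_*\sqrt{\log n}$: that inequality is proved for matrices with \emph{independent entries}, and $L$ does not have independent entries --- its diagonal is a deterministic function of the off-diagonal entries, and it is precisely the diagonal that carries the $\sqrt{\log n}$ factor. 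Viewing $L$ as a sum of independent rank-one matrices does not help: for general such sums only the matrix Bernstein rate $v^{1/2}\sqrt{\log n}$ is available, and here that rate is genuinely attained, so no amount of moment-method effort will yield $\E\|L\|\lesssim s$.

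The repair, which is how the cited result is actually proved, is to split $L = D - W$ with $D = \diag(L)$ and apply the sharp independent-entry bound only to the off-diagonal part $W$ (which \emph{does} have independent entries above the diagonal): $\E\|W\| \lesssim s + \sigma_*\sqrt{\log n} \lesssim s$ by the hypothesis $s^2 \gtrsim \sigma_*^2\log n$. Combining with Bernstein-plus-union for $\max_i|L_{ii}| \lesssim s\sqrt{\log n}$ and your anti-concentration lower bound $\max_i L_{ii} \gtrsim s\sqrt{\log n}$ (one must also argue $\max_i |L_{ii}|$ and $\max_i L_{ii}$ agree up to $1+o(1)$, which follows from the same tail estimates), the triangle inequality gives $\|L\| \leq \max_i|L_{ii}| + \|W\| \leq \bigl(1 + O(1/\sqrt{\log n})\bigr)\max_i L_{ii}$. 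Note also that the paper does not prove this theorem; it is quoted from the reference on random Laplacian matrices, so there is no internal proof to compare against --- but the decomposition into diagonal and off-diagonal parts is the essential idea your attempt is missing.
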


This bound cannot be used for $k > 2$ as entries of $\Gamma$ are not independent to each other. We ask whether the bound could be extended to our setting: would we have similar bound if $L$ can be expressed as
$$
L = \sum_{S \subseteq [n], |S|=k} \xi_S L^{(S)},
$$
where $L^{(S)}$ is $n \times n$ symmetric Laplacian matrix such that $L_{ij}^{(S)}$ is non-zero only if $i, j \in S$?

\begin{figure}
\centering
\includegraphics[scale=0.6]{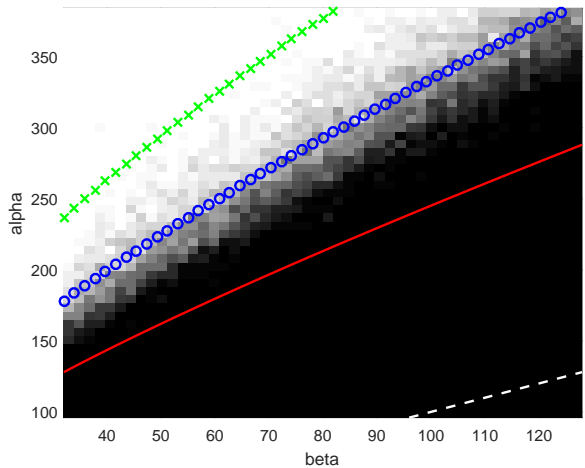}
\caption{Result of simulation of the truncate-and-relax algorithm for $k=6$ and $n=500$. Each gray-scale block corresponds to a pair $(\alpha,\beta)$, and its color denotes the success rate over 30 trials (black corresponds to 0 success, and brighter color correspond to higher success rate). The solid line represents $I(\alpha,\beta)=1$, the circled line represents $I_2(\alpha,\beta)=1$, and the x-marked line represents $I_{sdp}(\alpha,\beta)=1$.}
\label{fig:simulation}
\end{figure}

We ran a simulation to support our conjecture. For each $\alpha$ and $\beta$, we generated 30 random hypergraphs according to the model, and constructed the dual certificate for each hypergraph as in the proof of Theorem \ref{thm:B}. When the constructed dual solution is positive-semidefinite, it was counted as a success. Figure \ref{fig:simulation} shows the result of the simulation and it suggests that the true phase transition occurs at $I_2(\alpha,\beta)=1$ as we proposed.

\bibliographystyle{alpha}
\bibliography{mybib}

\appendix
\section{Tail probability of weighted sum of binomial variables}
\label{sec:tail}
In this section, we investigate the precise asymptotics of the tail probability of weighted sum of independent binomial variables. Using Theorem \ref{thm:tail}, we derive the formulas which were used to prove information-theoretic limits in section \ref{sec:IT} and \ref{sec:truncrelax}.

\begin{theorem}
\label{thm:tail}
Let $r$ and $s$ be positive integers. Let $c_1, c_2, \cdots, c_r$ be non-zero real numbers. Let $h(n)$ be a non-decreasing function which is $\Omega(1)$ and $o(n^{s/2}/\log n)$. For each $i \in [r]$, let $Y_i$ be the random variable distributed as the binomial distribution $\mathrm{Bin}(N_i, p_i)$ where 
$$
N_i = (1+o_n(1)) \rho_i \cdot h(n) \binom{n}{s} \quad \text{and} \quad
p_i = (1+o_n(1)) \alpha_i \cdot \frac{\log n}{\binom{n}{s}},
$$
for some positive constant (not depending on $n$) $\rho_i$ and $\alpha_i$. Let $X = \sum_{i=1}^r c_i Y_i$. Suppose that (i) not all $c_i$ are positive, and (ii) $\sum_{i=1}^r c_i \alpha_i \rho_i > 0$. Then, for any $\delta \in (-\infty, \sum_{i=1}^r c_i \alpha_i \rho_i)$, we have
$$
\Pr(X \leq (1+o_n(1))\delta \cdot h(n) \log n) = \exp\left(-(1+o_n(1)) I^* \cdot h(n) \log n\right),
$$
where
$$
I^* = \max_{t \geq 0} \left(-t\delta+ \sum_{i=1}^r \alpha_i \rho_i (1-e^{-tc_i})\right).
$$
\end{theorem}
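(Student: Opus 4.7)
The plan is to establish Theorem~\ref{thm:tail} by the classical large-deviations recipe: derive the asymptotic log-moment-generating function of $-X$, obtain the upper tail bound by Chernoff/Markov, and produce the matching lower bound via exponential change of measure (tilting). The rate function $I^*$ will emerge as the Legendre transform of the limiting cumulant.

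First, for $t\ge 0$, independence of the $Y_i$ and $Y_i\sim\mathrm{Bin}(N_i,p_i)$ give $\mathbb{E}e^{-tX}=\prod_i (1+p_i(e^{-tc_i}-1))^{N_i}$. Using $p_i=o(1)$, the expansion $\log(1+p_i(e^{-tc_i}-1))=p_i(e^{-tc_i}-1)+O(p_i^2 e^{2t|c_i|})$, combined with $N_i p_i=(1+o(1))\alpha_i\rho_i h(n)\log n$ and $N_i p_i^2=O(h(n)(\log n)^2/\binom{n}{s})$, yields
$$\log \mathbb{E}e^{-tX}=-(1+o(1))\,h(n)\log n\,\sum_{i=1}^r \alpha_i\rho_i(1-e^{-tc_i}),$$
where the $o(1)$ is uniform for $t$ in any bounded set, owing to $h(n)=o(n^{s/2}/\log n)$ which forces the quadratic error to be $o(h(n)\log n)$. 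Markov's inequality gives $\Pr(X\le (1+o(1))\delta h(n)\log n)\le \exp\!\bigl(h(n)\log n\,[t\delta-\sum_i\alpha_i\rho_i(1-e^{-tc_i})]+o(h(n)\log n)\bigr)$, and optimizing over $t\ge 0$ produces the upper bound $\exp(-(1+o(1))I^* h(n)\log n)$. Hypothesis~(i) — that some $c_i\le 0$ — ensures $\sum\alpha_i\rho_i(1-e^{-tc_i})\to-\infty$ as $t\to\infty$, so a finite maximizer $t^*\ge 0$ exists; hypothesis~(ii) with $\delta<\sum c_i\alpha_i\rho_i$ makes $t^*>0$ and $I^*>0$.

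For the matching lower bound, fix the optimizer $t^*$ and define the tilted law $\widetilde\Pr$ under which each $Y_i$ is independent binomial with parameters $(N_i,\tilde p_i)$ where $\tilde p_i=p_i e^{-t^* c_i}/(1-p_i+p_i e^{-t^* c_i})$. Using the first-order optimality relation $\sum c_i\alpha_i\rho_i e^{-t^* c_i}=\delta$ (interior of the feasible region), one checks $\widetilde{\mathbb{E}}X=(1+o(1))\delta h(n)\log n$ and $\widetilde{\mathrm{Var}}(X)=O(h(n)\log n)$. The Radon-Nikodym derivative is $d\Pr/d\widetilde\Pr=e^{t^*X}\,\mathbb{E}e^{-t^*X}$, so for any $\eta>0$,
$$\Pr(X\le \delta h(n)\log n)\ge e^{t^*(\delta-\eta)h(n)\log n}\,\mathbb{E}e^{-t^*X}\,\widetilde\Pr\!\bigl((\delta-\eta)h(n)\log n\le X\le\delta h(n)\log n\bigr).$$
The exponential prefactor is $\exp(-(I^*+\eta t^*+o(1))\,h(n)\log n)$ by the log-MGF computation, while Chebyshev shows the tilted two-sided window has probability $1-o(1)$, and a Berry-Esseen / CLT estimate (valid since $X$ is a sum of many independent, bounded terms with variance $\to\infty$) gives that a constant fraction of the tilted mass lies at or below $\widetilde{\mathbb{E}}X$. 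Taking $\eta\downarrow 0$ on the logarithmic scale closes the gap to $I^*$.

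The main obstacle I anticipate is controlling the $o(1)$ corrections uniformly. The condition $h(n)=o(n^{s/2}/\log n)$ is used precisely to ensure that the quadratic term $\sum_i N_i p_i^2 e^{2t^*|c_i|}$ in the Taylor expansion of the log-MGF is $o(h(n)\log n)$; without it the rate-function identification would break down. A second subtlety is that $\widetilde{\mathrm{Var}}(X)$ must be $o\bigl((\eta h(n)\log n)^2\bigr)$ for Chebyshev to kick in, which needs $h(n)\log n\to\infty$ — guaranteed by $h(n)=\Omega(1)$ and the assumption that $h(n)\log n$ scales with $n$. Finally, a diagonal choice $\eta=\eta(n)\to 0$ slowly (say $\eta=1/\log\log n$) lets both the Chebyshev lower bound and the $\eta t^*$ correction to the exponent be absorbed into the $(1+o(1))$ factor in the final exponent, producing the claimed two-sided asymptotic equality.
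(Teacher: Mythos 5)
Your upper bound is the same Chernoff/Markov computation as the paper's (the paper sidesteps the Taylor expansion entirely by using $1-x\le e^{-x}$, so it needs no control of the quadratic error; note also that $N_ip_i^2=O(h(n)\log^2 n/\binom{n}{s})=o(h(n)\log n)$ automatically, so the hypothesis $h(n)=o(n^{s/2}/\log n)$ is \emph{not} what makes that term negligible --- the paper explicitly remarks this hypothesis is not needed for the upper bound). Your lower bound, however, takes a genuinely different route: you use the Cram\'er-style exponential change of measure plus Chebyshev and a CLT estimate, whereas the paper simply evaluates the point probability $\prod_i\Pr(Y_i=y_i)$ at explicit integers $y_i=(1-o(1))\alpha_i\rho_i e^{-c_it^*}h(n)\log n$ and lower-bounds the binomial coefficients via Stirling; the hypothesis $h(n)=o(n^{s/2}/\log n)$ is used precisely there, to guarantee $y_i\ll\sqrt{N_i}$ so that $\binom{N_i}{y_i}\ge N_i^{y_i}/(4\,y_i!)$ applies. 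The paper's route is more elementary (no tilted measure, no CLT input); yours is more robust and makes the role of the first-order condition $\sum_i c_i\alpha_i\rho_ie^{-c_it^*}=\delta$ transparent. One loose end in your version: the tilted mean is only $(1+o(1))\delta h(n)\log n$ and may exceed the target threshold $\delta h(n)\log n$ by far more than a standard deviation $\Theta(\sqrt{h(n)\log n})$, so centering your window at $[\,(\delta-\eta)h(n)\log n,\ \delta h(n)\log n\,]$ does not by itself guarantee $\widetilde\Pr$ of that window is bounded below; the standard fix is to over-tilt slightly (replace $t^*$ by $t^*+\zeta$ so the tilted mean sits strictly below the threshold by $\Theta(\zeta)h(n)\log n$) and let $\zeta\downarrow 0$ on the exponential scale, which is in the same spirit as your diagonal choice of $\eta$ and costs only $o(1)$ in the rate. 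With that adjustment your argument closes.
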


\begin{proof}
Let us first prove the upper bound on the tail probability. Let $x = (1+o_n(1))\delta \cdot h(n) \log n$. Then, by Chebyshev-type inequality, for any $t \geq 0$ we have
\begin{eqnarray*}
\Pr(X \leq x) &\leq& \frac{\E e^{-tX}}{e^{-tx}} \\
&=& e^{tx} \prod_{i=1}^r \E e^{-tc_i Y_i} \\
&=& e^{tx} \prod_{i=1}^r \left(1 - p_i(1-e^{-c_i t})\right)^{N_i} \\
&\leq& \exp\left(tx - \sum_{i=1}^r N_i p_i (1-e^{-c_i t})\right) \\
&=& \exp\left(-(1+o_n(1)) h(n)\log n \cdot \left(-t\delta + \sum_{i=1}^r \alpha_i \rho_i (1-e^{-c_i t})\right)\right).
\end{eqnarray*}
Here the fourth inequality follows from $1-x \leq e^{-x}$. By optimizing over $t \geq 0$, we get the desired bound. 

To prove the lower bound, note that 
$$
\Pr(X \leq x) \geq \prod_{i=1}^r \Pr(Y_i = y_i) = \prod_{i=1}^r \binom{N_i}{y_i} p_i^{y_i} (1-p_i)^{N_i-y_i}
$$
for any positive integers $y_1,\cdots,y_r$ satisfying $\sum_{i=1}^r c_i y_i \leq x$. 

Let $\phi(t) = -\delta t + \sum_{i=1}^r \alpha_i \rho_i (1-e^{-c_i t})$ and let $t^*$ be the maximizer of $\phi(t)$. Note that $\phi(t)$ is strictly convex, as 
$$
\phi''(t) = \sum_{i=1}^r c_i^2 \alpha_i \rho_i e^{-c_i t} > 0
$$
for any $t \geq 0$. Moreover, $\phi'(0) = \sum_{i=1}^r c_i \alpha_i \rho_i - \delta > 0$ and $\lim_{t \to \infty} \phi'(t) = -\infty$. Hence, there exists unique $t^*$ satisfying $\phi'(t^*) = 0$, which is the maximizer of $\phi(t)$. 

Let $\tau_i = \alpha_i \rho_i e^{-c_i t^*}$ for $i \in [r]$ and let $y_1,\cdots,y_r$ be integers such that $\sum_{i=1}^r c_i y_i \leq x$ and $y_i = (1-o_n(1)) \tau_i \cdot h(n) \log n$. Such $y_i$'s exist because 
$$
\sum_{i=1}^r c_i \tau_i = \sum_{i=1}^r c_i \alpha_i \rho_i e^{-c_i t^*} = \delta.
$$

We are going to use the following bound for the binomial coefficient for $\ell \leq \sqrt{N}$
$$
\binom{N}{\ell} \geq \frac{N^\ell}{4 \cdot \ell!}.
$$
By Stirling's approximation, we have $\ell! \leq e\sqrt{\ell} \cdot \left(\frac{\ell}{e}\right)^\ell$ so 
$$
\log \binom{N}{\ell} \geq \ell \log \left(\frac{eN}{\ell}\right)-\log(4e\sqrt{\ell}).
$$
Note that $y_i \ll \sqrt{N_i}$ since $h(n) = o(n^{s/2}/\log n)$. Hence,
$$
\log \left(\binom{N_i}{y_i} p_i^{y_i} (1-p_i)^{N_i-y_i}\right)
\geq
y_i \log\left(\frac{eN_ip_i}{(1-p_i)y_i}\right) + N_i \log (1-p_i) - \log(4e\sqrt{y_i}).
$$
Moreover,
\begin{eqnarray*}
y_i \log\left(\frac{eN_ip_i}{(1-p_i)y_i}\right) &=& (1-o_n(1))h(n) \log n \cdot \tau_i \log \left(\frac{e\alpha_i \rho_i}{\tau_i}\right)\\
N_i \log (1-p_i) &=& -(1+o_n(1))\alpha_i \rho_i \cdot h(n)\log n \\
\log(4e\sqrt{y_i}) &=& o(h(n)\log n).
\end{eqnarray*}
We get
$$
\Pr(X\leq x) \geq \exp\left(-(1+o_n(1)) h(n)\log n \cdot \sum_{i=1}^r \left(\alpha_i \rho_i - \tau_i \log \left(\frac{e\alpha_i \rho_i}{\tau_i}\right) \right)\right).
$$
Plugging in $\tau_i = \alpha_i \rho_i e^{-c_i t^*}$, we get
\begin{eqnarray*}
\sum_{i=1}^r \left(\alpha_i \rho_i - \tau_i \log \left(\frac{e\alpha_i \rho_i}{\tau_i}\right) \right)
&=& \sum_{i=1}^r \left(\alpha_i \rho_i - \alpha_i \rho_i (1+c_i t^*)e^{-c_i t^*}\right) \\
&=& -t^* \sum_{i=1}^r c_i\alpha_i \rho_i e^{-c_i t^*} + \sum_{i=1}^r \alpha_i \rho_i (1-e^{-c_i t^*})\\
&=& -\delta t^* + \sum_{i=1}^r \alpha_i \rho_i (1-e^{-c_i t^*})\\
&=& I^*,
\end{eqnarray*}
where the third equality follows from that $\sum_{i=1}^r c_i \alpha_i \rho_i e^{-c_i t^*} = \delta$. Hence,
$$
\Pr(X\leq x) \geq \exp\left(-(1+o_n(1))I^* \cdot h(n) \log n\right)
$$
as desired. 
\end{proof}

We remark that the condition $h(n) = o(n^{s/2}/\log n)$ is not required for the upper bound. 

\subsection{Proof of Lemma \ref{lemma:tailbound0}}
\label{sec:tailbound0}

Let us restate the lemma for readers.
\begin{lemma}[Lemma \ref{lemma:tailbound0}]
Let $X$ be a sum of independent Bernoulli variables such that $\E X = \Theta(\gamma \log n)$ where $\gamma = o_n(\log^{-1} n)$. Let $\delta$ be a positive number which decays to 0 as $n$ grows, with $\delta = \omega_n(\log^{-1} n)$. Then,
$$
\Pr\left(X > \delta \log n \right) \leq n^{-\delta \log \frac{\delta}{\gamma} + o_n(1)}.
$$
\end{lemma}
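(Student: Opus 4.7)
The plan is to apply a standard one-sided Chernoff bound via the moment generating function and then optimize the free parameter to match the shape of the target tail rate $\delta\log(\delta/\gamma)$. Write $X=\sum_i X_i$ with $X_i$ independent Bernoulli$(p_i)$ and set $\mu = \E X = \sum_i p_i = \Theta(\gamma\log n)$. For any $t\geq 0$ we have the Markov-type bound
$$
\Pr(X > \delta\log n) \leq e^{-t\delta \log n}\,\E e^{tX} = e^{-t\delta\log n}\prod_i\left(1 + p_i(e^t-1)\right) \leq \exp\!\left(-t\delta\log n + \mu(e^t-1)\right),
$$
using $1+x\leq e^x$ in the last step. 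The right-hand side is essentially $\exp(-\mu\, g(s))$ where $s = \delta\log n/\mu = \Theta(\delta/\gamma)$ and $g(s)=s\log s-s+1$ is the Legendre transform of $u\mapsto e^u-1-u$; the Chernoff optimum is $t^\ast = \log s$.

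To extract the precise asymptotics, I will simply plug in $t = \log(\delta/\gamma)$ (the leading-order optimizer, ignoring the hidden constant $c$ in $\mu = c\gamma\log n$). This gives $e^t = \delta/\gamma$ and so
$$
-t\delta\log n + \mu(e^t-1) = -\delta\log n\cdot \log(\delta/\gamma) + c\gamma\log n\cdot(\delta/\gamma - 1) = -\delta\log n\log(\delta/\gamma) + (c\delta - c\gamma)\log n.
$$
If one prefers the true optimum $t^\ast = \log(\delta/(c\gamma))$, the same computation produces a main term $-\delta\log n\log(\delta/\gamma)$ together with correction terms of order $\delta\log n\cdot \log c$, $\delta\log n$, and $\gamma\log n$.

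The final step is to verify that all of these correction terms are $o(\log n)$, so that after dividing by $\log n$ they contribute only $o(1)$ to the exponent. This is immediate from the hypotheses: $\delta = o_n(1)$ implies $\delta\log n = o(\log n)$, and $\gamma = o_n(\log^{-1}n)$ implies $\gamma\log n = o(1)$. Hence
$$
\Pr(X > \delta\log n) \leq \exp\!\left(-\delta\log(\delta/\gamma)\log n + o(\log n)\right) = n^{-\delta\log(\delta/\gamma) + o_n(1)},
$$
as required. The lower bound $\delta = \omega_n(\log^{-1}n)$ is not needed for the upper-tail estimate itself, but it guarantees $\delta\log n\to\infty$, which is what makes the threshold $\delta\log n$ a meaningful (i.e.\ non-vacuous) one in the application. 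The only mildly delicate point is ensuring that the hidden constant $c$ in $\mu = \Theta(\gamma\log n)$ does not distort the leading exponent; this is handled automatically because $\log c$ appears only multiplied by $\delta$, and $\delta\log c = o(1)$.
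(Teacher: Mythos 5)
Your proof is correct and follows essentially the same route as the paper's: both are the standard Chernoff/exponential-moment bound, with the paper using the pre-optimized form $\Pr(X\geq t\E X)\leq (e^{t-1}/t^t)^{\E X}$ at $t=\delta\log n/\E X$ and you plugging $t=\log(\delta/\gamma)$ into the raw MGF bound, which gives the same exponent up to the $o(\log n)$ corrections you check. One tiny quibble: the hypothesis $\delta=\omega_n(\log^{-1}n)$ is in fact used in your argument (not merely for non-vacuousness), since together with $\gamma=o_n(\log^{-1}n)$ it guarantees $\delta/\gamma\to\infty$ and hence that your chosen Chernoff parameter $t=\log(\delta/\gamma)$ is eventually nonnegative.
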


A standard Chernoff's bound implies that
$$
\Pr(X \geq t\E X) \leq \left(\frac{e^{t-1}}{t^t}\right)^{\E X}
$$
for any $t \geq 0$. Let $t = \frac{\delta \log n}{\E X}$. Then,
\begin{eqnarray*}
\Pr(X \geq \delta \log n) &\leq& \exp\left((t-1-t\log t)\E X\right) \\
&=& \exp\left( \left(1-\log \frac{\delta\log n}{\E X}\right) \delta \log n - \E X\right) \\
&=& \exp\left( \left(1-\log \frac{\delta}{\gamma} + O(1)\right) \delta \log n - o_n(1)\right) \\
&=& n^{-\delta \log \frac{\delta}{\gamma} + o_n(1)}
\end{eqnarray*}
as desired.

\subsection{Proof of Lemma \ref{lemma:tailbound1}}
\label{sec:tailbound1}

Let $a \in U_A$. Recall that
$$
X_{a} = \sum_{e: e \cap U = \{a\}} c_e (A_\cH)_e,
$$
where $c_e = \Sigma_e - \Sigma^{(ab)}_e$ for any $b \in U_B$. Concretely, the value of $c_e$ for $e$ satisfying $e \cap U = \{a\}$ is determined by the size of intersection of $e\setminus \{a\}$ and $A \setminus U_A$ as follows:
$$
c_e = \begin{cases}
+1 & \text{if $e\setminus \{a\} \subseteq A\setminus U_A$} \\
-1 & \text{if $(e\setminus\{a\}) \cap (A\setminus U_A) = \emptyset$}\\
0 & \text{otherwise}.
\end{cases}
$$
Hence, $X_a = Y_1 - Y_2$ where $Y_1$ and $Y_2$ are independent random variables such that $Y_1 \sim \mathrm{Bin}(N_1, p)$ and $Y_2 \sim \mathrm{Bin}(N_2, q)$ with
$$
N_1 = \binom{n/2-|U_A|}{k-1} \quad \text{and} \quad N_2 = \binom{n/2-|U_B|}{k-1}.
$$

Using Theorem \ref{thm:tail} with $c_1 = 1$, $c_2=-1$, $\alpha_1 = \alpha$, $\alpha_2 = \beta$, $\rho_1 = \rho_2 = \frac{1}{2^{k-1}}$, $h(n) = 1$ and $\delta = 0$, we get 
$$
\Pr\left(X_a \leq \frac{\log n}{\log \log n}\right) = \exp\left(-(1+o_n(1))I^* \cdot \log n\right),
$$
where 
\begin{eqnarray*}
I^* &=& \max_{t \geq 0} \sum_{i=1}^2 \alpha_i \rho_i (1-e^{-c_i t}) \\
&=& \max_{t \geq 0} \frac{1}{2^{k-1}} \left(\alpha (1-e^{-t}) + \beta (1-e^t)\right).
\end{eqnarray*}
The maximum is attained at $t^* = \frac{1}{2} \log \left(\frac{\alpha}{\beta}\right) > 0$ and
$$
I^* = \frac{1}{2^{k-1}} \left(\sqrt{\alpha}-\sqrt{\beta}\right)^2 = 1-\epsilon.
$$
Hence, 
$$
\Pr\left(X_a \leq \frac{\log n}{\log \log n}\right)\geq n^{-1+\epsilon-o_n(1)}
$$
as desired.

\subsection{Proof of the tail bound in Theorem \ref{thm:tailbound2}}
\label{sec:tailbound2}

We recall that $X_a$ is defined as
$$
X_a = \sum_{e: e \cap U = \{a\}} (A_\cH)_a \left(\sum_{ij \subseteq e} \sigma_i \sigma_j - \sigma^{(a)}_i \sigma^{(a)}_j\right).
$$
By definition of $\sigma^{(a)}$, we have
\begin{eqnarray*}
\left(\sum_{ij \subseteq e} \sigma_i \sigma_j - \sigma^{(a)}_i \sigma^{(a)}_j\right) &=& 2\sigma_a \sum_{i \in e \setminus \{a\}} \sigma_i \\
&=& 2\left(k-1-2|e \cap B|\right).
\end{eqnarray*}
Hence, $X_a = \sum_{r=0}^{k-1} c_r Y_r$ where $c_r = 2(k-1-2r)$ and $Y_r \sim \mathrm{Bin}(N_r, p_r)$ with
$$
N_r = \binom{n/2-|U_A|}{k-1-r} \binom{n/2-|U_B|}{r} = (1+o_n(1)) \frac{1}{2^{k-1}} \binom{k-1}{r} \binom{n}{k-1}
$$
and
$$
p_r = \begin{cases} p & \text{if $r=0$} \\ q & \text{otherwise.} \end{cases}
$$

Using Theorem \ref{thm:tail} with 
$$
c_r = 2(k-1-2r), \quad \rho_r = \frac{1}{2^{k-1}}\binom{k-1}{r}, \quad \alpha_r = \begin{cases} \alpha & \text{if $r=0$} \\ \beta & \text{otherwise,} \end{cases}
$$
and $h(n) = 1$ and $\delta = 0$, we have
$$
\Pr\left(X_a \leq -\frac{2\log n}{\log \log n}\right) = \exp\left(-(1+o_n(1))I_2 \cdot \log n\right)
$$
where
\begin{eqnarray*}
I_2 &=& \max_{t \geq 0} \sum_{r=0}^{k-1} \alpha_i \rho_i (1-e^{-c_i t})\\
&=& \max_{t \geq 0} \frac{1}{2^{k-1}} \left(\alpha(1-e^{-(k-1)t}) + \beta\sum_{r=1}^{k-1} \binom{k-1}{r} (1-e^{-(k-1-2r)t})\right),
\end{eqnarray*}
as desired.

\section{Miscellaneous proofs}

\subsection{Proof of Proposition \ref{prop:MLE}}
\label{sec:propMLE}
Recall that the MLE $\widehat{\sigma}_{MLE}(H)$ is defined as
$$
\widehat{\sigma}_{MLE}(H) = \argmax_{x \in \{\pm 1\}^V: \one^T x = 0} f_H(x),
$$
where $f_H(x) = \log \Pr_{(\sigma,\cH)}(\cH=H|\sigma=x)$.
Note that
\begin{eqnarray*}
f_H(x) &=& \log \Pr(\cH = H|\sigma = x) \\
&=& \log \left( \prod_{e \in \binom{V}{k}} \Pr(e \in E(\cH)|\sigma=x)^{(A_H)_e} \Pr(e \not\in E(\cH)|\sigma=x)^{1-(A_H)_e}\right) \\
&=& \sum_{\substack{e \in \binom{V}{k} \\ e\text{: in-cl. w.r.t. $x$}}} (A_H)_e \log p + (1-(A_H)_e) \log (1-p) \\
& & + \sum_{\substack{e \in \binom{V}{k} \\ e\text{: cr.-cl. w.r.t. $x$}}} (A_H)_e \log q + (1-(A_H)_e) \log (1-q) \\
&=& C + \log \left(\frac{p}{1-p}\right) \inprod{A_H, x^{\oeq k}} + \log \left(\frac{q}{1-q}\right) \inprod{A_H, 1-x^{\oeq k}},
\end{eqnarray*}
with
\begin{eqnarray*}
C &=& \log (1-p) \cdot \#(e:\text{ in-cl. w.r.t. $x$}) + \log (1-q) \cdot \#(e:\text{ cr.-cl. w.r.t. $x$}) \\
&=& 2\binom{n/2}{k} \log (1-p) + \left(\binom{n}{k}- 2\binom{n/2}{k}\right) \log (1-q).
\end{eqnarray*}
We note that $C$ is a constant not depending on $x$. Also, $\inprod{A_H, 1}$ is independent of $x$. We get
$$
\widehat{\sigma}_{MLE}(H) = \argmax_{x \in \{\pm 1\}^V: \one^T x = 0} \log \left(\frac{p(1-q)}{q(1-p)}\right) \inprod{A_H, x^{\oeq k}}.
$$
It implies that 
$$
\widehat{\sigma}_{MLE}(H) = \begin{cases}
\displaystyle \argmax_{x \in \{\pm 1\}^V: \one^T x = 0} \inprod{A_H, x^{\oeq k}} & \text{if $p > q$} \\
\displaystyle \argmin_{x \in \{\pm 1\}^V: \one^T x = 0} \inprod{A_H, x^{\oeq k}} & \text{if $p < q$} 
\end{cases}
$$
since $\log \left(\frac{p(1-q)}{q(1-p)}\right)$ is positive if $p > q$ and it is negative if $p < q$. 

\subsection{Proof of Lemma \ref{lemma:variance}}
\label{sec:variance}
We recall that 
$$
Y_e = \left((\one_e^T \sigma_e)\diag(\sigma_e) - \sigma_e\sigma_e^T\right)\Pi\left((\one_e^T \sigma_e)\diag(\sigma_e) - \sigma_e\sigma_e^T\right),
$$
where $\Pi = I - \frac{1}{n}\sigma\sigma^T - \frac{1}{n}{\one\one^T}$. We defined $\Sigma$ as
$$
\Sigma = \sum_{e} \E (A_\cH)_e \cdot Y_e = p\sum_{e:e \text{ is in-cl.}} Y_e + q\sum_{e:e \text{ is cross-cl.}} Y_e.
$$
\begin{lemma}[Lemma \ref{lemma:variance}]
$\Sigma = c_1 \cdot \frac{1}{n} \sigma\sigma^T + c_2 \cdot \Pi$ where
$$
c_1 = (k-1)\beta \log n + O\left(\frac{\log n}{n}\right)
$$
and
$$
c_2 =  2(k-1)\left(\frac{k}{2^{k}} \alpha+\left(1-\frac{k}{2^k}\right)\beta\right)\log n + O\left(\frac{\log n}{n}\right).
$$
\end{lemma}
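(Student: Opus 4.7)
The strategy is to reduce $\Sigma$ to a two-parameter family via symmetry, then pin down the two parameters by computing $\sigma^T\Sigma\sigma$ and $\mathrm{tr}(\Sigma)$ as sums over hyperedges stratified by $r := |e \cap B|$, where $B = \{v:\sigma_v=-1\}$.

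For the symmetry reduction, abbreviate $M_e := (\one_e^T\sigma_e)\diag(\sigma_e) - \sigma_e\sigma_e^T$ and $s_e := \one_e^T\sigma_e = k - 2r$. The identity $\diag(\sigma_e)\one = \sigma_e$ together with $\sigma_e^T\one = s_e$ gives $M_e\one = s_e\sigma_e - s_e\sigma_e = 0$, so $Y_e\one = 0$ and hence $\Sigma\one = 0$. The law of $(A_\cH)_e$ depends on $(e,\sigma)$ only through the in-/cross-cluster indicator, so $\Sigma$ is invariant under every permutation of $V$ that preserves the partition, and $Y_e$ is manifestly invariant under the global sign flip $\sigma\mapsto -\sigma$. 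The (symmetric) commutant of this combined group on $V$ is spanned by $\{I,\one\one^T,\sigma\sigma^T,\diag(\sigma),\sigma\one^T+\one\sigma^T\}$; the last two are odd under the sign flip, so $\Sigma \in \mathrm{span}\{I,\one\one^T,\sigma\sigma^T\}$, and the constraint $\Sigma\one = 0$ cuts this down to the two-dimensional span of $\Pi$ and $\sigma\sigma^T$. Thus $\Sigma = c_2\Pi + (c_1/n)\sigma\sigma^T$ for some scalars, and since $\Pi\sigma=0$ we recover them through $c_1 n = \sigma^T\Sigma\sigma$ and $c_1 + (n-2)c_2 = \mathrm{tr}(\Sigma)$.

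The two scalar quantities are routine calculations. From $\diag(\sigma_e)\sigma = \one_e$ and $\sigma_e^T\sigma = k$, one gets $M_e\sigma = s_e\one_e - k\sigma_e$, so $\one^T M_e\sigma = 0$, $\sigma^T M_e\sigma = s_e^2 - k^2$, and $\|M_e\sigma\|^2 = k(k^2-s_e^2)$; hence
\[ \sigma^T Y_e\sigma \;=\; \|\Pi M_e\sigma\|^2 \;=\; k(k^2-s_e^2) - \frac{(k^2-s_e^2)^2}{n}. \]
Expanding $M_e^2 = s_e^2\diag(\one_e) - s_e(\one_e\sigma_e^T+\sigma_e\one_e^T) + k\sigma_e\sigma_e^T$ and taking traces yields $\mathrm{tr}(M_e^2) = (k-2)s_e^2 + k^2$, and using $M_e\one=0$ one gets
\[ \mathrm{tr}(Y_e) \;=\; \mathrm{tr}(\Pi M_e^2) \;=\; (k-2)s_e^2 + k^2 - \frac{k(k^2-s_e^2)}{n}. \]
Both depend on $e$ only through $r$; the leading piece of $\sigma^T Y_e\sigma$ vanishes at $r\in\{0,k\}$ (in-cluster edges), so $\sigma^T\Sigma\sigma$ involves $\beta$ only, whereas $\mathrm{tr}(\Sigma)$ involves both $\alpha$ and $\beta$.

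The final step is to evaluate the combinatorial sums $\sum_{r=0}^{k}r^{j}\binom{n/2}{r}\binom{n/2}{k-r}$ for $j\in\{0,1,2\}$ via Vandermonde and the first two hypergeometric moments, subtract the $r\in\{0,k\}$ terms for the cross-cluster piece, substitute $p\binom{n-1}{k-1}=\alpha\log n$ and $q\binom{n-1}{k-1}=\beta\log n$, and simplify using the identity $\sum_{r=0}^{k} r(k-r)\binom{k}{r} = k(k-1)2^{k-2}$; it is precisely through this identity that the structural constant $\tfrac{k}{2^{k}}\alpha+(1-\tfrac{k}{2^{k}})\beta$ in $c_2$ emerges. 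The $O(\log n / n)$ remainder absorbs both the subleading corrections to the hypergeometric sums (from $\binom{n/2}{r}\binom{n/2}{k-r}/\binom{n}{k} = \binom{k}{r}/2^k + O(1/n)$) and the explicit $1/n$ terms inside $\sigma^T Y_e\sigma$ and $\mathrm{tr}(Y_e)$. The only mildly delicate step is this bookkeeping in the final collection; no probabilistic input beyond $\E(A_\cH)_e\in\{p,q\}$ is required.
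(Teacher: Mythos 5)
Your proposal is correct and follows essentially the same route as the paper: reduce $\Sigma$ by symmetry (plus $\Sigma\one=0$) to the span of $\Pi$ and $\sigma\sigma^T$, then extract $c_1,c_2$ from $\sigma^T\Sigma\sigma$ and $\mathrm{tr}(\Sigma)$, with your formulas for $\sigma^T Y_e\sigma$ and $\mathrm{tr}(Y_e)$ agreeing exactly with the paper's after substituting $s_e=k-2r$. The only (immaterial) difference is that the paper evaluates the sums $\sum_r r(k-r)\binom{n/2}{r}\binom{n/2}{k-r}$ exactly via a Vandermonde-type identity, whereas you use the asymptotic $\binom{n/2}{r}\binom{n/2}{k-r}/\binom{n}{k}=\binom{k}{r}/2^k+O(1/n)$ together with $\sum_r r(k-r)\binom{k}{r}=k(k-1)2^{k-2}$; both yield the same leading order, and your explicit handling of the sign-flip symmetry to rule out $\diag(\sigma)$ and $\sigma\one^T+\one\sigma^T$ is in fact slightly more careful than the paper's.
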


We note that $\sum_{e: e\text{ is in-cl.}} Y_e$ and $\sum_{e: e\text{ is cross-cl.}}Y_e$ are invariant under any permutation on $V$ preserving $\sigma$. It implies that the both matrices and $\Sigma$ are in the span of $\Pi$, $\sigma\sigma^T$ and $\one\one^T$. Moreover,
$$
\left((\one_e^T \sigma_e)\diag(\sigma_e) - \sigma_e\sigma_e^T\right)\one = (\one_e^T \sigma_e) \sigma_e - (\one^T \sigma_e) \sigma_e = 0
$$
so $\inprod{Y_e, \one\one^T} = 0$. Hence, 
$$
\Sigma = \frac{\inprod{\Sigma, \Pi}}{\inprod{\Pi, \Pi}} \Pi + \inprod{\Sigma, \frac{1}{n}\sigma\sigma^T} \cdot \frac{1}{n} \sigma\sigma^T.
$$
It implies that 
\begin{eqnarray*}
c_1 &=& \inprod{\Sigma, \frac{1}{n}\sigma\sigma^T} =
\frac{1}{n} \sigma^T \Sigma \sigma \\
c_2 &=& \frac{\inprod{\Sigma, \Pi}}{\inprod{\Pi, \Pi}} = \frac{1}{n-2}\left(tr(\Sigma) - \frac{1}{n} \sigma^T \Sigma \sigma\right) 
\end{eqnarray*}

Now, let us first compute $\sigma^T \Sigma \sigma$. For simplicity, let $r = \frac{1}{2}(k-\one_e^T \sigma_e)$. Then,
\begin{eqnarray*}
\sigma^T Y_e \sigma &=& \sigma^T\left((k-2r)\diag(\sigma_e) - \sigma_e\sigma_e^T\right)\Pi\left((k-2r)\diag(\sigma_e) - \sigma_e\sigma_e^T\right)\sigma \\
&=&
\left((k-2r) \one_e - k\sigma_e\right)^T \Pi \left((k-2r) \one_e - k\sigma_e\right) \\
&=& \|(k-2r) \one_e - k\sigma_e\|_2^2 - \frac{1}{n} ((k-2r) \one_e - k\sigma_e)^T\sigma)^2 \\
&=& (k^3-k(k-2r)^2) - \frac{1}{n} \left((k-2r)^2 - k^2\right)^2\\
&=& 4kr(k-r) - \frac{16}{n} r^2(k-r)^2.
\end{eqnarray*}
In particular, $\sigma^T Y_e\sigma = 0$ if $e$ is in-cluster with respect to $\sigma$. Hence,
\begin{eqnarray*}
\sigma^T \Sigma \sigma &=& q\sum_{e:e \text{ is cross-cl.}} \sigma^T Y_e\sigma \\
&=& q \sum_{r=1}^{k-1} \left(4kr(k-r) - \frac{16}{n} r^2(k-r)^2\right)\binom{n/2}{r}\binom{n/2}{k-r}.
\end{eqnarray*}
We note that for any $s, t \in \{1, 2\}$, we have
\begin{eqnarray*}
\sum_{r=1}^{k-1} \binom{r}{s}\binom{k-r}{t} \binom{n/2}{r}\binom{n/2}{k-r} &=& 
\binom{n/2}{s}\binom{n/2}{t} \binom{n-s-t}{k-s-t} \\
&=& \binom{n/2}{s}\binom{n/2}{t} \binom{n}{k}\binom{k}{s+t}\binom{n}{s+t}^{-1}.
\end{eqnarray*}
Hence, 
\begin{eqnarray*}
\sum_{r=1}^{k-1} r(k-r) \binom{n/2}{r}\binom{n/2}{k-r}
&=& \binom{n}{k} \frac{\binom{k}{2}\binom{n/2}{1}^2}{\binom{n}{2}} \\
&=& \binom{n}{k} \frac{k(k-1)n^2}{4n(n-1)} = \frac{k(k-1)}{4} \binom{n}{k} + O(n^{k-1}).
\end{eqnarray*}
and
$$
\frac{1}{n} \sum_{r=1}^{k-1} r^2(k-r)^2 \binom{n/2}{r}\binom{n/2}{k-r} = O(n^{k-1}). 
$$
Hence, 
$$
c_1 = \frac{1}{n} \sigma^T \Sigma\sigma = (k-1)\beta \log n + O\left(\frac{\log n}{n}\right).
$$
On the other hand, 
\begin{eqnarray*}
tr(Y_e) &=& tr\left(\left((k-2r)\diag(\sigma_e) - \sigma_e\sigma_e^T\right)^2\Pi\right) \\
&=& tr\left(\left((k-2r)\diag(\sigma_e) - \sigma_e\sigma_e^T\right)^2\right)
-\frac{1}{n}\left\|\left((k-2r)\diag(\sigma_e) - \sigma_e\sigma_e^T\right)\sigma\right\|_2^2 \\
&=& \left((k-2)(k-2r)^2+k^2\right)-\frac{1}{n}\|(k-2r)\one_e-k\sigma_e\|_2^2 \\
&=& \left((k-2)(k-2r)^2+k^2\right)-\frac{1}{n}(k^3-k(k-2r)^2) \\
&=& (k^3-k^2) - 4(k-2+\frac{k}{n})r(k-r),
\end{eqnarray*}
so we have
\begin{eqnarray*}
tr(\Sigma) &=& (k^3-k^2)\left(q\binom{n}{k}+2(p-q)\binom{n/2}{k}\right) - 4q\left(k-2+\frac{k}{n}\right) \cdot \frac{n^2}{4} \binom{n-2}{k-2} \\
&=& \left[(k^2-k)\left(\beta+\frac{\alpha-\beta}{2^{k-1}}\right)-(k-1)(k-2)\beta\right]n\log n + O(\log n).
\end{eqnarray*}
and hence
\begin{eqnarray*}
c_2 &=& \frac{1}{n-2}\left(tr(\Sigma)-\frac{1}{n} \sigma^T \Sigma \sigma\right) \\
&=& 2(k-1)\left(\frac{k}{2^{k}} \alpha+\left(1-\frac{k}{2^k}\right)\beta\right)\log n + O\left(\frac{\log n}{n}\right).
\end{eqnarray*}

\end{document}